\newcommand{\bA}{\boldsymbol A}
\newcommand{\bSigma}{\boldsymbol \Sigma}
\newcommand{\bD}{\boldsymbol D}
\newcommand{\bZ}{\boldsymbol Z}
\newcommand{\ba}{\boldsymbol a}
\newcommand{\bc}{\boldsymbol c}
\newcommand{\br}{\boldsymbol r}
\newcommand{\bu}{\boldsymbol u}
\title[Degree Sequence of Random Permutation Graphs]{Degree Sequence of Random Permutation Graphs}
\author{Bhaswar B. Bhattacharya}
\address{Department of Statistics, Stanford University, California, USA, 
{\tt bhaswar@stanford.edu}}
\author{Sumit Mukherjee}
\address{Department of Statistics, Columbia University, New York, USA, {\tt sm3949@columbia.edu}}
\begin{document}

\spacing{1.125}

\begin{abstract}
In this paper we study the degree sequence of the permutation graph $G_{\pi_n}$ associated with a sequence $\pi_n\in S_n$ of random permutations.  Joint limiting distributions of the degrees are established using results from graph and permutation limit theories. In particular, for the uniform random permutation, the joint distribution of the degrees of the vertices labelled $\ceil{nr_1}, \ceil{nr_2}, \ldots, \ceil{nr_s}$ converges (after scaling by $n$) to independent random variables $D_1, D_2, \ldots, D_s$, where $D_i\sim \dU(r_i, 1-r_i)$, for $r_i\in [0,1]$ and $i\in \{1, 2, \ldots, s\}$. Moreover, the degree of the mid-vertex (the vertex labelled $n/2$) has a central limit theorem, and  the minimum degree converges to a Rayleigh distribution after appropriate scalings.  Finally, the limiting degree distribution of the permutation graph associated with a Mallows random permutation is determined, and interesting phase transitions are observed. Our results extend to other exponential measures on permutations.
\end{abstract}

\subjclass[2010]{05A05, 05C17, 60C05,  60F05}
\keywords{Combinatorial probability, Graph limit, Limit theorems,  Mallow's model, Permutation Limit}

\maketitle

\section{Introduction}

Let $[n]:=\{1,2,\cdots, n\}$, and $S_n$ denote the set of all permutations of $[n]$. For any permutation $\pi_n\in S_n$ associate a {\it permutation graph} $G_{\pi_n}=(V(G_{\pi_n}), E(G_{\pi_n}))$, where $V(G_{\pi_n})=[n]$ and there exists an edge $(i, j)$ if and only if $(i-j)(\pi_n(i)-\pi_n(j))<0$, that is, whenever $i, j$ determines an {\it inversion} in the permutation $\pi_n$. The permutation graphs associated with $\pi_n$ and $\pi_n^{-1}$ are isomorphic, and the adjacency matrix $\bD_{\pi_n}$ associated with the permutation graph $G_{\pi_n}$ is 
$$
\bD_{\pi_n}(i,j):=
\left\{
\begin{array}{cc}
1  &  \text{ if }(i-j)(\pi_n(i)-\pi_n(j))<0, \\
0  &  \text{ otherwise. }
\end{array}
\right.$$

For a permutation $\pi_n\in S_n$, define $\ell_n(a)$ to be the line segment with endpoints $(a,0)$ and $(\pi_n(a),1)$, for $a\in [n]$. The endpoints of these segments lie on the two parallel lines $y = 0$ and $y = 1$, and two segments have a non-empty intersection if and only if they correspond to an inversion in the permutation. Thus, the permutation graph $G_{\pi_n}$ is the intersection graph of the segments $\{\ell_n(a)\}_{a=1}^n$. Moreover, for every two parallel lines, and every finite set of line segments with endpoints on both lines, the intersection graph of the segments is a permutation graph in the case that the segment endpoints are all distinct. The associated permutation graph can be constructed as follows: arbitrarily number the segments on one of the two lines in consecutive order, and read off these numbers in the order that the segment endpoints appear on the other line.

Permutation graphs were introduced by Pnueli et al. \cite{even_identify} and Even et al. \cite{even}, and they showed that a graph is a permutation graph if and only if the graph and its complement are transitively orientable, that is, an assignment of directions to the edges of the graph such that whenever there exist directed edges $(x,y)$ and $(y,z)$, there must exist an edge $(x,z)$. They also gave a polynomial time procedure to find a transitive orientation when it is possible. Testing whether a given graph is a permutation graph can be done in linear time \cite{lineartime}. Permutation graphs are perfect graphs and, as a result, several NP-complete problems may be solved efficiently for permutation graphs. For this reason, permutation graphs have found applications in many applied problems, like channel routing, scheduling, and memory allocation \cite{golumbic}. Permutation graphs also have applications in comparative genomics and bioinformatics \cite{bafna}.
 
Permutation statistics of a randomly chosen permutation, like its cycle structure, inversions, descents, and fixed points, are widely studied and are of fundamental importance in the analysis of algorithms. For connections of random permutation statistics with determinental point processes refer to Borodin et al. \cite{borodin_diaconis_fulman}.  Recently, Acan and Pittel \cite{permutation_connected_components} studied when $\sigma (n,m)$, a permutation chosen uniformly at random among all permutations of $[n]$ with $m$ inversions, is indecomposable (refer to \cite{bona_permutation_book,analytic_combinatorics} and the references therein for more on indecomposable permutations). The probability $p(n, m)$ that $\sigma (n,m)$ is indecomposable, is same as the probability that the random permutation graph $G_{\sigma(n, m)}$ is connected. Acan and Pittel \cite{permutation_connected_components} showed that $p(n, m)$ has a phase transition from 0 to 1 at $m_n:=(6/\pi^2)n\log n$. They also studied the behavior of $G_{\sigma(n, m)}$ at the threshold.

\begin{figure*}[h]
\centering
\begin{minipage}[c]{1.0\textwidth}
\centering
\includegraphics[width=3.8in]
    {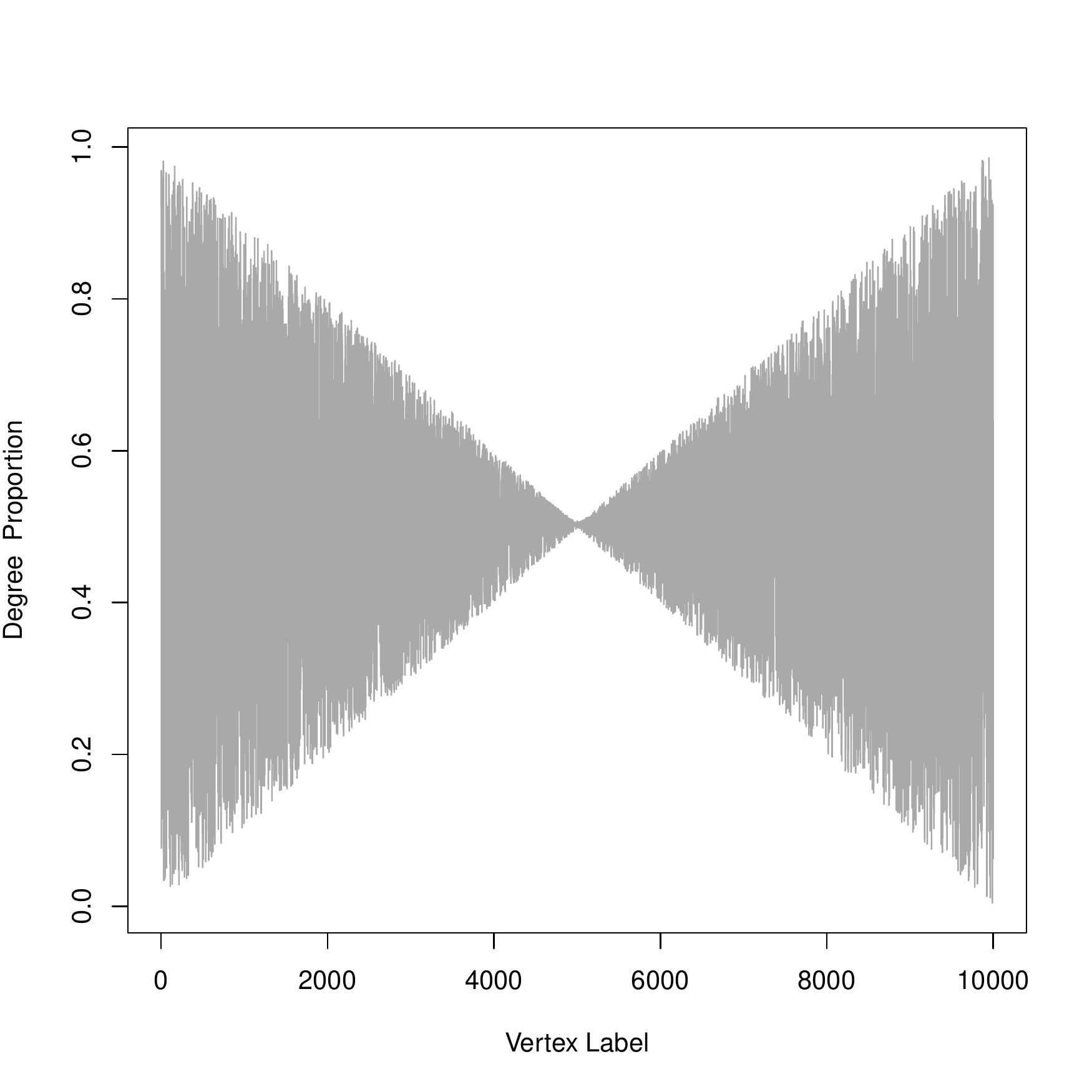}\\
\end{minipage}
\caption{\small{Degree distribution of labelled vertices of the permutation graph associated with a random permutation of length $n=10^5$.}}
\label{fig:degree_sequence}
\end{figure*}

One of the most important graph statistic is its degree sequence. Large graphs, like social network graphs, are often studied through  their degree sequence \cite{newman1,newman2}. For this reason, the degree sequence of random graphs has been widely studied over the years. Refer to Chatterjee et al. \cite{chatterjee_degree} and the references therein for results about random graphs with a given degree sequence. Degree distribution of a  random interval graph was studied by Scheinerman \cite{scheinerman}.  Diaconis et al. \cite{thresholdgraph,intervalgraph} studied the limits of threshold graphs and interval graphs, and initiated the limit theory of intersection graphs. They also considered the degree distribution of threshold graphs \cite{thresholdgraph}. 

In this paper, using results from the emerging literature on graph and permutation limit theories, we study the degree sequence of permutation graphs associated with a sequence $\pi_n\in S_n$ of random permutations. Given a permutation graph $G_{\pi_n}$ denote by $d_n(i):=\sum_{j=1}^n\bD_{\pi_n}(i,j)$ the degree of the vertex labelled $i\in [n]$. The quantity $d_n(i)/n$ will be referred to as the {\it degree proportion} of the vertex $i \in [n]$. Given a sequence $\pi_n\in S_n$ of random permutations, the {\it permutation process} is the stochastic process  on $(0,1]$ defined by \footnote{Throughout the paper, $\pi_n$ will be used interchangeably to denote both the permutation and the permutation process depending on the context. In particular, for $a\in [n]$ $\pi_n(a)$ will denote the image of $a$ under the permutation $\pi_n$. On the other hand, for $t \in [0, 1]$ $\pi_n(t)=\pi_n(\lceil nt\rceil)/n$ will denote the permutation process evaluated at $t$. Similarly, $d_n$ will be used to denote both the degree of a vertex and the degree process.}
\begin{equation}
\pi_n(t):=\frac{\pi_n(\ceil{nt})}{n}.
\label{permprocess}
\end{equation}
 It is shown in Theorem \ref{thm:degreejoint} that convergence in distribution of the permutation process implies joint convergence in distribution of permutations $\{\pi_n\}_{n \in \N}$ in the sense of Hoppen et al. \cite{permutation_limits} and the  {\it degree process} 
\begin{equation}
d_n(t):=\frac{d_n(\ceil{nt})}{n}. 
\label{degprocess}
\end{equation} 
The limiting degree process can be described in terms of the limit of the permutation process. As a consequence, we derive the degree distribution of the uniformly random permutation graph, that is, the permutation graph associated with a permutation chosen uniformly at random from $S_n$ (Corollary \ref{cor:degreejoint}). Figure \ref{fig:degree_sequence} shows the degree proportion of the vertices in the permutation graph associated with a uniformly random permutation of length $n=10^5$. It follows from Corollary \ref{cor:degreejoint} that the degree proportion of the mid-vertex, that is, the vertex labelled $\ceil{n/2}$, converges to 1/2 in probability (This can also be seen from the fan-like structure in Figure \ref{fig:degree_sequence} around the point 1/2). We show that $d_n(\ceil{n/2})/n$ has a CLT around 1/2 after an appropriate rescaling (Theorem \ref{th:degreenormal}). We also show that the scaled minimum degree in a uniformly random permutation graph converges to a Rayleigh distribution with parameter $\frac{1}{\sqrt 2}$ (Theorem \ref{min_deg}). 

Moreover, we give sufficient conditions for verifying the convergence of the permutation process. These conditions can be easily verified for many non-uniform (exponential) measures on permutations. These conditions  together with the recent work of Starr \cite{starr} can be used to explicitly determine the limiting distribution of the degree process for a Mallows random permutation, for all $\beta \in \R$ (Theorem \ref{th:mallowsdegreejoint}). For each $a\in (0, 1]$, the limiting density of $d_n(\ceil{na})/n$ has a interesting phase transition depending on the value of $\beta$: there exists a critical value $\beta_c(a)$ such that for $\beta \in [0, \beta_c(a)]$ the limiting density is a continuous function supported on $[a, 1-a]$. However, for  $\beta> \beta_c(a)$ the density breaks into two piecewise continuous parts. If $\beta=1/T$ denotes the inverse temperature, then this is the statistical physics phenomenon of {\it replica symmetry breaking}  in the low temperature regime.

The rest of the paper is organized as follows: Section \ref{sec:graph_permutation_limit} contains the basics of graph and permutation limit theories and their connections. Section \ref{sec:summary} gives the summary of our main results. The limiting distribution of the degree processes, relating it to the convergence of the permutation process, is in Section \ref{sec:main}. Section \ref{sec:uniform} covers uniformly random permutation graphs, including the CLT for the mid-vertex. Conditions to verify the convergence of the permutation process are given in Section \ref{sec:equic}. These conditions are also verified for a general class of exponential measures on permutations. The application of these results to the Mallows random permutation is given in Section \ref{sec:mallows}.  Section \ref{sec:min_deg} contains the asymptotics for the minimum degree of a uniformly random permutation graph.

\section{Graph and Permutation Limit Theories}
\label{sec:graph_permutation_limit}

\subsection{Graph Limit Theory}

The theory of graph limits was developed by Lovasz and coauthors \cite{graph_limits_I,graph_limits_II,lovasz_book}, and has received phenomenal attention over the last few years. Graph limit theory connects various topics such as graph homomorphisms, Szemer\'edi's regularity lemma, quasirandom graphs, graph testing and extremal graph theory, and has even found applications in statistics and related areas. For a detailed exposition of the theory of graph limits refer to Lovasz \cite{lovasz_book}. 

Here we mention the basic definitions about convergence of graph sequences. If $F$ and $G$ are two graphs, then define the homomorphism density of $F$ into $G$ by
$$t(F,G) :=\frac{|\hom(F,G)|}{|V (G)|^{|V (F)|},}$$
where  $|\hom(F,G)|$ denotes the number of homomorphisms of $F$ into $G$. In fact, $t(F, G)$ is the probability that a random mapping $\phi: V (F) \rightarrow V (G)$ defines a graph homomorphism. The basic definition is that a sequence $G_n$ of graphs converges if $t(F, G_n)$ converges for every graph $F$.

There is a natural limit object in the form of a function $W \in \sW$, where $\sW$ is the space of all measurable functions from $[0, 1]^2$ into $[0, 1]$ that satisfy $W(x, y) = W(y,x)$ for all $x, y$. Conversely, every such function arises as the limit of an appropriate graph sequence. This limit object determines all the limits of subgraph densities:
if $H$ is a simple graph with $V (H)= \{1, 2, \ldots, |V(H)|\}$, let
$$t(H,W) =\int_{[0,1]^{|V(H)|}}\prod_{(i,j)\in E(H)}W(x_i,x_j) dx_1dx_2\cdots dx_{|V(H)|}.$$
A sequence of graphs $\{G_n\}_{n\geq 1}$ is said to converge to $W$ if for every finite simple graph $H$, 
\begin{equation}
\lim_{n\rightarrow \infty}t(H, G_n) = t(H, W).
\label{eq:graph_limit}
\end{equation}
These limit objects, that is, elements of $\sW$, are called {\it graph limits} or {\it graphons}. A finite simple graph $G$ on $[n]$ can also be represented as a graphon in a natural way: Define $f^G(x, y) =\boldsymbol 1\{(\ceil{nx}, \ceil{ny})\in E(G)\}$, that is, partition $[0, 1]^2$ into $n^2$ squares of side length $1/n$, and define $f^G(x, y)=1$ in the $(i, j)$-th square if $(i, j)\in E(G)$ and 0 otherwise. Observe that $t(H, f^{G}) = t(H,G)$ for every simple
graph $H$ and therefore the constant sequence $G$ converges to the graph limit $f^G$. 

The notion of convergence in terms of subgraph densities outlined above can be
captured by the cut-distance defined as:
$$d_\square(f,g):=\sup_{S,T\subset [0,1]}\left|\int_{S\times T}[f(x,y)-g(x,y)]dxdy\right|,$$
for $f, g\in\sW$. Define an equivalence relation on $\sW$ as follows: $f\sim g$ whenever $f(x, y)=g_\sigma(x, y):=g(\sigma x, \sigma y)$, for some measure preserving bijection $\sigma:[0,1]\mapsto [0,1] $. Denote by $\widetilde g$ the closure of the orbit $g_\sigma$ in $(\sW, d_\square)$. The space $\{\widetilde g:g\in \sW\}$ of closed equivalence classes  is denoted by $\widetilde{\sW}$ and is associated with the following natural metric:
$$\delta_\square(\widetilde{f},\widetilde{g}):=\inf_{\sigma}d_\square(f,g_\sigma)=\inf_{\sigma}d_\square(f_\sigma,g)=\inf_{\sigma_1,\sigma_2}d(f_{\sigma_1},g_{\sigma_2}).$$
The space $(\widetilde{\sW},\delta_\square)$ is compact \cite{graph_limits_I}, and the  metric $\delta_\square$ is commonly referred to as the cut-metric. 

The main result in graph limit theory is that a sequence of graphs $\{G_n\}_{n\geq 1}$
converges to a limit $W\in \sW$ in the sense defined in (\ref{eq:graph_limit}) if and only if
$\delta_\square(\widetilde f^{G_n}, \widetilde W)\rightarrow 0$ \cite[Theorem 3.8]{graph_limits_I}. More generally, a sequence $\{\widetilde W_n\}_{n\geq 1}$ converges to $\widetilde W\in \widetilde \sW$ if and only if $\delta_\square(\widetilde W_n, \widetilde W)\rightarrow 0$.

\subsection{Permutation Limits}

Analogous to the theory of graph limits  Hoppen et al.  \cite{permutation_testing,permutation_limits} developed the theory of permutation limits.  For $\pi_n\in S_n$ and $\sigma \in S_a$,  $\sigma$ is a sub-permutation of $\pi_n$ if there exists $1 \leq  i_1 <\cdots < i_a\leq n$ such that such that $\sigma(x) < \sigma(y)$ if and only if $\pi_n(i_x)<\pi_n(i_y)$. For example, 132 is a sub-permutation of 7126354 induced by $i_1=3, i_2=4, i_3=6$. Sub-permutations are often referred to as {\it patterns} and their combinatorial properties are widely studied (refer to Bona \cite{bona_pattern_normality}, the recent paper of Janson et al. \cite{janson_multiple_pattern} and the references therein). We follow the setup of Hoppen et al.  \cite{permutation_limits} (see also K\'ral' and Pikhurko \cite{kral_pikhurko} and Glebov et al. \cite{permutons_graphons}), and refer to them as sub-permutations because of its similarity to the notion of sub-graphs, the counterpart object in graph limit theory. 

The density of a permutation $\sigma\in S_a$ in a permutation $\pi_n\in S_n$ is 
$$d(\sigma, \pi_n)=
\left\{
\begin{array}{ccc}
{{n}\choose{a}}^{-1}\#\{\sigma \in S_a: \sigma \text{ is sub-permutation of } \pi_n\}  &  \text{ if } & a \leq n \\ 
0  &     \text{ if } & a > n.
\end{array}
\right.
$$
An infinite sequence $(\pi_n)_{n\in \N}$ of permutations  is convergent as $n \rightarrow \infty$ if $d(\sigma, \pi_n)$ converges for every permutation $\sigma$. 

Every convergent sequence of permutations can be associated with an analytic object, referred to as  {\it permuton}, which is a probability measure $\nu$ on $([0, 1]^2, \sB([0, 1]^2)$ with uniform marginals, where $\sB([0, 1]^2)$ is the sigma-algebra of the Borel sets of $[0, 1]^2$. For an integer $n$, sample $n$ independent points $(x_1, y_1), (x_2, y_2), \ldots, (x_n, y_n)$ in $[0, 1]^2$ randomly from the measure $\nu$. Let $\sigma_x$ and $\sigma_y$ be the permutations of order $n$ such that $x_{\sigma_x(1)}< x_{\sigma_x(2)}<\cdots x_{\sigma_x(n)}$ and $y_{\sigma_y(1)}< y_{\sigma_y(2)}<\cdots y_{\sigma_y(n)}$, respectively. Define $\sigma_y^{-1}\circ\sigma_x$ as the $\nu$-{\it random permutation} of order $n$. For a permuton $\nu$ and a permutation $\sigma\in S_k$, define $d(\sigma, \nu)$ as the probability that a $\nu$-random permutation of order $k$ is $\sigma$. 
The main result in permutation limit theory \cite{permutation_limits} is that for every convergent sequence $(\pi_n)_{n \in \N}$ of permutations, there exists a unique permuton $\nu$ such that 
\begin{equation}
d(\sigma, \nu) = \lim_{n\rightarrow \infty} d(\sigma, \pi_n),
\label{eq:permutation_limit}
\end{equation}
for every permutation $\sigma$. The {\it permuton} $\nu$ is defined as the limit of the sequence $(\pi_n)_{n \in \N}$. On the other hand, a sequence of $\nu$-random permutations $(\pi_n)_{n \in \N}$ converges to $\nu$ in the limit.

As in graph limit theory, the above notion of permutation convergence can be metrized by 
embedding all permutations to the space probability measures $\cM$ on $[0, 1]^2$ with uniform marginals, equipped with any metric which induces the topology of weak convergence. To this end, define for any $\pi_n\in S_n$, a probability measure $\nu_{\pi_n}\in \cM$
as: 
\begin{equation}
d\nu_{\pi_n}:=f_{\pi_n}(x,y) dxdy, \text{ where } f_{\pi_n}(x,y)=n\pmb 1\{(x,y):\pi_n(\lfloor n x\rfloor)=\lfloor ny\rfloor\},
\label{eq:permuton}
\end{equation}
is the density of $\nu_{\pi_n}$ with respect to Lebesgue measure. Like in graph limit theory, $\nu_{\pi_n}$ has the following interpretation: Partition $[0, 1]^2$ into $n^2$ squares of side length $1/n$, and define $f_{\pi_n}(x, y)=n$ for all $(x, y)$ in the $(i, j)$-th square if $\pi_n(i)=j$ and 0 otherwise. The measure $\nu_{\pi_n}$ will be referred to as the {\it permuton associated with $\pi_n$}.


The space $\cM$ equipped with the topology of weak convergence of measures is compact. Moreover, the map $\pi\rightarrow \nu_{\pi}$ is 1-1 and so all finite permutations are contained in $\cM$. Analogous to the result in graph limit, Hoppen et al. \cite[Lemma 5.3]{permutation_limits} showed that a sequence of permutations $\pi_n$ convergences in the sense of (\ref{eq:permutation_limit}) if and only if the corresponding sequences of measures $\nu_{\pi_n}$ converges weakly to $\nu\in \cM$. More generally, any sequence of measures $\{\nu_n\}$ in $\cM$ converges weakly to a measure $\nu$ if and only if $d(\sigma,\nu_n)\rightarrow d(\sigma,\nu)$ for all permutations $\sigma$.

\subsection{Limit of Permutation Graphs} 

Diaconis et al. \cite{thresholdgraph,intervalgraph} studied the limits of threshold graphs and interval graphs. They also mentioned that their methods can be used for other classes of intersection graphs, which include permutation graphs \cite{intervalgraph}. However, instead of describing the limit object as symmetric function from $[0, 1]^2$ to $[0, 1]$, they 
represented the graph limit as a measure on $[0, 1]^2$ with uniform marginals. 

To this end, for every measure $\nu$ on $[0, 1]^2$ with uniform marginals, they defined a unique graph limit object $\widetilde{W}_\nu\in \tilde\sW$ by specifying $t(F, W_\nu)$ for all graphs $F$ as follows:
\begin{eqnarray}
t(F, W_\nu)&:=&\E\prod_{(i, j)\in E(F)}K(X_i, X_j)\nonumber\\
&=&\int_{[0,1]^{2|V(F)|}}\prod_{(i,j)\in E(F)}K(x_i,x_j) d\nu(x_1)d\nu(x_2)\cdots d\nu(x_{|V(F)|}),
\label{eq:permutation_graph_limit}
\end{eqnarray}
where $X_1, X_2, \ldots, X_n$ are independent and identically distributed from $\nu$ and $K: [0, 1]^2\times [0, 1]^2\rightarrow [0, 1]$ given by $K((a_1, b_1), (a_2, b_2))=\boldsymbol 1\{(a_1-a_2)(b_1-b_2)<0\}$.

In the graph limit literature it is usually convenient to represent a graphon by a functional 
$W : [0, 1]^2\rightarrow [0, 1]$. 
 However, Diaconis et al. \cite{intervalgraph} described permutation graph limits 
 in terms of a permuton $\nu$. In that case every probability measure $\nu$ on $[0, 1]^2$ defines a graph limit $W_\nu$ by (\ref{eq:permutation_graph_limit}). Diaconis et al. \cite{intervalgraph} showed that every permutation graph limit may be represented in terms of a measure $\nu$ on $([0, 1]^2, \sB([0, 1]^2))$ via \eqref{eq:permutation_graph_limit} with the two marginal distributions of $\nu$  both being uniform on $[0,1]$.

Glebov et al. \cite{permutons_graphons} pointed out that if $(\pi_n)_{n\in\N}$ is a convergent sequence of permutations (in the sense of Hoppen et al. \cite{permutation_limits} described above), then the sequence of permutation graphs $(G_{\pi_n})_{n\in \N}$ is also convergent. 
 Therefore, each permuton $\nu$  can be associated with an equivalence class $\widetilde{W_\nu}\in \widetilde \sW$. However, the map $\nu\rightarrow \widetilde{W_\nu}$ is not one-to-one, as can be seen by the following example:  Suppose $\nu\in\cM$ be a permuton which is not exchangeable, i.e. if $(X,Y)$ has distribution $\nu$ then $(Y,X)$ has a distribution $\mu\neq\nu$. If $\pi_n$ be a sequence converging to $\nu$, then $\pi_n^{-1}$ converges to $\mu$.  Since the graphs $G_{\pi_n}$ and $G_{\pi_n^{-1}}$ are isomorphic, $\widetilde W_\nu$ are $\widetilde W_\mu$ are identical, but by choice $\mu$ and $\nu$ are not same.
 
For other interesting connections between graph and permutation limit theories refer to recent papers of K\'ral' and Pikhurko \cite{kral_pikhurko}, Glebov et al. \cite{permutons_graphons,permutation_graph_testing}.

\subsection{Empirical Degree Distribution of Random Permutation Graphs}
Given a sequence of permutation graphs $(G_{\pi_n})_{n\geq 1}$, the empirical degree distribution is $$\kappa(G_{\pi_n}):=\frac{1}{n}\sum_{i=1}^n\delta_{\frac{d_n(i)}{n}}.$$ 
Diaconis et al. \cite{intervalgraph} pointed out that if $G_{\pi_n}\rightarrow W_\nu$, then $\kappa(G_{\pi_n})$ converges weakly to the distribution of the random variable
$$W_1(\vec X):=\int_{[0,1]^2}K(\vec X, \vec z) d\nu(\vec z),$$
where $\vec X=(X_1, Y_1)$ is a random element in $[0, 1]^2$ with distribution $\nu$, and $K((x_1, y_1), (x_2, y_2))=\boldsymbol 1\{(x_1-x_2)(y_1-y_2)<0\}$. 

The degree distribution of a  permutation graph is an immediate consequence of this discussion as explained below.

\begin{ppn}Let $(\pi_n)_{n\in \N}$ be a sequence of permutations converging to a permuton $\nu$ in $([0, 1]^2, \sB([0, 1]^2))$, and $G_{\pi_n}$ be the associated permutation graph. Then the empirical degree distribution
\begin{equation}\kappa(G_{\pi_n}):=\frac{1}{n}\sum_{i=1}^n\delta_{\frac{d_n(i)}{n}}\dto X_1+Y_1-2F_\nu(X_1, Y_1),
\label{eq:degree_nu}
\end{equation}
where $(X_1, Y_1)\sim \nu$ and $F_\nu$ is the distribution function of $\nu$.
\label{ppn:degree_nu}
\end{ppn}

\begin{proof}Let $K: [0, 1]^2\times [0, 1]^2\mapsto [0, 1]$ be $K((x_1, y_1), (x_2, y_2))=\boldsymbol 1\{(x_1-x_2)(y_1-y_2)<0\}$. From the discussion before the proposition, $\kappa(G_{\pi_n})$ converges weakly to the law of
$$W_1(X_1, Y_1)=\int_{[0,1]^2}\boldsymbol 1\{(X_1-x_2)(Y_1-y_2)<0\}d\nu(x_2, y_2)= \nu\left([0, X_1]\times [Y_1, 1]\right)+\nu\left([0, Y_1]\times [X_1, 1]\right).$$
The  limiting degree distribution in (\ref{eq:degree_nu}) now follows from the fact that $\nu$ has uniform marginals.
\end{proof}

\section{Summary of the Results}
\label{sec:summary}

Let $\pi_n\in S_n$ be any permutation. Define the random variable $a_n(i):=\sum_{j=1}^{i-1}\bD_{\pi_n}(i,j)$, for $i \in [n]$. Note that $a_n(i)\in [0, i-1]$ represents the number of edges in $G_{\pi_n}$ connecting the vertex $i$ to vertices $j\in [i-1]$. The quantity $a_n(i)$ will be referred to as the {\it back-degree} of the vertex $i$. Similarly, one can define the {\it front-degree} of the vertex $i$ as $b_n(i):=\sum_{j=i+1}^n\bD_{\pi_n}(i,j)$. Note that $b_n(i)\in [0,  n-i]$ and $d_n(i)=a_n(i)+b_n(i)$, where $d_n(i)$ is the degree of the vertex $i$. We are interested in the convergence of the {\it degree process} 
\begin{equation}
d_n(t)=d_n(\ceil{nt})/n=a_n(t)+b_n(t),
\label{eq:deg}
\end{equation}
where $a_n(t)=a_n(\ceil{nt})/n$ and $b_n(t)=b_n(\ceil{nt})/n$, are the {\it back-degree process} and  the {\it front-degree process}, respectively. 

Since $[0,1]^{(0,1]}$ is compact by Tychonoff's theorem, a sequence of stochastic processes $\{Z_n(t)\}_{t\in (0,1]}$ supported on $[0,1]$ converges in law to a process $\{Z(t)\}_{t\in (0,1]}$ (denoted by $Z_n(\cdot)\stackrel{w}{\Rightarrow}Z(\cdot)$), if and only if the finite dimensional distributions of $Z_n(\cdot)$ converge to $Z(\cdot)$. A sequence of random measures $\mu_n\in \cM$ converges {\it weakly in distribution} to a random measure $\mu\in \cM$ (denoted by $\mu_n\dto \mu$), if 
\begin{equation}
\mu_n(f):=\int_{[0, 1]^2}fd\mu_n\dto\int_{[0, 1]^2}fd\mu:=\mu(f),
\end{equation}
for all continuous functions $f: [0, 1]^2\rightarrow \R$. Finally, we denote by $\sL(X)$, the distribution of the random variable $X$.

Unlike the empirical degree distribution, the convergence of a permuton sequence $(\nu_{\pi_n})_{n\geq 1}$ (defined in (\ref{eq:permuton})), might not imply convergence of the degree process. An easy way to see this is to consider $\pi_n$  a sequence of deterministic permutations converging to $\lambda([0, 1]^2)$, the Lebesgue measure on the unit square, and $\sigma_n$ a uniformly random permutation on $S_n$. Define a new sequence of permutations $\kappa_n$ which is  $\pi_n$ if $n$ is odd and $\sigma_n$ if $n$ is even. Clearly, $\nu_{\kappa_n}$ converges to $\lambda([0, 1]^2)$. However, for $t\in [0, 1]$, the degree process $d_n(t)$ converges in distribution to the random variable  $\dU[t, 1-t]$ along $n$ even, and is a deterministic sequence of real numbers for $n$ odd.

The above discussion necessitates extra assumptions on the random permutations to ensure the convergence of the degree process. To this end, for $\pi_n\in S_n$ and $t\in (0,1]$, define 
\begin{equation}
\pi_n(t):=\pi_n(\lceil nt\rceil)/n.
\label{eq:perm}
\end{equation}
By this rescaling,  $\pi_n:(0,1]\mapsto [0,1]$ is a stochastic process, hereafter referred to as the {\it permutation process}. The following theorem shows that the convergence of the permutation process implies that both the corresponding permutons and the degree process converge. To best of our knowledge, this connection between the convergence of the permutation process and the permutons is new, and might be of independent interest. In fact, under regularity conditions (discussed in Section \ref{sec:equic}) the two notions of convergence are equivalent. 

\begin{thm}
Let $\pi_n\in S_n$ be a sequence of random permutations such that  
\begin{eqnarray}\pi_n(\cdot)\stackrel{w}{\Rightarrow}Z(\cdot).
\label{eq:convprocessthm}
\end{eqnarray}
Then  there exists a (random) measure $\mu \in \cM$ such that the permuton $\nu_{\pi_n}\dto \mu$, and the degree process
\begin{equation}
d_n(\cdot)\stackrel{w}{\Rightarrow} D(\cdot),
\label{eq:deg_conv}
\end{equation} 
where $$D(t)=t+Z(t)-2F_\mu(t, Z(t)),$$ and $F_\mu$ is the distribution function of the measure $\mu$.
\label{thm:degreejoint}
\end{thm}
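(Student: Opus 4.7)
The plan is to prove both conclusions simultaneously by establishing joint weak convergence of $(\pi_n(\cdot), \nu_{\pi_n})$ to $(Z(\cdot), \mu)$, and then extracting the degree-process limit via an explicit pointwise identity together with the continuous mapping theorem.

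\textbf{Tightness and identification of $\mu$.} Since $\cM$ is compact under weak convergence and $\pi_n(\cdot)$ takes values in the compact product space $[0,1]^{(0,1]}$, the pair $(\pi_n(\cdot), \nu_{\pi_n})$ is automatically tight; pass to a subsequential joint weak limit $(Z(\cdot), \mu)$ whose first marginal agrees with the given $Z(\cdot)$. The crucial identity behind the characterization of $\mu$ is
$$\nu_{\pi_n}(f) = \int_0^1 f(s, \pi_n(s))\,ds + O(\omega_f(1/n)),$$
valid for every continuous $f:[0,1]^2 \to \R$ (with $\omega_f$ its modulus of continuity), which follows directly from \eqref{eq:permuton} and the step-function form of the permutation process in \eqref{eq:perm}. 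For any bounded continuous $\Phi:[0,1]^k \to \R$ and $t_1, \ldots, t_k \in (0,1]$, Fubini together with the assumed finite-dimensional convergence $\pi_n(\cdot) \stackrel{w}{\Rightarrow} Z(\cdot)$ and bounded convergence yield
$$\E[\mu(f)\,\Phi(Z(t_1), \ldots, Z(t_k))] = \int_0^1 \E[f(s, Z(s))\,\Phi(Z(t_1), \ldots, Z(t_k))]\,ds.$$
The same computation applied to higher products $\nu_{\pi_n}(f_1) \cdots \nu_{\pi_n}(f_r)$ determines every joint moment of $\mu(f_1),\dots,\mu(f_r)$ with finitely many $Z(t_i)$'s in terms of the law of $Z$ alone. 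Since such moments form a separating class on $\cM \times [0,1]^{(0,1]}$, the joint law of $(Z, \mu)$ is subsequence-independent, so the full sequence converges jointly and, in particular, $\nu_{\pi_n} \dto \mu$.

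\textbf{Degree-process identity and continuous mapping.} Splitting the neighbors of vertex $\lceil nt\rceil$ into back-degree and front-degree contributions and rewriting each count as a $\nu_{\pi_n}$-mass of a rectangle, the uniform marginals of $\nu_{\pi_n}$ yield
$$d_n(t) = t + \pi_n(t) - 2\,F_{\nu_{\pi_n}}(t, \pi_n(t)) + O(1/n).$$
Because $\mu$ has uniform marginals almost surely, $F_\mu$ is $1$-Lipschitz in each coordinate and hence continuous on $[0,1]^2$; a Polya-type argument then upgrades $\nu_{\pi_n} \dto \mu$ to uniform convergence of $F_{\nu_{\pi_n}}$ to $F_\mu$. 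Combined with the joint convergence from the previous step, the continuous mapping theorem gives $F_{\nu_{\pi_n}}(t, \pi_n(t)) \dto F_\mu(t, Z(t))$ jointly in any finite collection of $t$'s, and the stated formula for $D(t)$ follows, delivering the finite-dimensional convergence $d_n(\cdot) \stackrel{w}{\Rightarrow} D(\cdot)$.

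The delicate step is the identification of $\mu$: the limit $Z(\cdot)$ is specified only through its finite-dimensional marginals, so there is no a priori notion of $\int_0^1 f(s, Z(s))\,ds$ along sample paths of $Z$, and the apparently natural description $\mu(f) = \int_0^1 f(s, Z(s))\,ds$ may not make sense for an individual realization of $Z$. The moment identity displayed above circumvents this by pinning the joint law of $(\mu, Z)$ down without requiring any measurability or path regularity of $Z$, which is the main conceptual obstacle of the argument.
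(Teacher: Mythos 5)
Your proof is correct and follows essentially the same two-stage strategy as the paper's: first establish joint weak convergence of $(\nu_{\pi_n},\pi_n(\cdot))$ to $(\mu,Z(\cdot))$ via the moment identity $\nu_{\pi_n}(f)\approx\int_0^1 f(s,\pi_n(s))\,ds$ (the paper's Theorem 4.1), and then exploit the algebraic identity $d_n(t)=t+\pi_n(t)-2F_{\nu_{\pi_n}}(t,\pi_n(t))+o(1)$ together with continuity of $F_\mu$ (from the uniform marginals) to pass to the limit. The only cosmetic difference is that you invoke the continuous mapping theorem for the final step whereas the paper uses a Skorohod coupling plus the a.s.\ Kolmogorov--Smirnov convergence from Hoppen et al.\ --- these two routes are interchangeable here.
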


The above theorem, which is proved in Section \ref{sec:thmain}, will be used to determine the limiting degree process for various random permutations. Note that the limiting measure $\mu$ might be random. In this case, the finite dimensional distributions degree process can be dependent (see Example \ref{dependent}). However, for most of the examples considered in this paper the limiting measure in non-random, and the corresponding degree process has independent finite dimensional distributions. This is summarized in the following corollary and proved in Section \ref{sec:cormain}.

\begin{cor}
Suppose the permutation process $\pi_n(\cdot) \stackrel{\sD}\Rightarrow  Z(\cdot)$, and the finite dimensional marginals of $Z(\cdot)$ are independent. 

\begin{itemize}
\item Then $\nu_{\pi_n}$ converges to a non random measure $\mu\in \cM$  with law of $(X,Y)\sim\mu$ as follows: $$X\sim \dU[0,1], \text{ and } \sL(Y|X=x)\sim \sL(Z(x)).$$  

\item The finite dimensional marginals of the limiting degree process $D(\cdot)$ are independent.
\end{itemize}
\label{cor:independent}
\end{cor}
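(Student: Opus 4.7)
The plan is to invoke Theorem \ref{thm:degreejoint}: under $\pi_n(\cdot)\stackrel{w}{\Rightarrow}Z(\cdot)$ that theorem already furnishes $\nu_{\pi_n}\dto\mu$ for some (a priori random) $\mu\in\cM$ together with the representation $D(t)=t+Z(t)-2F_\mu(t,Z(t))$. Both bullets of the corollary then reduce to showing that $\mu$ is non-random with the claimed conditional structure. Once that is established, $F_\mu$ becomes a fixed function, $D(t)=g_t(Z(t))$ with $g_t(y):=t+y-2F_\mu(t,y)$ measurable, and independence of $(D(t_1),\ldots,D(t_s))$ is immediately inherited from the assumed independence of $(Z(t_1),\ldots,Z(t_s))$.

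To identify $\mu$ I would test it against an arbitrary continuous $f:[0,1]^2\to\R$. From the definition \eqref{eq:permuton} and uniform continuity of $f$ on the compact unit square,
\[
\int f\,d\nu_{\pi_n} \;=\; \frac{1}{n}\sum_{i=1}^n f\!\left(\tfrac{i}{n},\tfrac{\pi_n(i)}{n}\right)+o(1).
\]
The goal is to prove that this random Riemann sum converges in probability to the deterministic value $\int_0^1 \E f(t,Z(t))\,dt$. Combined with $\nu_{\pi_n}\dto\mu$, this forces $\int f\,d\mu = \int_0^1 \E f(t,Z(t))\,dt$ almost surely for every continuous $f$, identifying $\mu$ as the unique non-random measure with uniform $X$-marginal and $\sL(Y\mid X=x)=\sL(Z(x))$.

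The convergence in probability is obtained by a variance estimate. Partition $(0,1]$ into $m$ equal blocks $I_1,\ldots,I_m$ with centers $t_j=(j-\tfrac12)/m$; uniform continuity of $f$ lets one replace $f(i/n,\cdot)$ by $f(t_j,\cdot)$ for $i/n\in I_j$ at a total cost of $o_m(1)$ uniform in $n$. Within block $j$, the variance of $\tfrac{1}{n}\sum_{i/n\in I_j} f(t_j,\pi_n(i)/n)$ is controlled by the covariances $\mathrm{Cov}\bigl(f(t_j,\pi_n(i_1)/n), f(t_j,\pi_n(i_2)/n)\bigr)$ at distinct $i_1,i_2$. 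The hypothesis that the finite-dimensional marginals of $Z$ are independent ensures that for any fixed pair $s_1\ne s_2$, $(\pi_n(s_1),\pi_n(s_2))$ converges in law to an independent pair, so these covariances vanish pointwise. Summing over blocks, letting $n\to\infty$, and then $m\to\infty$ recovers the deterministic Riemann integral $\int_0^1 \E f(t,Z(t))\,dt$.

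The main technical obstacle is precisely this last step: the finite-dimensional convergence of $\pi_n(\cdot)$ is stated at fixed indices in $(0,1]$, whereas the variance sum runs over $n$-dependent pairs $(i_1,i_2)$, so some uniform control over the covariances is required. The cleanest route is a direct second-moment computation showing $\E\bigl[\bigl(\int f\,d\nu_{\pi_n}\bigr)^2\bigr]$ converges to $\bigl(\int_0^1 \E f(t,Z(t))\,dt\bigr)^2$, which amounts to establishing joint convergence of $\nu_{\pi_n}\otimes\nu_{\pi_n}$ to a product limit under the independence hypothesis on $Z$; this dispenses with any need for continuity of $t\mapsto\sL(Z(t))$.
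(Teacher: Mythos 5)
Your ``cleanest route'' via the second-moment computation is exactly the paper's argument: the paper computes $\lim_n \E\,\tilde\nu_{\pi_n}(f)^a$ for all $a\ge1$ (using the DCT exchange already carried out in the proof of Theorem~\ref{thm:cond}, equations \eqref{eq:proI}--\eqref{eq:proII}), observes that independence of the finite-dimensional marginals of $Z$ makes the answer factor into $\bigl(\int_0^1\E f(x,Z(x))\,dx\bigr)^a$, and concludes $\tilde\nu_{\pi_n}(f)\pto\int_0^1\E f(x,Z(x))\,dx$; your final line on inheriting independence of $D(\cdot)$ from $Z(\cdot)$ once $F_\mu$ is deterministic is also the paper's. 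The blocking/partition digression is unnecessary and (as you note) technically shakier than the moment computation, which handles the $n$-dependent indices by first passing to a Riemann integral in continuous variables and then invoking dominated convergence at each fixed $(x_1,\ldots,x_a)$.
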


The above results imply that to determine the convergence of the degree process, it suffices to verify the convergence of the permutation process. This requires the permutation process to have some regularity, which is formalized in Section \ref{sec:equic}. The regularity conditions ensure that the finite dimensional distributions of the permutation process are ``equicontinuous",  which can be verified easily for exponential models on permutation, like the Mallow's model \cite{clmallows} and Spearman's rank correlation models \cite{diaconis_group}.

\subsection{Uniform Random Permutation}

A {\it uniformly random permutation graph} $G_{\pi_n}$ is the permutation graph associated with a uniformly random permutation $\pi_n\in S_n$.  The degree distribution for a uniformly random permutation graph can derived easily from Theorem \ref{thm:degreejoint}. To this end, denote by $[a, b]$, for $a, b \in \R$, the interval $[a\wedge b, a\vee b]$, where $a\wedge b:=\min\{a, b\}$ and $a\vee b:=\max\{a, b\}$. Moreover, $\dU(a, b)$, for $a, b\in [0, 1]$,  denotes the uniform distribution over $[a, b]$.

\begin{cor}Let $\pi_n\in S_n$ be a uniform random permutation and $d_n(1), d_n(2), \ldots d_n(n)$ be the degree sequence of the associated permutation graph $G_{\pi_n}$. Then for indices $0\leq  r_1<r_2< \cdots <r_s\leq 1$,
$$\left(\frac{d_n(\ceil{nr_1})}{n}, \frac{d_n(\ceil{nr_2})}{n}, \ldots, \frac{d_n(\ceil{nr_s})}{n}\right) \stackrel{\sP}{\rightarrow}(D_1, D_2, \ldots, D_s),$$
where $D_1, D_2, \ldots, D_s$ are independent and $D_i\sim \dU(r_i, 1-r_i)$, for every $i \in [s]$.
\label{cor:degreejoint}
\end{cor}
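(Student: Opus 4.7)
The plan is to apply Theorem~\ref{thm:degreejoint} and Corollary~\ref{cor:independent} directly to a uniformly random $\pi_n\in S_n$. The key preliminary step is to identify the limit of the permutation process $\pi_n(t)=\pi_n(\lceil nt\rceil)/n$. For any fixed $s$ and any distinct $0\leq r_1<r_2<\cdots<r_s\leq 1$, once $n$ is large enough that the indices $\lceil nr_1\rceil,\ldots,\lceil nr_s\rceil$ are pairwise distinct, the tuple $(\pi_n(\lceil nr_1\rceil),\ldots,\pi_n(\lceil nr_s\rceil))$ is a uniform sample without replacement of size $s$ from $[n]$. A standard total variation estimate shows that after dividing by $n$, this tuple converges in distribution to $s$ i.i.d.\ $\dU[0,1]$ random variables. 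Hence the finite-dimensional distributions of $\pi_n(\cdot)$ converge to those of a process $Z(\cdot)$ whose finite-dimensional marginals are independent $\dU[0,1]$, which is exactly $\pi_n(\cdot)\stackrel{w}{\Rightarrow}Z(\cdot)$.

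Next, I would invoke Corollary~\ref{cor:independent}. Its hypothesis (independent finite-dimensional marginals of $Z(\cdot)$) is satisfied, so the permuton $\nu_{\pi_n}$ converges to a non-random $\mu\in\cM$ with $X\sim\dU[0,1]$ and $\sL(Y\mid X=x)=\sL(Z(x))=\dU[0,1]$. Thus $\mu$ is the Lebesgue measure on $[0,1]^2$ and $F_\mu(x,y)=xy$. Applying Theorem~\ref{thm:degreejoint} now yields that the degree process converges to
\[
D(t)=t+Z(t)-2F_\mu(t,Z(t))=t+Z(t)(1-2t),
\]
and the second bullet of Corollary~\ref{cor:independent} guarantees that the finite-dimensional marginals of $D(\cdot)$ are independent.

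It remains to identify the law of each marginal $D(r_i)$. Writing $U_i=Z(r_i)\sim\dU[0,1]$, we have $D(r_i)=r_i+U_i(1-2r_i)$. When $r_i\neq 1/2$ this is a non-degenerate affine map of $U_i$ taking the value $r_i$ at $U_i=0$ and $1-r_i$ at $U_i=1$, so $D(r_i)$ is uniform on the interval with endpoints $r_i$ and $1-r_i$, i.e.\ $\dU(r_i,1-r_i)$ in the convention set in the corollary statement. When $r_i=1/2$, $D(r_i)=1/2$ almost surely, consistent with $\dU(1/2,1/2)$. Joint independence of $(D_1,\ldots,D_s)$ follows from the independence already established for $D(\cdot)$'s marginals.

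There is no serious obstacle: the only care needed is in the opening step, where one must convert the combinatorial fact \emph{``several entries of a uniformly random permutation look jointly uniform''} into convergence of the permutation process in the sense required by Theorem~\ref{thm:degreejoint}. Once that is in place the result is a pure bookkeeping exercise with $F_\mu(x,y)=xy$. I also interpret the symbol $\stackrel{\sP}{\rightarrow}$ in the statement as joint convergence in distribution, since the limit law is genuinely non-degenerate.
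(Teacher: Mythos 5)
Your proof is correct and follows the same route as the paper: identify the limit $Z(\cdot)$ of the permutation process (independent $\dU[0,1]$ marginals), deduce via Corollary~\ref{cor:independent} that the permuton limit is Lebesgue measure with $F_\mu(x,y)=xy$, and then read off $D(t)=t+Z(t)(1-2t)\sim\dU(t,1-t)$ from Theorem~\ref{thm:degreejoint}. The only difference is that you spell out the sampling-without-replacement justification and the $r_i=1/2$ degenerate case, which the paper leaves implicit; your note that $\stackrel{\sP}{\rightarrow}$ must really mean convergence in distribution is also correct.
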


\begin{proof}For a uniformly random permutation, the permutation process $\pi_n(\cdot)\stackrel{w}\Rightarrow Z(\cdot)$, where $Z(t)$ is independent $\dU[0, 1]$, for all $t\in [0, 1]$. Moreover, the permuton $\nu_{\pi_n}$ converges to the Lebsegue measure on $[0, 1]^2$. Therefore, for $a\in [0, 1]$, by Theorem \ref{thm:degreejoint} $$D_a=a+U-2aU=(1-a)U+a(1-U)\sim \dU(a, 1-a),$$
where $U\sim \dU(0, 1)$. 
\end{proof}

Figure \ref{fig:degree_sequence} shows the degree proportion of the labelled vertices in the permutation graph associated with a random permutation of length $n=10^5$.  The symmetry in the figure around the {\it mid-vertex}, that is the vertex labeled $n/2$, is because the distribution of $d_n(i)$ and $d_n(n+1-i)$ are the same for every $i\in [n]$. This is confirmed by the above Corollary, which shows that for $r\in [0, 1]$, the degree proportion of the vertex labelled $\ceil{nr}$ converges to the uniform distribution over the interval $[r, 1-r]$. The shrinking length of this interval as $r$ approaches $1/2$, explains the fan-like structure of the degree proportion in Figure \ref{fig:degree_sequence}.  Moreover, when $r=1/2$ this implies that $d_n(\ceil{n/2})/n$ converges in probability to 1/2, as can be seen from Figure \ref{fig:degree_sequence}.

\begin{figure*}[h]
\centering
\begin{minipage}[c]{0.5\textwidth}
\centering
\includegraphics[width=3.0in]
    {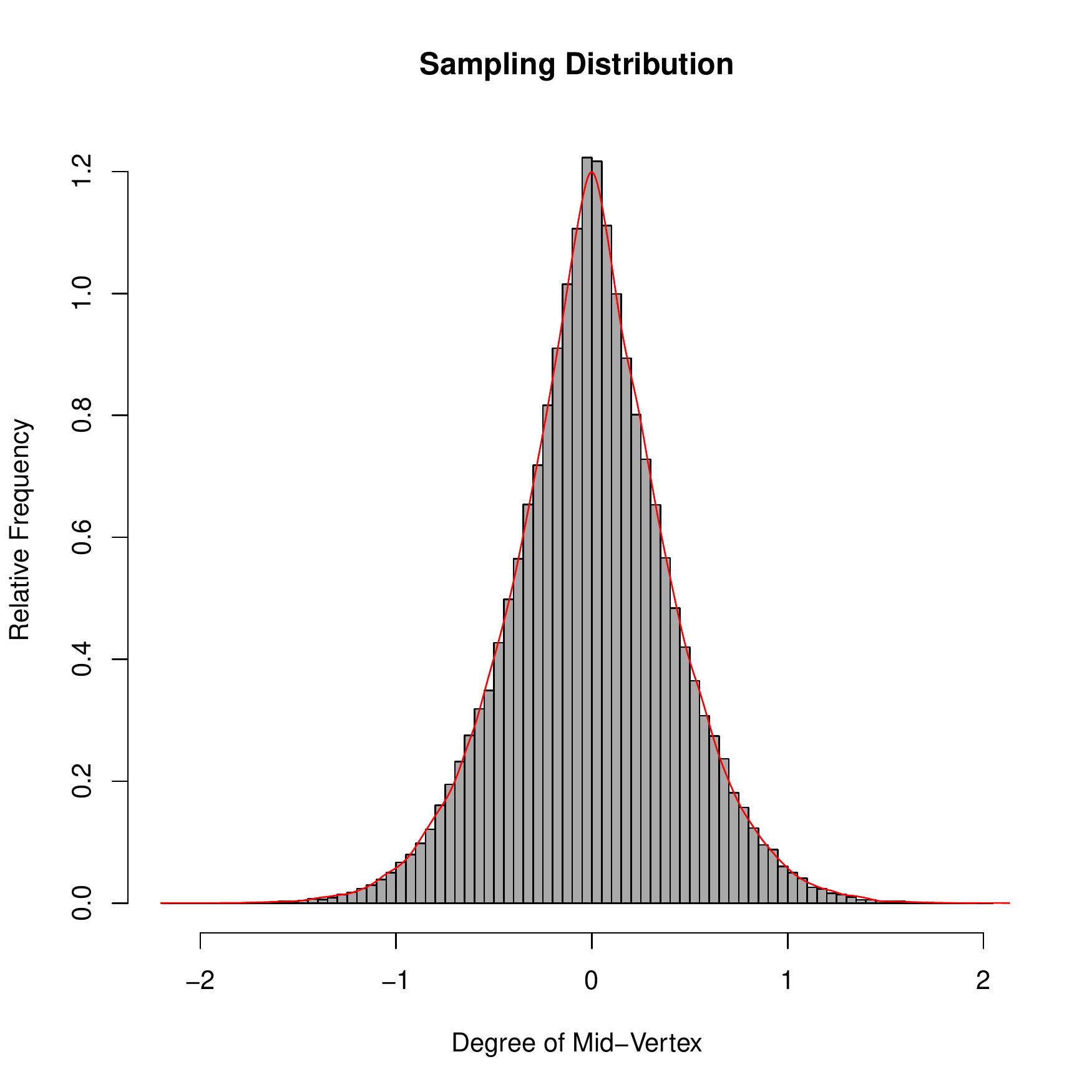}\\
\small{(a)}    
\end{minipage}
\begin{minipage}[c]{0.49\textwidth}
\centering
\includegraphics[width=3.0in]
    {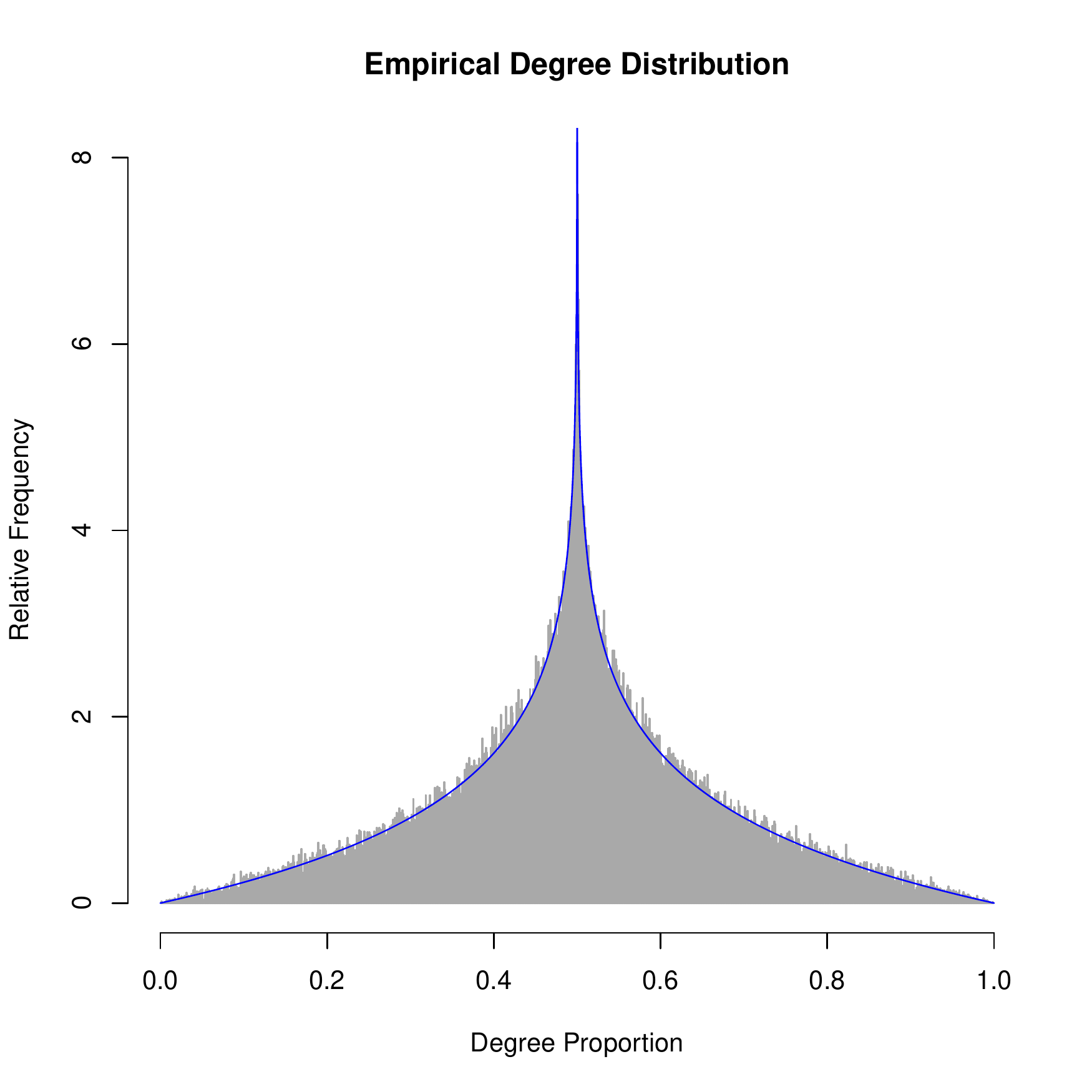}\\
\small{(b)}   
\end{minipage}
\caption{\small{(a) The sampling distribution of the degree of the mid-vertex (the vertex labeled $n/2$), for the permutation graph associated with a random permutation of length $n=10^4$, repeated over $10^5$ samples. (b) The empirical degree distribution of the permutation graph associated with a random permutation of length $n=10^5$. The blue curve represents the density of the limiting distribution $f(z)=-\log|2z-1|$. 
}}
\label{fig:degree_uniform}
\end{figure*}

It follows from Corollary \ref{cor:degreejoint} that the degree proportion of the mid-vertex, that is, the vertex labelled $\ceil{n/2}$, converges to 1/2 in probability. Therefore, it is reasonable to expect a central limit theorem for $d_n(\ceil{n/2})/n$ around 1/2 after an appropriate rescaling. This is detailed in the following theorem and illustrated in Figure \ref{fig:degree_uniform}(a). The proof is given in Section \ref{sec:midvertex}.

\begin{thm}Let $\pi_n\in S_n$ be a uniform random permutation and $d_n(1), d_n(2), \ldots d_n(n)$ be the degree sequence of the associated permutation graph $G_{\pi_n}$. Then 
\begin{equation}
\sqrt{n}\left(\frac{d_n(\ceil{n/2})}{n}-\frac{1}{2}\right)\stackrel{\sD}{\rightarrow}N(0, U(1-U)),
\label{eq:degreenormal}
\end{equation}
where $U\sim \dU[0,1]$.
\label{th:degreenormal}
\end{thm}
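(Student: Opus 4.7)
The plan is to condition on the value $K := \pi_n(i)$ at the mid-vertex $i := \lceil n/2\rceil$, reduce the degree to a single hypergeometric random variable, and then pass to the unconditional limit using that $K/n$ is asymptotically uniform on $[0,1]$.

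\textbf{Algebraic reduction.} I would begin by decomposing $d_n(i) = A + B$, where $A := a_n(i) = \#\{j<i:\pi_n(j)>K\}$ and $B := b_n(i) = \#\{j>i:\pi_n(j)<K\}$. Two elementary counts over $[n]\setminus\{i\}$ --- ``how many labels exceed $K$'' (giving $n-K$) and ``how many positions lie in $\{i+1,\dots,n\}$'' (giving $n-i$) --- each split according to whether $\pi_n(j)$ exceeds $K$, and subtracting the two resulting identities yields $B - A = K - i$ and hence the crucial identity
$$d_n(i) = 2A + K - i.$$
Conditional on $K=k$, the restriction $(\pi_n(j))_{j\ne i}$ is a uniformly random bijection $[n]\setminus\{i\}\to[n]\setminus\{k\}$, so $A \sim \mathrm{Hyp}(n-1,n-k,i-1)$ with conditional mean $(i-1)(n-k)/(n-1)$ and conditional variance $(i-1)(n-i)(n-k)(k-1)/[(n-1)^2(n-2)]$.

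\textbf{Conditional CLT.} For any $u \in (0,1)$, if $k/n \to u$ then the conditional variance behaves like $\tfrac{n}{4}u(1-u)(1+o(1))$, which diverges, so the classical CLT for hypergeometric distributions gives
$$\frac{A-\E[A\,|\,K]}{\sqrt{\mathrm{Var}(A\,|\,K)}} \stackrel{d}{\to} N(0,1)$$
conditionally on $K=\lceil nu\rceil$. A short calculation (separating the parities of $n$) shows $\E[d_n(i)/n\mid K] - 1/2 = O(1/n)$ uniformly in $K$, so
$$\sqrt{n}\left(\frac{d_n(i)}{n} - \frac{1}{2}\right) = \frac{2\bigl(A - \E[A\,|\,K]\bigr)}{\sqrt{n}} + O(n^{-1/2}),$$
and the conditional distribution of the left-hand side converges to $N(0,u(1-u))$.

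\textbf{Unconditioning, and the main obstacle.} Since $K/n$ is asymptotically $\dU[0,1]$, I would extract the unconditional limit by passing to characteristic functions and applying dominated convergence, obtaining
$$\E\bigl[e^{it\sqrt{n}(d_n(i)/n - 1/2)}\bigr] \to \int_0^1 e^{-t^2 u(1-u)/2}\,du,$$
which is the characteristic function of the claimed mixture $N(0,U(1-U))$. The delicate point --- and, in my view, the main obstacle --- is upgrading the pointwise-in-$u$ conditional CLT to an unconditional statement: the conditional variance $u(1-u)$ degenerates at the endpoints $u\in\{0,1\}$, so any attempt at a uniform-in-$k$ hypergeometric CLT becomes awkward near $k=1$ or $k=n$. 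Working at the level of characteristic functions sidesteps this cleanly, because $|e^{it\cdot}|\le 1$ supplies a bounded, hence integrable, envelope, so only a.e.\ convergence of the conditional characteristic function is required.
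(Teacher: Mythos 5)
Your proposal is correct and follows the same core strategy as the paper: condition on $K=\pi_n(\lceil n/2\rceil)$, observe that the conditional law of $a_n(\lceil n/2\rceil)$ is hypergeometric (the paper's Proposition \ref{hyper}), apply a conditional CLT, and then integrate out $K/n$, which is asymptotically uniform. The two arguments differ only in how they execute the unconditioning step, and your route is arguably slicker. The paper truncates the conditioning variable to $j\in[n\delta,n(1-\delta)]$ (at an $O(\delta)$ cost), invokes the Berry--Esseen theorem for hypergeometric distributions to control the conditional CDF uniformly on that range, replaces $\sigma_n$ by its limit using uniform continuity of $\Phi$, and finally sends $n\to\infty$ then $\delta\to 0$; this is a quantitative argument that requires a rate. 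You instead pass to characteristic functions and apply dominated convergence, using $|e^{it\cdot}|\le 1$ as the envelope, so only a.e.-in-$u$ pointwise convergence of the conditional characteristic function is needed and the boundary degeneracy at $u\in\{0,1\}$ is handled automatically (it is a null set). Both buy the same theorem; the Berry--Esseen route gives explicit uniform rates away from the boundary as a byproduct, while your route is shorter and more elementary in that it avoids needing a quantitative CLT. Your algebraic identity $d_n(i)=2A+K-i$ is exactly the relation $b_n(i)-a_n(i)=\pi_n(i)-i$ noted in the paper, and the uniform $O(1/n)$ bound on $\E[d_n(i)/n\mid K]-1/2$ is correct for both parities of $n$.
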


The empirical degree distribution of a uniformly random permutation graph is a direct consequence of the Proposition \ref{ppn:degree_nu}. The limiting density of the empirical degree distribution is depicted in Figure \ref{fig:degree_uniform}(b). The density is supported on $[0, 1]$ and has an interesting shape: it vanishes at the end points, and blows up to infinity at $z = 1/2$. 

\begin{cor}
Let $\pi_n\in S_n$ be a uniformly random permutation and $G_{\pi_n}$ the associated permutation graph. Then the empirical degree distribution 
\begin{equation}
\kappa(G_{\pi_n}):=\frac{1}{n}\sum_{i=1}^n\delta_{\frac{d_n(i)}{n}}\dto Z:=(1-U)V+U(1-V),
\label{eq:deglimit}
\end{equation}
 where $U, V$ are independent $\dU(0,1)$. Equivalently, $Z$ has the same distribution as $\dU(V, 1-V)$, where $V\sim \dU(0,1)$, and has a density with respect to Lebesgue measure given by $f_Z(z)=-\log|1-2z|$, for $0\le z\le 1$.
\label{cor:deglimit}
\end{cor}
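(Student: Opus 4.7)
The plan is to apply Proposition \ref{ppn:degree_nu} directly with the limiting permuton $\nu$ taken to be Lebesgue measure on $[0,1]^2$, since this is the permuton limit of a uniformly random permutation (the same fact is used implicitly in the proof of Corollary \ref{cor:degreejoint}). Under Lebesgue measure the marginals are independent $\dU(0,1)$ and the distribution function is $F_\nu(x,y)=xy$, so the limiting random variable in Proposition \ref{ppn:degree_nu} becomes $X_1+Y_1-2X_1Y_1$. Renaming $U=X_1$, $V=Y_1$, this rearranges as
$$X_1+Y_1-2X_1Y_1=(1-U)V+U(1-V),$$
which gives the first representation in \eqref{eq:deglimit}.

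For the second (mixture) representation, I would condition on $V=v$ and read off $Z=v+(1-2v)U$. This is an affine function of $U\sim\dU(0,1)$, hence uniform on the interval with endpoints $v$ and $1-v$; under the paper's convention $[a,b]=[a\wedge b,a\vee b]$ this is precisely $\dU(v,1-v)$. Marginalizing over $V\sim\dU(0,1)$ then yields $Z\sim\dU(V,1-V)$ as claimed.

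For the density, I would use the mixture representation: the conditional density of $Z$ given $V=v$ equals $1/|1-2v|$, supported on $[v\wedge(1-v),\,v\vee(1-v)]$. Splitting the resulting integral in $v$ at $1/2$ gives
$$f_Z(z)=\int_{0}^{z\wedge(1-z)}\frac{dv}{1-2v}+\int_{z\vee(1-z)}^{1}\frac{dv}{2v-1},$$
and the substitution $v\mapsto 1-v$ identifies the two integrals. Each is elementary and evaluates to $-\tfrac{1}{2}\log|1-2z|$; summing yields $f_Z(z)=-\log|1-2z|$ for $z\in[0,1]$.

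There is no substantive obstacle here: the corollary is a direct computation downstream of Proposition \ref{ppn:degree_nu} once the limiting permuton has been identified. The only bookkeeping point is carrying through the cases $v\le 1/2$ and $v>1/2$ cleanly when setting up the support of the conditional uniform law, which the paper's interval convention handles in one stroke.
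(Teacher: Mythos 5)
Your proposal is correct and follows essentially the same route as the paper: identify the limiting permuton as Lebesgue measure on $[0,1]^2$, apply Proposition~\ref{ppn:degree_nu} to get $X_1+Y_1-2X_1Y_1$, and obtain the density by conditioning on one of the two uniforms and integrating out the other. The paper conditions on $U$, writes out the CDF $\P(Z\le z)$ as a sum of two integrals, and differentiates, while you condition on $V$ and work directly with the conditional density via the explicit mixture form $Z\mid V=v\sim\dU(v,1-v)$; this is the same computation in slightly cleaner packaging, and your substitution $v\mapsto 1-v$ identifying the two pieces is a nice shortcut that the paper does not spell out.
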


The degree distribution of a uniformly random permutation graph has different variability depending on the label of the vertex. In fact, the next corollary shows that a uniformly random permutation graph is rather irregular: a typical vertex has degree around $n/2$, however the minimum and the maximum degrees are far apart:

\begin{cor}
For a uniformly random permutation $\pi_n$, $n^{-1}\delta(G_{\pi_n}) \stackrel{\sP}{\rightarrow}0$ and $n^{-1}\Delta(G_{\pi_n})\stackrel{\sP}\rightarrow 1$, where $\Delta(G_{\pi_n})$ and  $\delta(G_{\pi_n})$ are the maximum and the minimum degree of $G_{\pi_n}$, respectively.
\label{cor:max_min_degree}
\end{cor}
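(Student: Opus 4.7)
The overall plan is to prove the minimum-degree statement directly and obtain the maximum-degree statement from it by a complementation symmetry of the model. The minimum-degree claim will follow from Corollary \ref{cor:degreejoint} applied to a growing number of boundary-close labels: the \emph{joint independence} in that corollary is the ingredient that turns ``each boundary vertex has positive probability of a small degree'' into ``some boundary vertex has a small degree with probability tending to $1$.''

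\emph{Reducing $\Delta$ to $\delta$.} Let $\rho\in S_n$ be the reversal $\rho(k)=n+1-k$. A direct check gives
\[
(i-j)\bigl((\rho\circ\pi_n)(i)-(\rho\circ\pi_n)(j)\bigr)=-(i-j)\bigl(\pi_n(i)-\pi_n(j)\bigr),
\]
so $G_{\rho\circ\pi_n}$ is the complementary graph $G_{\pi_n}^{c}$. Since $\rho\circ\pi_n$ is uniform on $S_n$ whenever $\pi_n$ is, this gives the distributional identity $G_{\pi_n}^{c}\stackrel{d}{=}G_{\pi_n}$. Combined with the elementary identity $\Delta(G)+\delta(G^c)=n-1$, this yields $\Delta(G_{\pi_n})\stackrel{d}{=}n-1-\delta(G_{\pi_n})$, so that $n^{-1}\delta(G_{\pi_n})\stackrel{\sP}{\rightarrow}0$ immediately implies $n^{-1}\Delta(G_{\pi_n})\stackrel{\sP}{\rightarrow}1$.

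\emph{Proving $n^{-1}\delta(G_{\pi_n})\stackrel{\sP}{\rightarrow}0$.} The heuristic is that for $r$ close to $0$ the limit $\dU(r,1-r)$ is close to $\dU(0,1)$, which assigns mass of order $\epsilon$ to the interval $[0,\epsilon]$. Concretely, fix $\epsilon>0$ and an integer $k\geq 1$, and pick distinct values $0<r_1<\cdots<r_k<\epsilon/2$. By Corollary \ref{cor:degreejoint},
\[
\left(\frac{d_n(\ceil{nr_1})}{n},\ldots,\frac{d_n(\ceil{nr_k})}{n}\right)\stackrel{\sD}{\Rightarrow}(D_1,\ldots,D_k),
\]
with $D_j\sim\dU(r_j,1-r_j)$ independent. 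A direct calculation shows $\Pr(D_j\leq\epsilon)=(\epsilon-r_j)/(1-2r_j)\geq \epsilon/2$ for every $j$. Since $\delta(G_{\pi_n})/n\leq\min_{j\le k}d_n(\ceil{nr_j})/n$ and $\min_{j\leq k} D_j$ has an absolutely continuous distribution (so $\epsilon$ is a continuity point), the continuous mapping theorem and independence give
\[
\limsup_{n\to\infty}\Pr\bigl(\delta(G_{\pi_n})/n>\epsilon\bigr)\leq\prod_{j=1}^{k}\Pr(D_j>\epsilon)\leq(1-\epsilon/2)^{k}.
\]
Letting $k\to\infty$ finishes the proof.

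\emph{Main obstacle.} There is no substantive obstacle here; the statement is essentially a soft consequence of Corollary \ref{cor:degreejoint}. The only items requiring a line of care are the complementation identity $G_{\rho\circ\pi_n}=G_{\pi_n}^c$ (used to convert maximum to minimum) and the uniform lower bound $\Pr(D_j\leq\epsilon)\geq\epsilon/2$ over $r_j\in(0,\epsilon/2)$ (so that the $k$-fold product can be driven to $0$); both are immediate.
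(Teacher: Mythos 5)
Your proof is correct and the argument for the minimum degree is essentially the same as the paper's: apply Corollary \ref{cor:degreejoint} to a growing number $k$ of indices near $0$, use independence to bound $\Pr(\min_j D_j>\epsilon)$ by a product of factors each bounded away from $1$, and let $k\to\infty$. (The paper picks $t_i\le\varepsilon/4$ and bounds each factor by $\tfrac{1-\varepsilon}{1-\varepsilon/2}$; you pick $r_j<\epsilon/2$ and bound by $1-\epsilon/2$; both work.) The one place you genuinely diverge is the maximum degree: the paper dismisses it with ``the result for the maximum degree can be proved similarly,'' meaning a rerun of the same argument with $D_i$ close to $1$ rather than to $0$, whereas you derive it cleanly by the complementation identity $G_{\rho\circ\pi_n}=G_{\pi_n}^{c}$ with $\rho(k)=n+1-k$, uniformity of $\rho\circ\pi_n$, and $\Delta(G)+\delta(G^c)=n-1$. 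That is a tidier reduction: it removes the need to redo the minimum-degree calculation and makes the symmetry between the two claims explicit.
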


\begin{proof}
To prove the result for the minimum degree it suffices to show that for any $0<\varepsilon<1/2$, $\lim_{n\rightarrow\infty}\P(\delta(G_{\pi_n})> n\varepsilon)=0$. To this end, choose $r\ge 1$ arbitrary, and  real numbers $0< t_1<t_2<\cdots<t_r\le \varepsilon/4$. By Corollary \ref{cor:degreejoint} $$\lim_{n\rightarrow \infty}\P_n\left(\min_{1\le i\le r}d_n({\ceil {nt_i} })>n\varepsilon\right)=\prod_{i=1}^r\frac{1-\varepsilon-t_i}{1-2t_i}\le \left(\frac{1-\varepsilon}{1-\varepsilon/2}\right)^r$$
Since this is true for every $r$ and for all $\varepsilon>0$ fixed, letting $r\rightarrow\infty$ gives the result. The result for the maximum degree can be proved similarly. 
\end{proof}

In light of the above corollary, it is natural to ask whether $\delta(G_{\pi_n})$ converges to a non degenerate distribution after appropriate rescaling. The following theorem, which is proved in Section \ref{sec:min_deg} shows that the minimum degree of a uniformly random permutation graph is a Rayleigh distribution.

\begin{thm}\label{min_deg}
For a uniformly random permutation $\pi_n$,  
$$\frac{\delta(G_{\pi_n})}{\sqrt{n}}\stackrel{\sD}{\rightarrow}\Gamma,$$
where $\Gamma$ is the Rayleigh distribution with parameter $\frac{1}{\sqrt 2}$, that is, $\P(\Gamma>\gamma)=e^{-\gamma^2}$ for all $\gamma>0$.
\end{thm}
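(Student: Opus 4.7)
The plan is to localize the minimum-degree vertices to neighborhoods of the two corners $(1,1)$ and $(n,n)$ of the permutation array and then invoke a Poisson point process limit at those corners. The starting point is an exact identity: for any $i \in [n]$ with $\pi_n(i) = k$, a direct count of inversions yields
\begin{equation*}
d_n(i) \;=\; (i-1) + (k-1) - 2\, M_{i,k} \;=\; (n-i) + (n-k) - 2\, M'_{i,k},
\end{equation*}
where $M_{i,k} := |\{j < i : \pi_n(j) < k\}|$ and $M'_{i,k} := |\{j > i : \pi_n(j) > k\}|$. Since $0 \le M_{i,k} \le \min(i-1, k-1)$, this gives the deterministic lower bound $d_n(i) \ge |i - \pi_n(i)|$, so any vertex of degree at most $\gamma \sqrt n$ must have $|i - \pi_n(i)| \le \gamma \sqrt n$.

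Next I would set up a Poisson point process limit at the two corners. For a fixed cutoff $T > \gamma$ define the rescaled point sets
\begin{equation*}
\Pi_n^{(0)} := \{(i/\sqrt n,\, \pi_n(i)/\sqrt n) : i, \pi_n(i) \le T\sqrt n\}, \quad \Pi_n^{(1)} := \{((n-i)/\sqrt n,\, (n-\pi_n(i))/\sqrt n) : n-i, n-\pi_n(i) \le T\sqrt n\}.
\end{equation*}
A factorial-moment computation (the number of points of a uniform random permutation in disjoint rectangles of the $n \times n$ grid is multivariate hypergeometric with vanishing sampling ratios) shows that $(\Pi_n^{(0)}, \Pi_n^{(1)})$ converges jointly in distribution to a pair of independent unit-intensity Poisson point processes $(\Pi^{(0)}, \Pi^{(1)})$ on $[0,T]^2$. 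For a vertex $i$ with $(i/\sqrt n, \pi_n(i)/\sqrt n) = (a,b) \in \Pi_n^{(0)}$, the count $M_{i,\pi_n(i)}$ is the number of points of $\Pi_n^{(0)}$ in $[0,a] \times [0,b]$, which is $O_{\P}(1)$ in the Poisson limit; the first identity then gives $d_n(i)/\sqrt n \stackrel{\sP}{\to} a + b$. The symmetric identity handles the $(n,n)$-corner in the same way.

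The middle vertices must be shown negligible. Conditional on $\pi_n(i) = k$, $M_{i,k}$ is hypergeometric with mean $\mu_{i,k} := (i-1)(k-1)/(n-1)$, and $d_n(i) \le \gamma\sqrt n$ forces $M_{i,k} \ge (i + k - 2 - \gamma\sqrt n)/2$ together with $|i-k| \le \gamma\sqrt n$. When $\min\{i, n-i\} \ge T\sqrt n$ (so $i$ is outside both corners and, by the diagonal constraint, so is $k$), the required deviation of $M_{i,k}$ from $\mu_{i,k}$ is at least of order $\sqrt n$, and a Hoeffding or Bennett bound for the hypergeometric yields $\P(d_n(i) \le \gamma \sqrt n \mid \pi_n(i) = k) \le e^{-c_T \sqrt n}$ uniformly in such $(i,k)$, with $c_T \to \infty$ as $T \to \infty$. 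Summing $(1/n)$ times this bound over $n$ middle positions and $O(\sqrt n)$ admissible values of $k$ gives an $o(1)$ contribution to $\E\,|\{i : d_n(i) \le \gamma\sqrt n\}|$; a union bound then shows no middle vertex is of small degree with high probability.

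Combining the three steps and letting $T \to \infty$,
\begin{equation*}
\delta(G_{\pi_n})/\sqrt n \;\stackrel{\sD}{\longrightarrow}\; \min\!\left\{\inf_{(a,b) \in \Pi^{(0)}}(a+b),\; \inf_{(a,b) \in \Pi^{(1)}}(a+b)\right\},
\end{equation*}
where $\Pi^{(0)}, \Pi^{(1)}$ are independent unit-intensity Poisson point processes on $[0,\infty)^2$. Since the triangle $\{(a,b) : a, b \ge 0,\, a + b \le \gamma\}$ has area $\gamma^2/2$, each inner infimum exceeds $\gamma$ with probability $e^{-\gamma^2/2}$, and independence yields $\P(\delta(G_{\pi_n})/\sqrt n > \gamma) \to e^{-\gamma^2}$, the Rayleigh$(1/\sqrt 2)$ tail. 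The hard part will be the uniform control in the middle-vertex step together with a clean passage in the double limit $n \to \infty$ and $T \to \infty$; a slicker route is to apply the Chen--Stein method directly to the count $|\{i : d_n(i) \le \gamma\sqrt n\}|$ to show convergence to $\mathrm{Poisson}(\gamma^2)$, which yields the Rayleigh law in one stroke.
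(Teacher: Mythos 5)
Your proposal is correct and takes a genuinely different route from the paper. The paper introduces the auxiliary quantity $c_n(i)$ (which equals $i+\pi_n(i)$ near the $(1,1)$-corner and $2(n+1)-i-\pi_n(i)$ near the $(n,n)$-corner), proves in Lemmas~\ref{lem_hyper1}--\ref{lem_hyper2} via the hypergeometric concentration of Proposition~\ref{hyper} that the minimum of $d_n$ and the minimum of $c_n$ agree at scale $\sqrt n$, and then computes $\P(\min_i c_n(i)>\gamma\sqrt n)\to e^{-\gamma^2}$ by a hands-on permutation count in Lemma~\ref{lem:limit_dist} (conditioning on the image of the first $\gamma\sqrt n$ indices, counting admissible images for the last $\gamma\sqrt n$, and controlling the overlap via Markov's inequality). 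You instead pass to the Poisson weak limit of the rescaled permutation points near the two corners and read off the Rayleigh tail from the area $\gamma^2/2$ of the triangle $\{a+b\le\gamma\}$. Both are sound: your deterministic identity $d_n(i)=(i-1)+(\pi_n(i)-1)-2M_{i,\pi_n(i)}$ with the consequence $d_n(i)\ge|i-\pi_n(i)|$ is a clean restatement of what the paper uses implicitly when it notes $b_n(i)-a_n(i)=\pi_n(i)-i$; your $c_n$ at the corner is exactly the $a+b$ coordinate of your Poisson point. A caveat on the middle-vertex step: the uniform bound $e^{-c_T\sqrt n}$ you assert does require a Bennett/Bernstein-type inequality (using that the hypergeometric variance near $i\approx T\sqrt n$ is $O(T^2)$ while the needed deviation is $O(\sqrt n)$); a raw Hoeffding bound of the form $e^{-R^2/(2n)}$, as in Proposition~\ref{hyper}, gives only a constant at $i\approx T\sqrt n$ and one must instead sum the $i$-dependent bound over $i$, exactly as the paper does in establishing~\eqref{hyper5} and~\eqref{hyper6}. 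What each route buys: the paper's argument is elementary and self-contained, at the cost of a somewhat laborious two-corner conditional count; your Poisson-process argument (or the Chen--Stein alternative you sketch applied to $|\{i:d_n(i)\le\gamma\sqrt n\}|$) unifies both corners in one framework and makes the source of the $\gamma^2$ geometrically transparent, at the cost of invoking weak convergence of point processes and managing the double limit $n\to\infty$, $T\to\infty$.
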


\subsection{Mallows Random Permutation}
\label{mallows}

One of the most popular non-uniform model on permutations, which has applications in statistics \cite{clmallows},  is the Mallows measure. For $\beta \in \R$, denote by $\pi_n\sim M_{\beta, n}$ the Mallows random permutation over $S_n$ with probability mass function
$$m_{\beta, n}(\sigma):=\frac{e^{-\beta\cdot\frac{\lambda(\sigma)}{n}}}{\sum_{\sigma\in S_n}e^{-\beta\cdot\frac{\lambda(\sigma)}{n}}},$$  
where $\lambda(\sigma)=|\{(i, j): (i-j)(\pi_n(i)-\pi_n(j))<0\}|$ is the number of inversions of the permutation $\sigma$. The uniform random permutation corresponds to the case $\beta=0$. It is easy to check that for $\pi_n\in S_n$ chosen uniformly at random, $\nu_{\pi_n}$ converges weakly in probability to $\dU(0, 1)$. This was generalized to Mallow random permutations by Starr \cite{starr}. Using this and Theorem \ref{thm:degreejoint} we compute the limiting density of the degree proportion in a Mallows random permutation. The limiting density exhibits interesting phase transitions depending on the value of $\beta$. This is summarized in the following theorem and proved later in Section \ref{sec:mallows}.

\begin{figure*}[h]
\centering
\begin{minipage}[c]{0.49\textwidth}
\centering
\includegraphics[width=3.2in]
    {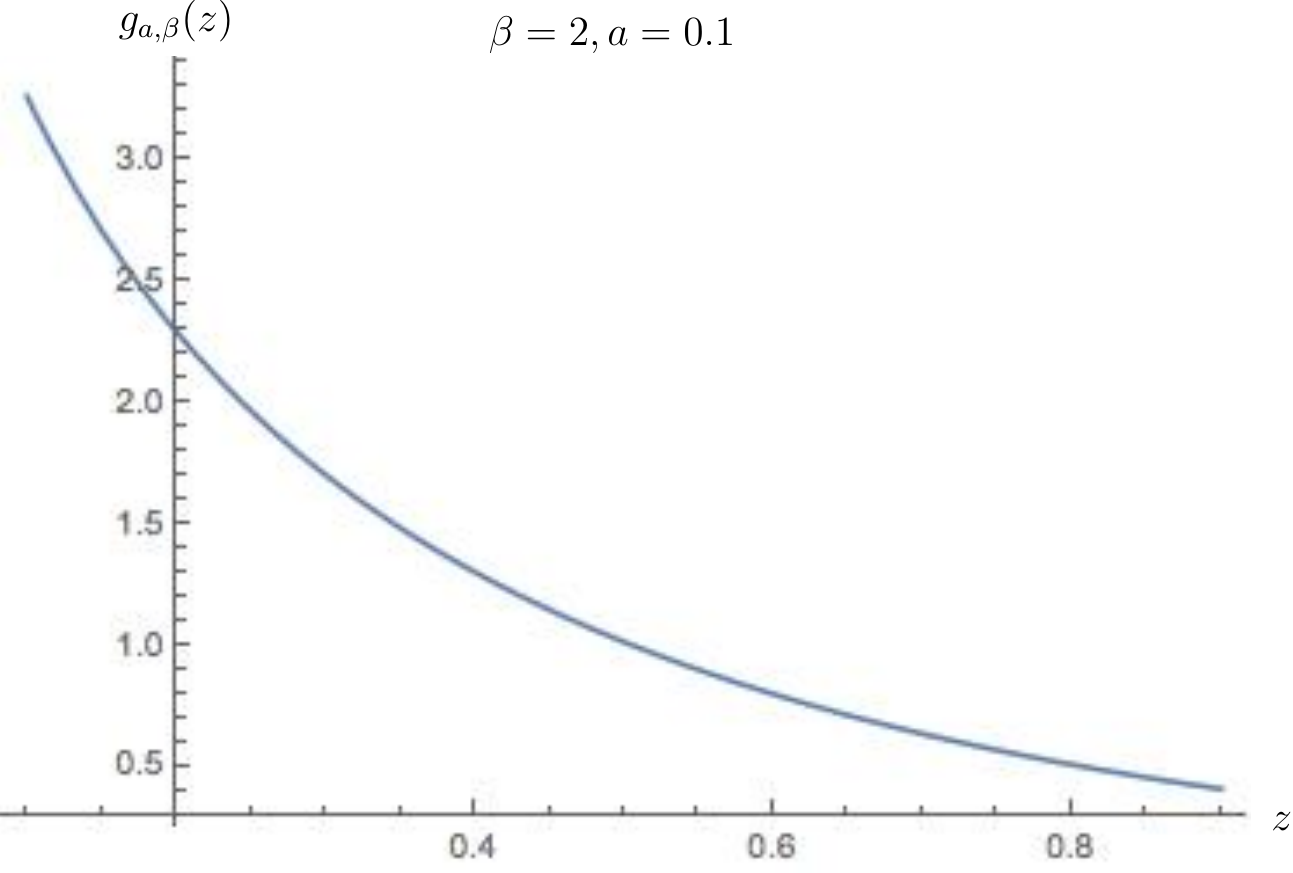}\\
\small{(a)}    
\end{minipage}
\begin{minipage}[c]{0.5\textwidth}
\centering
\includegraphics[width=3.5in]
    {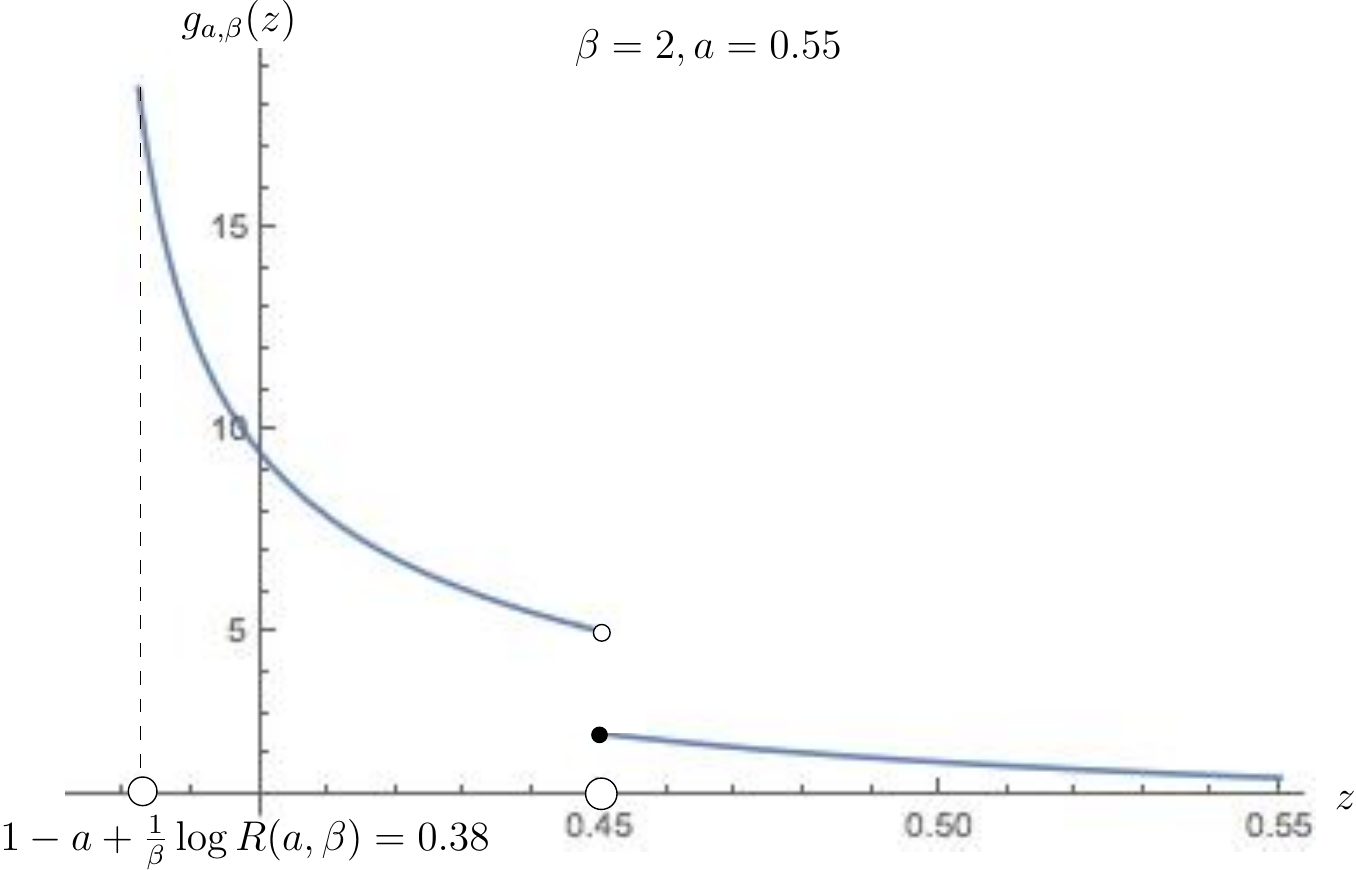}\\
\small{(b)}    
\end{minipage}
\caption{\small{Density of $D_{a, \beta}$ for $\beta=2$ and (a) $a=0.1$, and (b) $a=0.55$. For $\beta=2$, $a_c(\beta)=0.28311$. Since $0.1\notin (0.28311, 0.71689)$, the density (a) of $D_{0.1, 2}$ is a continuous function supported on $[0.1, 0.9]$. On the other hand, $0.55\in (0.28311, 0.71689)$, and  the density (b) of $D_{0.55, 2}$ is a piecewise continuous function with the pieces supported on $[0.38, 0.45)$ and $[0.45, 0.55]$, with a discontinuity at $1-a=0.45$. 
}}
\label{fig:density}
\end{figure*}

\begin{thm}Let $\beta \in \R$ and $\pi_n\sim M_{\beta, n}$. If $d_n(1), d_n(2), \ldots d_n(n)$ is the degree sequence of the associated permutation graph $G_{\pi_n}$, then the degree process 
$$d_n(\cdot)\stackrel{w}\Rightarrow Z(\cdot).$$
More precisely, for indices $0\leq  r_1<r_2< \cdots <r_s\leq 1$,
$$\left(\frac{d_n(\ceil{nr_1})}{n}, \frac{d_n(\ceil{nr_2})}{n}, \ldots, \frac{d_n(\ceil{nr_s})}{n}\right) \stackrel{\sD}{\rightarrow}(D_{1, \beta}, D_{2, \beta}, \ldots, D_{s, \beta}),$$
where $D_{1, \beta}, D_{2, \beta}, \ldots, D_{s, \beta}$ are independent, and
\begin{description}
\item[1] if $a\notin [a_c(\beta), 1-a_c(\beta)]$, $D_{a, \beta}$ has density: 
\begin{equation}
g_{a, \beta}(z)=\frac{\beta  e^{\frac{1}{2} \beta  (a-z)}}{(1-e^{-\beta})\left(e^{\beta  (a+z)}-e^\beta R(a, \beta)
\right)^{\frac{1}{2}}},  \quad z\in [a, 1-a];
\label{eq:density_2}
\end{equation}

\item[2] if $a\in (a_c(\beta), 1-a_c(\beta))$, $D_{a, \beta}$ has density: 
\begin{equation}
g_{a, \beta}(z)=
\left\{
\begin{array}{ccc}
\frac{\beta  e^{\frac{1}{2} \beta  (a-z)}}{(1-e^{-\beta})\left(e^{\beta  (a+z)}-e^\beta R(a, \beta)\right)^{\frac{1}{2}}}  & \text{ for } z\in \left[a, 1-a \right],    \\
\frac{2\beta  e^{\frac{1}{2} \beta  (a-z)}}{(1-e^{-\beta})\left(e^{\beta  (a+z)}-e^\beta R(a, \beta)\right)^{\frac{1}{2}}}   &  \text{ for } z\in \left[1-a+\frac{1}{\beta}\log R(a, \beta), a\wedge 1-a\right);
\end{array}
\right.
\label{eq:density_1}
\end{equation}
\end{description}
where 
\begin{equation}
R(a, \beta)=\frac{4 \left(e^{\beta }-e^{a \beta }\right) \left(e^{a \beta }-1\right)}{\left(e^{\beta }-1\right)^2}, \text{ and } a_c(\beta)=\frac{1}{2}-\frac{\log\cosh(\beta/2)}{\beta}.
\label{eq:Rab}
\end{equation}
\label{th:mallowsdegreejoint}
\end{thm}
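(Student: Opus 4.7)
The plan is to combine Starr's description of the Mallows limit permuton with Theorem \ref{thm:degreejoint} and Corollary \ref{cor:independent}. Starr \cite{starr} shows that for $\pi_n\sim M_{\beta,n}$ the permuton $\nu_{\pi_n}$ converges weakly in probability to a \emph{deterministic} permuton $\mu_\beta$ with an explicit density $u_\beta(x,y)$ expressed via hyperbolic functions of $\beta$. Since the Mallows measure is an exponential family in the number of inversions, I would verify the equicontinuity--type hypotheses of Section \ref{sec:equic} for $M_{\beta,n}$; this upgrades permuton convergence to finite--dimensional convergence of the permutation process $\pi_n(\cdot)\stackrel{w}{\Rightarrow}Z(\cdot)$, where $Z(t)$ has density $u_\beta(t,\cdot)$ and distinct marginals are independent. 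Theorem \ref{thm:degreejoint} combined with Corollary \ref{cor:independent} then gives that the finite--dimensional distributions of $d_n(\cdot)$ converge jointly and independently to $D(t)=t+Z(t)-2F_{\mu_\beta}(t,Z(t))$, so the independence in the statement is automatic and only the marginal distributions of $D_{a,\beta}$ remain to be identified.

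For each $a\in(0,1]$ I would write $D_{a,\beta}=\phi_a(Z(a))$ with $\phi_a(z):=a+z-2F_{\mu_\beta}(a,z)$ and $\phi_a'(z)=1-2\int_0^a u_\beta(s,z)\,ds$, so that the density of $D_{a,\beta}$ at $d$ is
\[
g_{a,\beta}(d)=\sum_{z\,:\,\phi_a(z)=d}\frac{u_\beta(a,z)}{|\phi_a'(z)|}.
\]
Plugging Starr's closed form for $u_\beta$ into both $\int_0^a u_\beta(s,z)\,ds$ and $F_{\mu_\beta}(a,z)$ yields elementary expressions in $e^{\beta a},e^{\beta z},e^\beta$. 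Using identities such as $e^\beta-e^{a\beta}=2e^{(1+a)\beta/2}\sinh((1-a)\beta/2)$, one should find that $|\phi_a'(z)|$ collapses to a multiple of $\sqrt{e^{\beta(a+z)}-e^\beta R(a,\beta)}$ and that $u_\beta(a,z)$ carries an overall factor $\beta e^{\beta(a-z)/2}/(1-e^{-\beta})$ times the same $1/\sqrt{\cdot}$ factor, producing formula \eqref{eq:density_2} whenever $\phi_a$ is injective on $[0,1]$. The two cases are distinguished by whether $\phi_a$ is monotone or has an interior critical point: $\phi_a'(z)=0$ is the condition $\int_0^a u_\beta(s,z)\,ds=\tfrac12$, and exploiting the symmetry $u_\beta(x,y)=u_\beta(1-x,1-y)$ of Starr's density, one shows that such a $z\in(0,1)$ exists iff $a\in(a_c(\beta),1-a_c(\beta))$, with the threshold $a_c(\beta)=\tfrac12-\log\cosh(\beta/2)/\beta$ arising precisely as the boundary value at which $z=1-a$ becomes the critical point (after simplification the condition reduces to $\cosh(\beta/2)=e^{\beta(1/2-a)}$). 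In that regime the image of $\phi_a$ is covered twice on the subinterval $[\,1-a+\beta^{-1}\log R(a,\beta),\;a\wedge(1-a)\,)$, where the density picks up the factor of $2$ in \eqref{eq:density_1}; as a consistency check one recovers $g_{a,0}(z)=1/|1-2a|$ in the $\beta\to 0$ limit (using $R(a,0)=4a(1-a)$), matching Corollary \ref{cor:degreejoint}.

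The main obstacle is purely computational: carrying out the two integrations $\int_0^a u_\beta(s,z)\,ds$ and $F_{\mu_\beta}(a,z)=\int_0^z\int_0^a u_\beta(s,w)\,ds\,dw$ against Starr's density, and then collapsing the output to the compact form $\sqrt{e^{\beta(a+z)}-e^\beta R(a,\beta)}$. The hyperbolic algebra is delicate but routine. A secondary point, not addressed by Starr, is verifying the equicontinuity hypothesis of Section \ref{sec:equic} for $M_{\beta,n}$; this should follow the blueprint laid out there for general exponential measures on permutations, exploiting the one-line--at-a-time generative structure of the Mallows distribution to control the tightness of the finite--dimensional marginals of $\pi_n(\cdot)$.
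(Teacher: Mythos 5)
Your proposal is correct and follows essentially the same plan as the paper: invoke Starr's permuton limit for the Mallows model, verify equicontinuity via the exponential-family criterion of Corollary \ref{cor:expfamily}, apply Theorem \ref{thm:degreejoint} together with Corollary \ref{cor:independent} to reduce the problem to the marginal law of $\phi_a(Z(a))=a+Z(a)-2F_{\mu_\beta}(a,Z(a))$, and then identify the density via the change-of-variables formula with a case split on whether $\phi_a$ is monotone on $[0,1]$. One small slip worth noting: at the threshold $a=a_c(\beta)$ the interior critical point of $\phi_a$ exits through $z_0=0$ (and through $z_0=1$ at $a=1-a_c(\beta)$), not through $z_0=1-a$ as you remark --- indeed one can check that $\varphi_\beta(a_c(\beta))=1/2$, which forces $z_0=0$ in the paper's explicit formula --- but the algebraic condition you derive, $\cosh(\beta/2)=e^{\beta(1/2-a)}$, is equivalent to $a=a_c(\beta)$, so your final formulas land in the right place, and the paper's route of working directly from Starr's closed-form $M_\beta(a,b)$ (rather than integrating the density) is merely a tactical convenience for the Jacobian computation.
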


\begin{figure*}[h]
\centering
\begin{minipage}[c]{1.0\textwidth}
\centering
\includegraphics[width=5.0in]
    {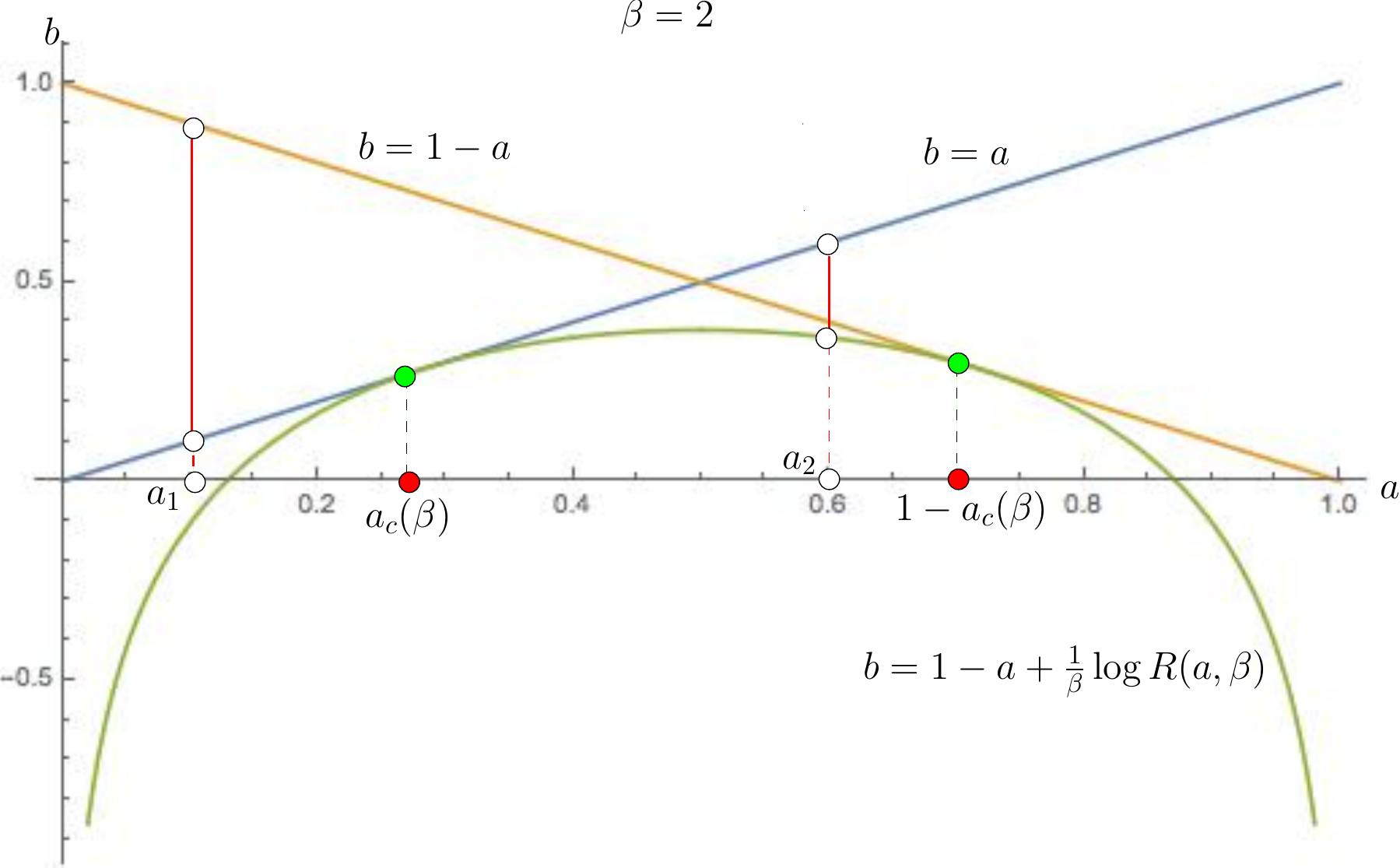}\\
\end{minipage}
\caption{\small{The support of $D_{a, \beta}$, for $\beta=2$ and  $a\in [0, 1]$:  The points of tangency of the curve  $b=1-a+\frac{1}{\beta}\log R(a, \beta)$ with the two straight lines $b=a$ and $b=1-a$ are colored green. The corresponding red points on the $x$-axis are the critical points $a_c(\beta)$ and $1-a_c(\beta)$.  For a fixed $a\in [0, 1]$, the support of $D_{a, \beta}$ is the interval intercepted  by the vertical line at $a$ either between the two straight lines (if $a\notin (a_c(\beta), 1-a_c(\beta))$, for example, when $a=a_1$), or between the curve and one of the two straight lines (if $a\in (a_c(\beta), 1-a_c(\beta))$, for example, when $a=a_2$).}}
\label{fig:range}
\end{figure*}

The above theorem gives the limiting distribution of the degree process of the permutation graph $G_{\pi_n}$ associated with a Mallows random permutation $\pi_n\sim M_{\beta, n}$. For $\beta\in \R$ fixed, the limiting distribution of $d_n(\ceil{na})/n$ has a phase transition depending on the value of $a\in [0, 1]$. There exists two {\it critical points} $a_c(\beta)$ and $1-a_c(\beta)$, such  that
for $a\notin [a_c(\beta), 1-a_c(\beta]$, the limiting density of $D_{a, \beta}$ is a continuous function supported on $[a, 1-a]$. However, for $a$ in the  {\it critical interval} $(a_c(\beta), 1-a_c(\beta))$, the density of $D_{a, \beta}$ breaks into two piecewise continuous parts on the intervals $$\left[1-a+\frac{1}{\beta}\log R(a, \beta), a\wedge 1-a\right), \text{ and } (a\wedge 1-a,  a\vee 1-a],$$ with a discontinuity at the point $a\wedge 1-a$.  Plots of the limiting density of $D_{a, \beta}$ are shown in Figure \ref{fig:density} for $\beta=2$ and $a=0.1$ and $a=0.55$. The changes in the support of $D_{a, \beta}$ for values of $a$ in the critical interval is depicted in Figure \ref{fig:range}.

The {\it critical curves} $\beta \mapsto a_c(\beta)$ and $\beta \mapsto 1-a_c(\beta)$ are shown in Figure \ref{fig:curve}. For a fixed $\beta_0\in \R$ the critical interval $(a_c(\beta_0), 1-a_c(\beta_0))$ is the interval between the 2 curves  intercepted by the vertical line at $\beta_0$. Note that for $\beta=0$, the $a_c(\beta)=1-a_c(\beta)=1/2$, that is, the  critical interval is empty. Therefore, for a uniform random permutation, the limiting density has no phase transition, as elaborated in Corollary \ref{cor:degreejoint}.

\begin{figure*}[h]
\centering
\begin{minipage}[c]{1.0\textwidth}
\centering
\includegraphics[width=4.0in]
    {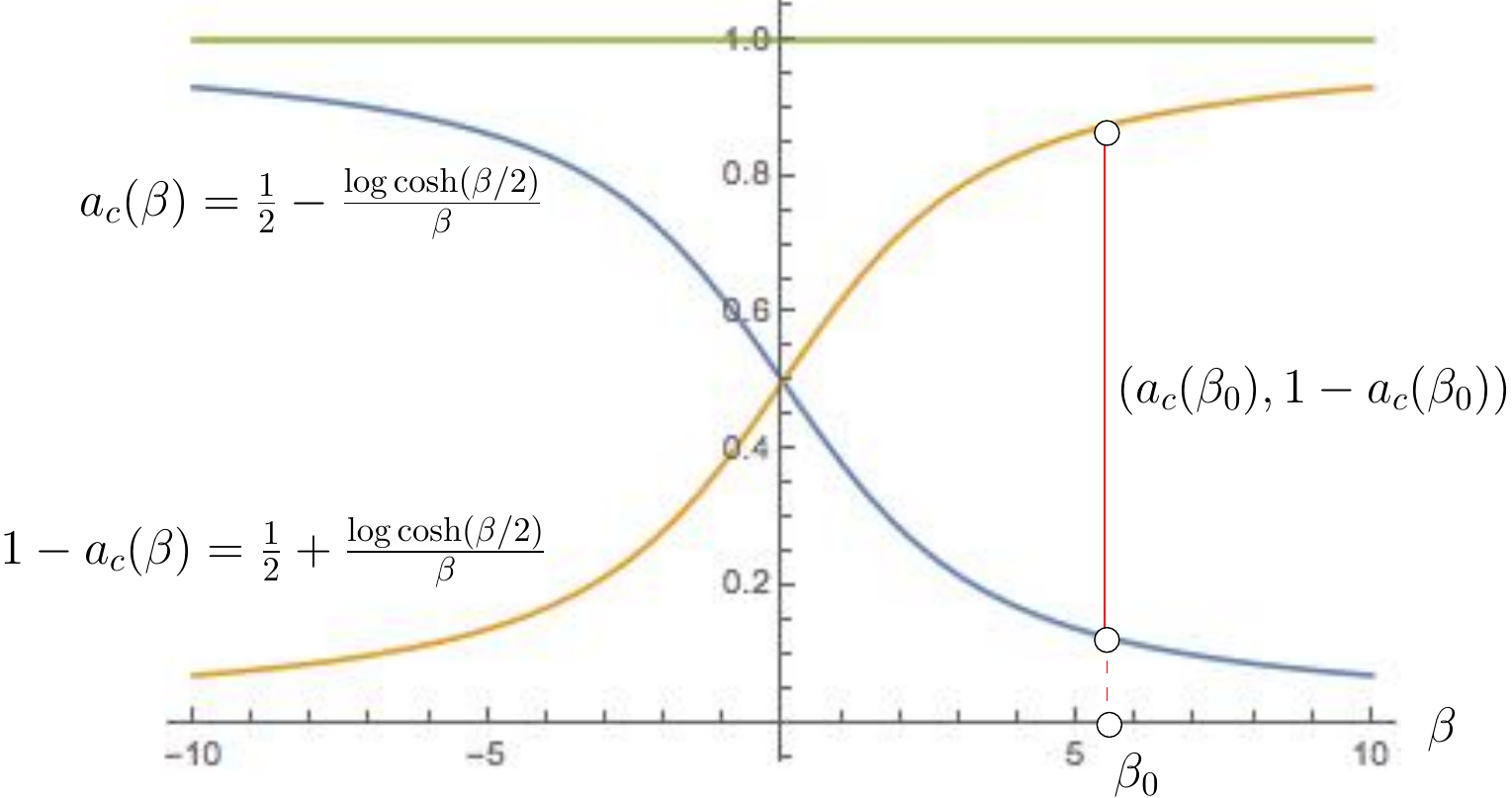}\\
\end{minipage}
\caption{\small{The critical transition curves $a_c(\beta)$ and $1-a_c(\beta)$ for $\beta\in [-10, 10]$. For a fixed $\beta_0\in \R$ the critical interval $(a_c(\beta_0), 1-a_c(\beta_0))$ is the interval between the 2 curves  intercepted by the vertical line at $\beta_0$.}}
\label{fig:curve}
\end{figure*}

The phase transition in the density of $D_{a, \beta}$ can be reinterpreted by fixing $a\in [0, 1]$ and varying $\beta$: Theorem \ref{th:mallowsdegreejoint} shows that for $a\in [0, 1]$ fixed, there exists a critical point $\beta_c(a)$ (obtained by solving for $\beta$ in $a_c(\beta)=a$) such that for $\beta \in[0, \beta_c(a)]$, the density of $D_{a, \beta}$ is a continuous function supported on $[a, 1-a]$. However, for $\beta > \beta_c(a)$ the density of $D_{a, \beta}$ breaks into two piecewise continuous parts with a discontinuity at the point $a\wedge 1-a$. If $\beta=1/T$ denotes the inverse temperature, then this phenomenon is the effect of {\it replica symmetry breaking} in statistical physics as one moves from the high temperature to the low temperature regime.

\section{Degree Distribution of Random Permutations}
\label{sec:main}

In this section we prove the convergence of the degree process whenever the permutation process converges.
Recall that the Kolmogorov-Smirnov distance  on the space of probability measures on $[0,1]^2$ is defined by
$$||\nu-\mu||_{\mathrm KS}:=\sup_{0\le x,y\le 1}|F_\nu(x,y)-F_\mu(x,y)|,$$
where $F_\nu$ and $F_\mu$ are the bivariate distribution functions of $\nu$ and $\mu$ respectively. Note that convergence in Kolmogorov-Smirnov distance implies weak convergence, but not conversely.  Finally, for $\pi_n\in S_n$ define the {\it empirical permutation measure} 
\begin{equation}
\tilde \nu_{\pi_n}=\frac{1}{n}\sum_{i\in [n]}\delta_{\left(\frac{i}{n}, \frac{\pi_n(i)}{n}\right)}.
\label{mu:emp}
\end{equation}
It is easy to check that the permuton $\nu_{\pi_n}$ associated with the permutation $\pi_n$ satisfies $||\nu_{\pi_n}-\tilde \nu_{\pi_n}||_{\mathrm{KS}}\pto 0$, and so
a sequence of permutations $\pi_n\in S_n$ converges to a permuton $\nu$ if and only if $\tilde \nu_{\pi_{n}}$ converges weakly in probability to $\nu$.

The following theorem shows that the convergence of the permutation process implies the convergence of the permutons. This connection between the two types of convergence might give new insights into the permutation limit theory described in Section \ref{sec:graph_permutation_limit}.

\begin{thm}
Let $\pi_n\in S_n$ be a sequence of random permutations such that 
\begin{eqnarray}\pi_n(\cdot)\stackrel{w}{\Rightarrow}Z(\cdot).
\label{eq:convprocess}
\end{eqnarray}
Then $\left(\tilde \nu_{\pi_n},\pi_n(\cdot)\right)$ converges jointly weakly in distribution. In particular, there exists a (possibly random) measure $\mu\in\cM$, such that the permuton sequence $(\nu_{\pi_n})_{n\geq 1}$ converges in distribution to $\mu\in \cM$.	
\label{thm:cond}
\end{thm}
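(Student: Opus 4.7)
The plan is to use compactness of the permuton space together with a moment-matching argument. Since $\cM$ equipped with weak convergence is compact and metrizable, the sequence $(\tilde\nu_{\pi_n})$ is automatically tight as random elements of $\cM$; combined with the hypothesis $\pi_n(\cdot)\stackrel{w}{\Rightarrow}Z(\cdot)$, whose state space $[0,1]^{(0,1]}$ is compact by Tychonoff, this yields joint tightness of $(\tilde\nu_{\pi_n},\pi_n(\cdot))$. It therefore suffices to show that along any subsequence along which the joint law converges, the limit $(\mu,Z)$ has its joint law uniquely determined by the law of $Z$; full joint convergence then follows.

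To identify the limit, for any continuous $f:[0,1]^2\to\R$ I rewrite
$$\int f\,d\tilde\nu_{\pi_n}=\frac{1}{n}\sum_{i=1}^n f\!\left(\tfrac{i}{n},\tfrac{\pi_n(i)}{n}\right)=\int_0^1 f\!\left(\tfrac{\lceil nt\rceil}{n},\pi_n(t)\right)dt,$$
which by uniform continuity of $f$ differs from $\int_0^1 f(t,\pi_n(t))\,dt$ by a deterministic $o(1)$. Then I would identify the joint law of $(\int_0^1 f(t,\pi_n(t))\,dt,\,\pi_n(s_1),\ldots,\pi_n(s_r))$ by the method of moments: for any $m,r\in\N$ and continuous $g_1,\ldots,g_r:[0,1]\to\R$, Fubini's theorem gives
$$\E\!\left[\left(\int_0^1 f(t,\pi_n(t))\,dt\right)^{m}\prod_{j=1}^r g_j(\pi_n(s_j))\right]=\int_{[0,1]^m}\E\!\left[\prod_{l=1}^m f(t_l,\pi_n(t_l))\prod_{j=1}^r g_j(\pi_n(s_j))\right]d\boldsymbol{t}.$$
By the fdd convergence hypothesis and continuity of $f$ and the $g_j$'s, the integrand converges pointwise in $\boldsymbol{t}$ to its analogue with $Z$ replacing $\pi_n$; bounded convergence upgrades this to convergence of the whole integral. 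Thus the joint moments of $(\int f\,d\tilde\nu_{\pi_n},\pi_n(s_1),\ldots,\pi_n(s_r))$ converge, and since all variables are uniformly bounded, joint moment convergence forces joint weak convergence. As $f$, $s_j$, $g_j$, $m$, $r$ vary, the joint limit is uniquely characterized in terms of the law of $Z$.

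The claim for $\nu_{\pi_n}$ itself follows since $\|\nu_{\pi_n}-\tilde\nu_{\pi_n}\|_{\mathrm{KS}}\pto 0$ (as noted just before the theorem), so $\nu_{\pi_n}$ shares the limit $\mu$; moreover $\mu\in\cM$ since $\tilde\nu_{\pi_n}$ has exactly uniform marginals (because $\pi_n$ is a bijection on $[n]$), a property preserved under weak convergence. The main technical obstacle I foresee is measurability of the integrand in $\boldsymbol{t}$ to justify Fubini, but since $\pi_n$ is a finite step function of $t$, the integrand is piecewise constant in $\boldsymbol{t}$ and hence trivially measurable; a secondary point worth verifying is that joint moment convergence genuinely implies joint weak convergence in this setting, which follows from the moment problem being determinate on bounded support.
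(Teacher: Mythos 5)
Your proposal is correct and follows essentially the same route as the paper: compute joint moments of $\tilde\nu_{\pi_n}(f)$ and $g_j(\pi_n(s_j))$ by Fubini, pass to the limit via bounded/dominated convergence using the fdd convergence hypothesis, and transfer from $\tilde\nu_{\pi_n}$ to $\nu_{\pi_n}$ via the KS-distance remark. The only additions are your explicit tightness preamble and the observation that uniform marginals survive weak limits (so $\mu\in\cM$), both of which the paper handles implicitly by invoking compactness and closedness of $\cM$.
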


\begin{proof}Fix a continuous function $f: [0,1]^2\rightarrow [0, 1]$,  positive integers $a, b\ge 1$,  continuous functions $g_1, g_2, \cdots,g_b:[0,1]\mapsto \R$, and real numbers $s_1, s_2,\cdots, s_b \in (0,1]$.
The joint $a$-th moment of $\nu_{\pi_n}(f)$ and  $(g_1(\pi_n(s_1)), g_2(\pi_n(s_2)), \cdots, g_n(\pi_n(s_b)))$ is 
\begin{eqnarray}
\E\left(\tilde\nu_{\pi_n}^a(f)\prod_{j=1}^b g_j(\pi_n(s_j))\right)&=&\frac{1}{n^a}\E\left(
\sum_{i_1, i_2, \ldots, i_a\in [n]}\prod_{j=1}^af\left(\frac{i_j}{n}, \frac{\pi_n(i_j)}{n}\right)\prod_{j=1}^b g_a(\pi_n(s_j))\right)\nonumber\\
&=&\E\left(\prod_{j=1}^b g_j(\pi_n(s_j)) \int_{[0,1]^a}\prod_{j=1}^a f(x_j, \pi_n(x_j)) dx_j\right)+o(1),
\label{eq:proI}
\end{eqnarray}
where the last equality follows from the uniform continuity of $f$.

Now, by assumption (\ref{eq:convprocess}),
$$\prod_{j=1}^b g_j(\pi_n(s_j)) \prod_{j=1}^a f(x_j, \pi_n(x_j))\stackrel{\sD}{\rightarrow} \prod_{j=1}^b g_j(Z(s_j)) \prod_{j=1}^a f(x_j, Z(x_j)).$$
Therefore, by the Dominated Convergence Theorem 
\begin{align*}
\lim_{n\rightarrow \infty}\E\left(\prod_{j=1}^b g_j(\pi_n(s_j)) \prod_{j=1}^a f(x_j, \pi_n(x_j))\right)=\E\left(\prod_{j=1}^b g_j(Z(s_j)) \prod_{j=1}^a f(x_j, Z(x_j))\right).	
\end{align*}
The RHS above is measurable in $(x_1, x_2, \cdots, x_a)$, as it is the limit of measurable functions. Another  application of Dominated Convergence Theorem gives	
\begin{eqnarray}
\E\left(\tilde\nu_{\pi_n}^a(f)\prod_{j=1}^b g_j(\pi_n(s_j))\right)&=&\int_{[0,1]^a}\E\left(\prod_{j=1}^b g_j(\pi_n(s_j)) \prod_{j=1}^ a f(x_j, \pi_n(x_j))\right)\prod_{j=1}^a dx_j+o(1),\nonumber\\
&=&\int_{[0,1]^a}\E\left(\prod_{j=1}^b g_j(Z(s_j)) \prod_{j=1}^a f(x_j, Z(x_j))\right)\prod_{j=1}^a dx_j+o(1).
\label{eq:proII}
\end{eqnarray}
Since (\ref{eq:proII}) holds for all choices of $f$ and $g_1,\cdots, g_b$, the joint convergence in law of $(\tilde \nu_{\pi_n} ,\pi_n(\cdot))$ follows. Since the space $\cM$ is closed,  implies $\tilde \nu_{\pi_n}\dto \mu$, for some random measure $\mu \in \cM$.

Finally, since $||\nu_{\pi_n}-\tilde \nu_{\pi_n}||_{\mathrm{KS}}\rightarrow 0$ in probability, $(\nu_{\pi_n})_{n\geq 1}$ converges in distribution to the permuton $\mu \in \cM$.
\end{proof}

\subsection{Proofs of Theorem \ref{thm:degreejoint} and Corollary \ref{cor:independent}}
The above theorem can now be used to prove the convergence of the degree process.

\subsubsection{Proof of Theorem \ref{thm:degreejoint}}
\label{sec:thmain}
By Theorem \ref{thm:cond} we have $$(\tilde \nu_{\pi_n}, \pi_n(\cdot))\dto(\mu,Z(.)),$$
where $\mu\in \cM$ is a random measure such that $\tilde \nu_{\pi_n}\dto \mu$, and $Z(.)$ is a stochastic process on $(0,1]$ such that $\pi_n(\cdot) \dto Z(\cdot)$.  Thus, fixing $k\ge 1$ and $0<t_1<t_2<\cdots<t_k\le 1$ we have 
$$(\tilde\nu_{\pi_n},\pi_n(t_1),\cdots,\pi_n(t_k))\stackrel{\sD}{\rightarrow}(\mu,Z(t_1),\cdots,Z(t_k)).$$
Applying Skorohod's representation theorem on the separable metric space $\cM\times [0,1]^k$ (see Billingsley \cite[Theorem 6.7]{billingsley}), without loss of generality assume that the above convergence happens almost surely.

Now for any $t\in (0,1]$,
\begin{align}
a_n(t)=\frac{1}{n}\sum_{a=1}^{\lceil nt\rceil }\pmb 1\{\pi_n(a)>\pi_n(\lceil nt\rceil)\}=\tilde \nu_{\pi_{n}}\Big([0, \lceil nt\rceil /n]\times(\pi_n(t),1]\Big)=&\tilde\nu_{\pi_{n}}\Big([0, t]\times(\pi_n(t),1]\Big)+o(1)\nonumber\\
=&t-F_{{\nu}_{\pi_n}}(t,\pi_n(t))+o(1),
\label{eq:an}
\end{align}
where the last step uses $||\tilde{\nu}_{\pi_n}-\nu_{\pi_n}||_{\mathrm{KS}}=o(1)$.  By a similar argument,\begin{align}
b_n(t)=\pi_n(t)-F_{{\nu}_{\pi_n}}(t,\pi_n(t))+o(1). 
\label{eq:bn}
\end{align}
Combining (\ref{eq:an}) and (\ref{eq:bn}), for any $1\le i\le k$ we have
\begin{align*}
d_n(t_i)=a_n(t_i)+b_n(t_i)=&t_i+\pi_n(t_i)-2F_{{\nu}_{\pi_n}}(t_i,\pi_n(t_i))+o(1)\nonumber\\
=&t_i+\pi_n(t_i)-2F_\mu(t_i,\pi_n(t_i))+||\nu_{\pi_n}-\mu||_{\mathrm{KS}}+o(1)\\
\stackrel{a.s.}{\rightarrow}& t_i+Z(t_i)-2F_\mu(t_i, Z(t_i)), 
\end{align*}
where the last step uses $||\nu_{\pi_n}-\mu||_{\mathrm{KS}}\stackrel{a.s.}{\rightarrow}0$ (see Hoppen et al. \cite[Lemma 2.1]{permutation_limits}), and the fact that the function $F_\mu$ is continuous in each co-ordinate when the other co ordinate is held fixed. Indeed, this follows from the observation that any $\mu\in\cM$ has continuous marginals. 
Thus we have
$$(d_n(t_1),\cdots,d_n(t_k))\stackrel{a.s.}{\rightarrow}(t_1+Z(t_1)-2F_\mu(t_1,Z(t_1)),\cdots,t_k+Z(t_k)-2F_\mu(t_k,Z(t_k))),$$
from which finite dimensional convergence of $d_n(\cdot)$ follows.

\subsubsection{Proof of Corollary \ref{cor:independent}}
\label{sec:cormain}
%
%
From (\ref{eq:proI}) and (\ref{eq:proII}), it follows that for any continuous function $f:[0, 1]^2\rightarrow [0, 1]$ and $a\ge 1$,
$$\lim_{n\rightarrow\infty}\E\tilde{\nu}_{\pi_n}(f)^a=\int_{[0,1]^a}\E\left(\prod_{j=1}^a f(x_j, Z(x_j))\right)\prod_{j=1}^a dx_j.$$
Under the assumption of independence, the RHS above equals $\left(\int_0^1 \E f(x,Z(x))dx\right)^a$, 
which implies that $\tilde{\nu}_{\pi_n}(f)$ converges in probability to the non random quantity $$\int_0^1 \E f(x, Z(x))dx=\E_\mu f(X,Y),$$
where $(X,Y)\sim \mu$ is as required. Since this holds for all continuous functions $f$ the desired conclusion follows.

The independence of the finite dimensional marginals of $Z(\cdot)$, implies the same for the degree process $D(\cdot)$ by (\ref{eq:deg_conv}).

\subsection{A Dependent Degree Process}

Even though theorem \ref{thm:degreejoint} allows for $\mu$ to be random, in most examples in this paper $\mu$ turns out to be non-random and the corresponding degree process has independent finite dimensional distributions. In this section we construct a sequence of random permutations where the limiting permuton is random and the finite dimensional distributions of the degree process are not independent:

\begin{ex}
\label{dependent}
Suppose $W_n$ is a uniform random variable on $[n]:=\{1, 2, \ldots, n\}$, and $\pi_n\in S_n$ defined by \begin{equation}
\pi_n(i):=(i+W_n-1 \mod n)+1.
\label{eq:permcyclic}
\end{equation}
Note that $\pi_n$ is a cyclic shift of the identity permutation, where the length of the shift is chosen uniformly random. 
\end{ex}

\begin{ppn}Let $(\pi_n)_{n\geq 1}$ be a sequence of random permutations as defined in (\ref{eq:permcyclic}). Then the degree process 
\begin{align}
d_n(t)\stackrel{w}\Rightarrow D(t):=W\cdot \pmb 1\{W+t<1\}+(1-W)\cdot \pmb 1\{W+t\ge 1\},
\label{eq:degprocess}
\end{align}
where $W\sim \dU[0, 1]$.
\label{ppn:permcyclic}
\end{ppn}

\begin{proof}We will first show that the permutation process $\pi_n(\cdot)$ converges weakly in distribution. For $s \ge 1$ and let $g_1,g_2,\cdots, g_s$ be continuous functions on $[0,1]$. Then, 
$$\E g_1\Big(\pi_n(i_1)\Big)\cdots g_s\Big(\pi_n(i_s)\Big)\rightarrow \int_0^1 g_1(i_1+u\mod 1)\cdots g_s(i_s+u\mod 1)du.$$
Hence, $\pi_n(\cdot)\stackrel{w}{\Rightarrow}Z(\cdot)$, where $Z(\cdot)$ is a stochastic process defined by 
\begin{align}
Z(t)=W+t\mod 1, \quad \text{ with } \quad W\sim \dU[0,1]. 
\label{eq:permprocess}
\end{align}

By Theorem \ref{thm:degreejoint}  and by (\ref{eq:proI}) we have  $\nu_{\pi_n}\dto \mu$ such that for any continuous function $f$ on the unit square,
\begin{align*}
\E\mu(f)^a=\int_{[0,1]^a}\E \left( \prod_{j=1}^af(x_j, Z(x_j))\right) \prod_{j=1}^a dx_j=&\int_{[0,1]^{a+1}} \prod_{j=1}^af(x_j, \omega+x_j\mod 1)\prod_{j=1}^a dx_j d\omega\\
=&\int_{0}^{1} \left(\int_0^1 f(x,x+\omega \mod 1)dx\right)^a d\omega
\end{align*}
Therefore, the limiting measure $\mu$ is random and has the following law: For every $s\in [0,1]$, let $\kappa_s$ be the joint law of $(V, s+V\mod 1)$, where $V\sim \dU[0,1]$. Then $\mu=\kappa_W$ with  $W \sim [0,1]$.
		
To compute the limit of the degree process, we compute the distribution function of $\kappa_s$. In this case, with $U\sim \dU[0,1]$ and $0\le a,b\le 1$
\begin{align}
F_{\kappa_s}(a,b)=&\P(U\le a, U+s\mod 1\le b)\nonumber\\
=&\P(U\le a, U+s\le b)+\P(U\le a,1\le U+s\le b+1)\nonumber\\
=&\min(a,b-s)_++\min(a+s-1,b)_+,
\label{eq:FsigmaI}
\end{align}
which implies
\begin{align}
F_{\kappa_W}(t,Z(t))=&\min(t,(W+t\mod 1)-W)_++\min(t+W-1,W+t\mod 1)_+\nonumber\\
=&t\cdot \pmb 1\{W+t< 1\}+(t+W-1)\pmb 1\{W+t\ge 1\}.
\label{eq:FsigmaII}
\end{align}
Therefore, by Theorem (\ref{thm:degreejoint}) $d_n(\cdot)\stackrel{w}\Rightarrow D(\cdot)$ where: $W\sim U[0,1]$, and 
\begin{align}
D(t):=&t+(W+t\mod 1)-2F_{\kappa_W}(t, W+t\mod 1)\nonumber\\
=&W\cdot \pmb 1\{W+t<1\}+(1-W)\cdot \pmb 1\{W+t\ge 1\}.
\label{eq:degprocess}
\end{align}
In this case the finite dimensional distributions of the permutation process (\ref{eq:permprocess}) and the degree process (\ref{eq:degprocess}) are not independent, and the limiting permuton is random.
\end{proof}

\section{Uniformly Random Permutation Graph}
\label{sec:uniform}

Limiting properties of permutation statistics associated with a uniformly random permutation are widely studied. The number of edges $|E(G_{\pi_n})|$ is the number of inversions in the  permutation $\pi_n$. For a uniformly random permutation the distribution of the number of inversions is $\sum_{i=1}^{n}X_i$, where the random variables $X_i$ are independent and uniformly distributed over the set $\{0, 1,\ldots, i-1\}$, for every $i \in \{1, 2, \ldots, n\}$. Normal approximations to $|E(G_{\pi_n})|$ can be proved using these results and standard versions of the central limit theorem (refer to Fulman \cite{fulman_stein} for a proof using the method of exchangeable pairs). The largest clique in a permutation graph corresponds to the longest decreasing subsequence in the permutation. Similarly, an increasing subsequence in a permutation corresponds to an independent set of the same size in the corresponding permutation graph. Asymptotics for the maximum clique and the independent set in $G_{\pi_n}$ follow from the seminal work of Baik et al. \cite{baik} on the length of the longest increasing subsequence in a uniformly random permutation.

The convergence of the degree process of a uniformly random permutation graph (Corollary \ref{cor:degreejoint}) is a direct consequence of the general theorem. The empirical degree distribution can also be easily derived. 

\subsection{Proof of Corollary \ref{cor:deglimit}} When $\pi_n$ is a uniformly random permutation, the limiting permuton is $\nu= \dU (0, 1) \times \dU (0, 1)$ and the result follows from Proposition \ref{ppn:degree_nu} by direct substitution. 

To get the density of the limiting random variable, let $Z:=(1-U)V+U(1-V)$ where $U, V$ are independent $\dU[0,1]$. For $z\leq 1/2$, conditioning on $U$ the distribution function of $Z$ can be calculated as
$$\P(Z\leq z)=\int_{0}^z \frac{z-u}{1-2u}du+\int_{1-z}^1 \frac{z-(1-u)}{2u-1}du.$$
Simplifying and differentiating the above expression with respect to $z$ gives the desired density for $z\leq 1/2$. For $z> 1/2$, the density can be derived similarly. The density vanishes at the end points, and blows up to infinity at $z=1/2$.

%

\subsection{A Hypergeometric Estimate} For studying the degree sequence of a uniformly random permutation graph, properties of the random variables $a_n(i)$ and $b_n(i)$ (defined in Section \ref{sec:summary}) will be needed. To this end, recall the hypergeometric distribution: A non negative integer valued random variable $X$ is said to follow the {\it hypergeometric distribution} with parameters $(N,M,r)$ if  
$$\P(X=x)=\frac{{M\choose x}{N-M\choose r-x}}{{N\choose r}}, \text{ for } x\in [\max\{0, r+M-N\}, \min\{M, r\}]$$
where $N\geq \max \{M, r\}$.

Note that $(i-1)-a_n(i)+b_n(i)=\pi_n(i)-1$, thus giving the simple relation $b_n(i)-a_n(i)=\pi_n(i)-i$.  Using this relation, the following proposition gives a concentration result for $d_n(i)$ around its conditional mean given $\pi_n(i)$.

\begin{ppn}\label{hyper}
The conditional distribution of $a_n(i)|\{\pi_n(i)=j\}$ is hypergeometric with parameters $(n-1,i-1,n-j)$. Consequently, for $R>0$
$$\P\left(\left|d_n({i})-\frac{(i-1)(n-\pi_n(i))-(\pi_n(i)-1)(n-i)}{n-1}\right|>R\Big|\pi_n(i)=j\right)\le 2e^{-\frac{R^2}{2n}}.$$
\end{ppn}

\begin{proof}
Given $\pi_n(i)=j$ and $a_n(i)=a$, $b_n(i)=a+j-i=:b$, and so
$$\P(a_n(i)=a|\pi_n(i)=j)=\frac{(i-1)!(j-1)!(n-i)!(n-j)!}{a!(i-1-a)!b!(n-i-b)!(n-1)!}=\frac{{i-1\choose a}{(n-1)-(i-1)\choose (n-j)-a}}{{n-1\choose n-j}},$$
and so $a_n(i)$ follows the hypergeometric distribution with aforementioned parameters.

Therefore, $\E(d_n(i)|\pi_n(i)=j)=\E(a_n(i))+\E(b_n(i))=\frac{(i-1)(n-j)+(j-1)(n-i)}{n-1}$. To prove the second conclusion note that 
$$\left|d_n(i)-\frac{(i-1)(n-j)+(j-1)(n-i)}{n-1}\right|> R\Leftrightarrow \Big|a_n(i)-\frac{(i-1)(n-j)}{n-1}\Big|>\frac{R}{2}.$$
An application of the bound in \cite{hyper_conc} now gives the desired conclusion.
\end{proof}

\subsection{CLT for the Mid-Vertex: Proof of Theorem \ref{th:degreenormal}} 
\label{sec:midvertex}
Let $Z_n=\sqrt{n}\left(\frac{d_n(\ceil{n/2})}{n}-\frac{1}{2}\right)$. Now, fixing $\delta>0$
\begin{eqnarray}
\P(Z_n\le x)&=&\frac{1}{n}\sum_{j=1}^n\P(Z_n\le x|\pi_n(\ceil{n/2})=j)\nonumber\\
&=&\frac{1}{n}\sum_{n\delta\le j\le n(1-\delta)}\P(Z_n\le x|\pi_n(\ceil{n/2})=j)+O(\delta)\nonumber\\
&=&\frac{1}{n}\sum_{n\delta \le j\le n(1-\delta)}\P\left(\frac{a_n(\ceil{n/2})-\frac{(\ceil{n/2}-1)(n-j)}{n-1}}{n^{\frac{1}{2}}}\le \lambda_n(x, j)\Big|\pi_n(\ceil{n/2})=j\right)+O(\delta),\nonumber\\
\label{hyper1}
\end{eqnarray}
where $\lambda_n(x, j)$ satisfies $\lim_{n\rightarrow \infty}\max_{n\delta \le j\le n(1-\delta)}|\lambda_n(x, j)-x/2|=0,$ for all $x \in \R$.

By Proposition \ref{hyper},
\begin{eqnarray}
\sigma_n^2(\ceil{n/2}),j)&:=&\Var(a_n(\ceil{n/2})|\pi_n(\ceil{n/2}))=j)\nonumber\\
&=&\frac{(\ceil{n/2})-1)(j-1)(n-\ceil{n/2}))(n-j)}{(n-1)^2(n-2)}\ge C(\delta)n,
\end{eqnarray}
for some $C(\delta)>0$, and for all $j\in [n\delta, n(1-\delta)]$. Using the Berry-Esseen theorem for hypergeometric distribution \cite[Theorem 2.2]{lahiri_chatterjee}, there exists a universal constant $C$ such that with  $C'(\delta):=C/\sqrt{C(\delta)}<\infty$,
\begin{align}\label{hyper2}
\left|\P\left(\frac{a_n(\ceil{n/2})-\frac{(\ceil{n/2}-1)(n-j)}{n-1}}{n^{\frac{1}{2}}}\le \lambda_n(x, j) \Big|\pi_n(i)=j\right)-\Phi\left(\sqrt{n}\cdot \frac{\lambda_n(x, j)}{\sigma_n(\ceil{n/2},j)}\right)\right| \le \frac{C'(\delta)}{n^{\frac{1}{2}}}.
\end{align}
Finally, note that $$\max_{n\delta\le j\le n(1-\delta)}\left|\frac{\sigma_n^2(\ceil{n/2},j)}{n}-\frac{j(n-j)}{4n^2}\right|=o(1),$$
where the $o(1)$ term goes to zero as $n\rightarrow \infty$. Moreover, since the function $\Phi$ is uniformly continuous on $\R$,
\begin{align}\label{hyper3}
\max_{n\delta \le j\le n(1-\delta)}\left|\Phi\left(\sqrt{n}\cdot \frac{\lambda_n(x, j)}{\sigma_n(\ceil{n/2},j)}\right)-\Phi\left(\frac{x}{\sqrt{(j/n)(1-j/n)}}\right)\right|=o(1).
\end{align}

Combining (\ref{hyper1}), (\ref{hyper2}) and (\ref{hyper3}) we have
$$\P(Z_n\le x)=\frac{1}{n}\sum_{n\delta \le j\le n(1-\delta)}\Phi\left(\frac{x}{\sqrt{(j/n)(1-j/n)}}\right)+o(1)+O(\delta).$$
On taking limits as $n\rightarrow\infty$  followed by $\delta\rightarrow 0$ we have
$$\lim_{n\rightarrow\infty}\P(Z_n\le x)=\int_{0}^1\Phi\left(\frac{x}{\sqrt{u(1-u)}}\right)du,$$
which completes the proof of the theorem.

\section{Convergence of the Permutation Process}
\label{sec:equic}

The convergence of the permutation process requires some regularity assumptions. In this section we introduce the notion of equicontinuity for a sequence of random permutations, and verify this for most standard exponential models on permutations.

\subsection{Equicontinuous Permutations}  We begin by recalling few definitions: 

\begin{defn} Let  $\sF$ be a family of functions from $[0,1]$ to $\R$. The family $\sF$ is {\it equicontinuous at a point} $x_0 \in [0,1]$ if for every $\varepsilon > 0$, there exists a  $\delta > 0$ such that $$|f(x_0)- f(x)| < \varepsilon \text { for all } f\in \sF \text { and all } x  \text{ such that }|x-x_0| < \delta.$$ The family is {\it equicontinuous} if it is equicontinuous at each point of $[0,1]$.
\end{defn}

Denote by $\cC[0, 1]$ the set of all continuous functions from $[0, 1]$ to $\R$. Similar to the notion of equicontinuity of a class of functions, we introduce the following notion of equicontinuity of permutations. 

\begin{defn}
Let $a\geq 1$ and $g_1, g_2, \ldots, g_a\in \cC[0, 1]$. For any sequence $\pi_n\in S_n$ of random permutations, define the function $G_{g_1, g_2, \ldots, g_a}: [0, 1]^a\rightarrow \R$
\begin{equation}
G^{(n)}_{g_1, g_2, \ldots, g_a}(s_1, s_2,  \ldots, s_a)=\E\left(\prod_{i=1}^a g_i(\pi_n(s_i))\right).
\label{eq:G}
\end{equation}
A sequence $(\pi_n)_{n \geq 1}$ of random permutations is said to be {\it equicontinuous} if the family $\{G^{(n)}_{g_1, g_2, \ldots, g_a}\}_{n\geq 1}$ is equicontinuous for all $g_1, g_2, \ldots, g_a\in \cC[0, 1]$ and $a\ge 1$.
\end{defn}

\begin{ppn}\label{new_results}
Let $\pi_n\in S_n$ be a sequence of random equicontinuous permutations such that $\nu_{\pi_n}\dto \mu$. Then the permutation process 
$$\pi_n(\cdot)  \stackrel{w}  \Rightarrow Z(\cdot),$$ where the finite dimensional distribution of $Z(\cdot)$ is as follows: Let $(X_1, Y_1), (X_2, Y_2), \cdots (X_a, Y_a)$ be independent draws from the random measure  $\mu\in \cM$. Then
\begin{equation}
\sL(Z(s_1), Z(s_2), \cdots, Z(s_a))\sim \sL( Y_1 |X_1=s_1, Y_2|X_2=s_2, \cdots, Y_a |X_a=s_a),
\label{eq:Z}
\end{equation}
for $0<s_1<s_2 \cdots<s_a\le 1$.
\end{ppn}

\begin{proof}Fix $a\geq 1$. For every $n\geq 1$ define a collection of random variables $\{(U_{j ,n}, V_{j,n})\}^a_{j=1}$, where ${\{U_{j,n}\}}_{j=1}^a$ are i.i.d. $\dU[0,1]$ independent of $\pi_n$, and $V_{j,n}:=\pi_n(U_{j,n})$. Let $f_1,\cdots,f_a:[0, 1]^2 \rightarrow \R$ be continuous functions. Now, as in (\ref{eq:proI})
\begin{align}
\E \prod_{j=1}^a f_j(U_{j ,n}, V_{j,n})=&\int_{[0, 1]^a}\E\left(\prod_{j=1}^a f_j(x_j,\pi_n(x_j))\right)\prod_{j=1}^a dx_j=\E \prod_{j=1}^a \tilde{\nu}_{\pi_n}(f_j)+o(1),
\label{eq:uv}
\end{align}
Since $\tilde{\nu}_{\pi_n} \dto \mu$, the RHS of (\ref{eq:uv}) converges to $$\E \prod_{j=1}^a\mu(f_j)=\E_\mu\int_{[0, 1]^s}\prod_{j=1}^af_j(x_j,y_j) d\mu(x_j,y_j).$$
Thus, the joint law of $\{(U_{j ,n}, V_{j,n})\}_{j=1}^a$ converges to $\{(U_{j}, V_{j})\}_{j=1}^a$, where $\{U_j\}_{j=1}^a$ are i.i.d. $\dU[0,1]$ independent of $\mu$, and  given both $\{U_j=u_j\}_{j=1}^n$ and $\mu$,we have $\{V_j\}_{j=1}^a$ are mutually independent with $V_j\sim \mu(\cdot|u_j)$.

Observe that for $g_1, g_2, \ldots, g_a\in \cC[0, 1]$,
\begin{equation}
\E\left(\prod_{j=1}^a g_j(V_{j, n})\Big |U_{1,n}=s_1,\cdots, U_{a,n}=s_a\right)=\E \prod_{j=1}^a g_j(\pi_n(s_j))
\label{eq:1}
\end{equation}
and 
\begin{equation}
\E\left(\prod_{j=1}^a g_j(V_{j})\Big |U_{1}=s_1,\cdots, U_{a}=s_a\right)=\E_\mu\left(\int_{[0, 1]^a}\prod_{j=1}^a g_j(y_j) d\mu(y_j|s_j)\right),
\label{eq:2}
\end{equation}
To show that $\pi_n(\cdot)  \stackrel{w}  \Rightarrow Z(\cdot)$, it suffices to show that the RHS of (\ref{eq:1}) converges to the of RHS (\ref{eq:2}) are equal. Therefore, it suffices to prove that the corresponding conditional distributions converge:
$$\sL({\{V_{j, n}\}}_{j=1}^a| {\{U_{j,n}\}}_{j=1}^a)\dto \sL({\{V_{j}\}}_{j=1}^a| {\{U_{j}\}}_{j=1}^a)$$
This follows by the equicontinuity of $(\pi_n)_{n\geq 1}$ and an application of \cite[Theorem 4]{sweeting}, since $[0,1]^a$ is  compact.
\end{proof}

The following proposition gives a criterion for equicontinuity of permutations. This can be easily checked for many non-uniform models on permutations. To this end, let $\vec x=(x_1, x_2, \cdots, x_s)\in (0, 1]^s$ and define
\begin{equation}
\Omega(i_1, i_2, \cdots i_s, \vec x):=\{\pi_n\in S_n: \pi_n(\lceil nx_1\rceil)=i_1,\cdots,\pi_n(\lceil nx_s\rceil)=i_s\}.
\label{eq:omega}
\end{equation}

\begin{ppn}Let $\pi_n\in S_n$ be a sequence of random permutations. Suppose for all $s\geq 1$ and $1\leq i_1 < i_2 < \cdots < i_s \leq n$, the following holds:
\begin{align}
\lim_{\delta\rightarrow 0}\lim_{n\rightarrow \infty}\sup_{(\vec x, \vec y) \in B(\delta)}\left|\frac{\P(\pi_n\in \Omega(i_1, i_2, \cdots i_s, \vec x))}{\P(\pi_n \in \Omega(i_1, i_2, \cdots i_s, \vec y))}-1\right|=0,
\label{eq:ppn_comparison}
\end{align}
where $B_s(\delta)=\{x_1,\cdots,x_s, y_1,\cdots,y_s\in (0,1]:  |x_s-y_s|\le \delta\}$. Then the permutation sequence $(\pi_n)_{n\geq 1}$ is equicontinuous.
\label{ppn:comp}
\end{ppn}

\begin{proof}
Fix $\varepsilon>0$. Let $\vec x=(x_1, x_2, \cdots, x_s)\in (0, 1]^s$ and $\vec y=(y_1, y_2, \cdots, y_s)\in (0,1]^s$. Denote by $K_{n, s}$ the collection of $s$-tuples $i_1, i_2, \cdots,i_s$ which are all distinct. 

To show equicontinuity, fix $s\geq 1$ and $g_1, g_2, \ldots, g_s\in \cC[0, 1]$. Let $M:=\sup_{x_1, \cdots x_s\in (0, 1]}\prod_{j=1}^sg_j(x_j)$.  Now, using (\ref{eq:ppn_comparison})
\begin{eqnarray}
& &\sup_{\vec x, \vec y \in B_s(\delta)}|G^{(n)}_{g_1, g_2, \ldots, g_s}(\vec x)-G^{(n)}_{g_1, g_2, \ldots, g_s}(\vec y)|\nonumber\\
&\leq & \sup_{\vec x, \vec y \in B_s(\delta)} \sum_{(i_1, i_2, \cdots, i_s) \in K_{n, s}}\left|\prod_{j=1}^sg_j\left(\frac{i_j}{n}\right)\right|\cdot|\P(\pi_n\in \Omega(i_1, i_2, \cdots, i_s, \vec x))-\P(\pi_n\in \Omega(i_1, i_2, \cdots, i_s, \vec y))|\nonumber\\
&\le& M\varepsilon,
\end{eqnarray}
for $n$ large enough and appropriately chosen $\delta$. This implies that the sequence of permutations $(\pi_n)_{n\geq 1}$ is equicontinuous.
\end{proof}

\subsection{Exponential Models on Permutations}

Let $\theta \in \R$, and $T_n: S_n \rightarrow \R$ be any function. Suppose $\pi_n\in S_n$ is a sequence of random permutations with probability mass function
\begin{equation}
\frac{e^{\theta T_n(\sigma)}}{\sum_{\sigma \in S_n}e^{\theta T_n(\sigma)}}.
\label{eq:exp_model}
\end{equation}
One of the most common exponential models on permutations, is the Mallows model, where $T_n(\sigma)$ is the number of inversions of $\sigma$ scaled by $n$. The limiting degree process of the Mallows random permutation is explicitly computed in Section \ref{sec:mallows}. Here, we determine a simple criterion for the convergence of the degree process of a sequence of random permutations distributed as (\ref{eq:exp_model}). To this end, consider the following technical definition:

\begin{defn}For any two fixed vectors  $\vec x=(x_1, x_2, \cdots, x_s)\in (0, 1]^s$ and $\vec y=(y_1, y_2, \cdots, y_s)\in (0, 1]^s$, define the bijection
$$\Phi_{\vec x, \vec y}:\Omega(i_1, i_2, \cdots i_s, \vec x) \rightarrow \Omega(i_1, i_2, \cdots i_s, \vec y)$$ as follows: for each $\pi_n\in \Omega(i_1, i_2, \cdots i_s, \vec x)$, define its image $\tilde \pi_n$ as
\begin{equation}
\tilde \pi_n(k)=
\left\{
\begin{array}{ccc}
i_j =\pi_n(\ceil{n x_j})&  \text{ if } k= \ceil{ny_j}  \text{ for } j\in [s] \\
\pi_n(\ceil{n y_j}  & \text{ if }  k= \ceil{nx_j}  \text{ for } j\in [s]   \\
\pi_n(k)  &     \text{ otherwise.}
\end{array}
\right.
\label{eq:bijection}
\end{equation}
\end{defn}
\noindent Informally, $\Phi_{\vec x, \vec y}$ takes a permutation $\pi_n$ and interchanges the $\ceil{nx_1}, \ceil{nx_2}, \ldots \ceil {nx_s}$ coordinates to the $\ceil{ny_1}, \ceil{ny_2}, \ldots \ceil{ny_s}$ coordinates to get $\tilde \pi_n$. Using this bijection it is easy to get a sufficient condition for the convergence of the permutation process for exponential models.

\begin{cor}
A sequence of random permutations $(\pi_n)_{n\geq 1}$ from (\ref{eq:exp_model}) is equicontinuous whenever
\begin{equation}\lim_{\delta\rightarrow 0}\lim_{n\rightarrow \infty}\sup_{(\vec x, \vec y) \in B(\delta)}\sup_{\pi_n\in \Omega(i_1, \cdots, i_s, \vec x)}|T_n(\pi_n)-T_n(\Phi_{\vec x, \vec y}(\pi_n))|=0,
\label{eq:expcond}
\end{equation}
where $\Phi_{\vec x, \vec y}$ is the bijection  defined in (\ref{eq:bijection}).
\label{cor:expfamily}
\end{cor}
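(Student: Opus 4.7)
The plan is to derive the hypothesis of Proposition \ref{ppn:comp} directly from the exponential form (\ref{eq:exp_model}) and the modulus bound (\ref{eq:expcond}). The proof hinges on two simple facts: (i) the partition function $Z_n := \sum_{\sigma \in S_n} e^{\theta T_n(\sigma)}$ cancels in the ratio $\frac{\P(\pi_n \in \Omega(i_1,\ldots,i_s,\vec x))}{\P(\pi_n \in \Omega(i_1,\ldots,i_s,\vec y))}$ appearing in (\ref{eq:ppn_comparison}); and (ii) the explicit map $\Phi_{\vec x, \vec y}$ of (\ref{eq:bijection}) is a bijection from $\Omega(i_1,\ldots,i_s,\vec x)$ onto $\Omega(i_1,\ldots,i_s, \vec y)$, since it merely exchanges pairs of coordinates in a way that enforces $\tilde \pi_n(\lceil n y_j\rceil) = i_j = \pi_n(\lceil n x_j \rceil)$.

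I would reindex the sum defining the denominator via $\Phi_{\vec x, \vec y}$, obtaining
\begin{equation*}
\P(\pi_n \in \Omega(i_1,\ldots,i_s, \vec y)) = \frac{1}{Z_n}\sum_{\sigma \in \Omega(i_1,\ldots,i_s,\vec x)} e^{\theta T_n(\Phi_{\vec x, \vec y}(\sigma))},
\end{equation*}
while the numerator equals $\frac{1}{Z_n}\sum_{\sigma \in \Omega(i_1,\ldots,i_s,\vec x)} e^{\theta T_n(\sigma)}$. Setting
\begin{equation*}
\varepsilon_{n,\delta} := \sup_{(\vec x, \vec y) \in B_s(\delta)}\sup_{\sigma \in \Omega(i_1,\ldots,i_s,\vec x)} |T_n(\sigma) - T_n(\Phi_{\vec x, \vec y}(\sigma))|,
\end{equation*}
the hypothesis (\ref{eq:expcond}) amounts to $\lim_{\delta \downarrow 0}\limsup_{n\to\infty} \varepsilon_{n,\delta} = 0$. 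Term by term one has $e^{\theta T_n(\sigma)}/e^{\theta T_n(\Phi_{\vec x, \vec y}(\sigma))} \in [e^{-|\theta|\varepsilon_{n,\delta}}, e^{|\theta|\varepsilon_{n,\delta}}]$, and since all summands are positive, these bounds propagate to the ratio of sums:
\begin{equation*}
e^{-|\theta|\varepsilon_{n,\delta}} \le \frac{\P(\pi_n \in \Omega(i_1,\ldots,i_s,\vec x))}{\P(\pi_n \in \Omega(i_1,\ldots,i_s,\vec y))} \le e^{|\theta|\varepsilon_{n,\delta}}.
\end{equation*}

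Letting $n \to \infty$ and then $\delta \to 0$ forces the ratio to $1$ uniformly over $B_s(\delta)$ and over the tuples $(i_1, \ldots, i_s)$, which is exactly the criterion (\ref{eq:ppn_comparison}). Proposition \ref{ppn:comp} then delivers equicontinuity of $(\pi_n)_{n \ge 1}$. There is not really a main obstacle: the condition (\ref{eq:expcond}) was tailored so that a Lipschitz-type bound on $T_n$ translates, via the cancellation of $Z_n$ and the bijectivity of $\Phi_{\vec x, \vec y}$, into the multiplicative stability of the probabilities. The only mild subtlety is checking that $\Phi_{\vec x, \vec y}$ is genuinely well defined as a bijection for $\delta$ small and $n$ large, which follows because under a small perturbation the indices $\{\lceil n x_j\rceil\}_{j=1}^s$ and $\{\lceil n y_j\rceil\}_{j=1}^s$ remain separated and are naturally paired by $j$.
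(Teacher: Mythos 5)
Your proof is correct and follows essentially the same route as the paper: both cancel the partition function, use the bijection $\Phi_{\vec x, \vec y}$ to reindex, and appeal to Proposition \ref{ppn:comp}. You merely make explicit (via the termwise bound $e^{\theta T_n(\sigma)}/e^{\theta T_n(\Phi_{\vec x,\vec y}(\sigma))} \in [e^{-|\theta|\varepsilon_{n,\delta}}, e^{|\theta|\varepsilon_{n,\delta}}]$ and positivity of the summands) the ``for $n$ large enough and appropriately chosen $\delta$'' step that the paper leaves implicit.
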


\begin{proof}
Fix $\varepsilon >0$. Let $\tilde \pi_n$ be the image of $\pi_n$ defined by the bijection $\Phi_{\vec x, \vec y}$ in (\ref{eq:bijection}). Then using (\ref{eq:expcond})
\begin{align*}
\left|\frac{\P(\pi_n\in \Omega(i_1, i_2, \cdots, i_s, \vec x))}{\P(\pi_n\in \Omega(i_1, i_2, \cdots, i_s, \vec y))}-1\right|=&\left|\frac{\sum_{\pi_n\in \Omega(i_1, i_2, \cdots, i_s, \vec x)}Q_{n, \theta}(\pi_n)}{\sum_{\tilde \pi_n\in \Omega(i_1, i_2, \cdots, i_s, \vec y)}Q_{n, \theta}(\tilde \pi_n)}-1\right|< \varepsilon,
\end{align*}
for $n$ large enough and appropriately chosen $\delta$. Equicontinuity follows from Proposition \ref{ppn:comp}.
\end{proof}

Consider the following general class of 1-parameter exponential family on the space of permutations $S_n$, with probability mass function
\begin{equation}
Q_{n, f, \theta}(\sigma)=\frac{e^{\theta \sum_{i=1}^nf\left(\frac{i}{n}, \frac{\sigma(i)}{n}\right)}}{\sum_{\sigma \in S_n} e^{\theta \sum_{i=1}^n f\left(\frac{i}{n}, \frac{\sigma(i)}{n}\right)}},
\label{eq:exp_f}
\end{equation}
where $f:[0, 1]^2\rightarrow [0, 1]$ is any continuous function.  This is a special case of the model (\ref{eq:exp_model}) with $T_n(\sigma)=\sum_{i=1}^nf\left(\frac{i}{n}, \frac{\sigma(i)}{n}\right)$. Popular choices of the function $f$ includes the {\it Spearman's Rank Correlation Model}: $f(x,y)=-(x-y)^2$ and the {\it Spearman's Footrule Model}:  $f(x,y)=-|x-y|$. These models find applications in statistics for analyzing ranked data \cite{diaconis_group,perm_statistics}. Feigin and Cohen \cite{feigin_cohen} gave a nice application of such models for analyzing agreement between several judges in a contest. For other choices of $f$ and their various properties refer to Diaconis \cite{diaconis_group}. Consistent estimation of parameters in such models has been studied recently by Mukherjee \cite{sm}.

Using Corollary \ref{cor:expfamily} it can easily shown that any sequence of random permutations $(\pi_n)_{n\geq 1}$ distributed as (\ref{eq:exp_f}) is equicontinuous, that is, their corresponding degree process converges:

\begin{cor}Fix $\theta \in \R$ and $f:[0, 1]^2\rightarrow [0, 1]$ be any continuous function. Let $\pi\in S_n$ be a sequence of random permutations distributed as (\ref{eq:exp_f}). Then $(\pi_n)_{n\geq 1}$ is equicontinuous and the degree process $d_n(\cdot)$ converges.
\label{cor:f}
\end{cor}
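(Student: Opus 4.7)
The plan is to apply Corollary \ref{cor:expfamily}, reducing equicontinuity to a single bound on $T_n(\sigma) = \sum_{i=1}^n f(i/n, \sigma(i)/n)$, which will follow from uniform continuity of $f$ on the compact square $[0,1]^2$. Convergence of the degree process then follows from Proposition \ref{new_results} and Theorem \ref{thm:degreejoint}, after passing to a subsequence along which the permuton converges (such a subsequence always exists since $\cM$ is compact under weak convergence).

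Fix $\vec x, \vec y \in B_s(\delta)$, set $\tilde\pi_n := \Phi_{\vec x, \vec y}(\pi_n)$, and take $n$ large enough that the $2s$ positions $\lceil nx_j \rceil$ and $\lceil ny_j \rceil$ ($j \in [s]$) are all distinct. Then $\pi_n$ and $\tilde\pi_n$ agree outside these positions, and writing $p_j := \pi_n(\lceil ny_j \rceil)$, the bijection gives $\pi_n(\lceil nx_j \rceil) = \tilde\pi_n(\lceil ny_j \rceil) = i_j$ and $\tilde\pi_n(\lceil nx_j \rceil) = p_j$. A direct expansion then yields
\begin{equation*}
T_n(\pi_n) - T_n(\tilde\pi_n) = \sum_{j=1}^s \Bigl[f\bigl(\tfrac{\lceil nx_j\rceil}{n}, \tfrac{i_j}{n}\bigr) - f\bigl(\tfrac{\lceil ny_j\rceil}{n}, \tfrac{i_j}{n}\bigr) + f\bigl(\tfrac{\lceil ny_j\rceil}{n}, \tfrac{p_j}{n}\bigr) - f\bigl(\tfrac{\lceil nx_j\rceil}{n}, \tfrac{p_j}{n}\bigr)\Bigr].
\end{equation*}
In each bracketed pair only the first coordinate changes, and $|\lceil nx_j \rceil - \lceil ny_j \rceil|/n \leq \delta + 1/n$. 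Letting $\omega_f$ denote the modulus of continuity of $f$, uniformly over $\pi_n \in \Omega(i_1, \ldots, i_s, \vec x)$ this bound gives
\begin{equation*}
|T_n(\pi_n) - T_n(\tilde\pi_n)| \leq 2s \cdot \omega_f(\delta + 1/n),
\end{equation*}
which vanishes as $n \to \infty$ and then $\delta \to 0$. This verifies (\ref{eq:expcond}), so Corollary \ref{cor:expfamily} yields equicontinuity of $(\pi_n)_{n \geq 1}$.

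For the degree process, compactness of $\cM$ ensures that every subsequence of $(\nu_{\pi_n})$ admits a further subsequence along which $\nu_{\pi_n} \dto \mu$ for some (possibly random) $\mu \in \cM$. Along any such subsequence, equicontinuity together with Proposition \ref{new_results} gives $\pi_n(\cdot) \stackrel{w}{\Rightarrow} Z(\cdot)$, and Theorem \ref{thm:degreejoint} then delivers the convergence of $d_n(\cdot)$. There is no genuine obstacle in this argument; the only substantive step is the uniform-continuity bound, and the minor subtleties are purely bookkeeping (ensuring the $2s$ affected positions are distinct for large $n$, and interpreting ``convergence'' of $d_n(\cdot)$ in the subsequential sense forced by possible non-uniqueness of the limiting permuton).
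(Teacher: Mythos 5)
Your equicontinuity argument is essentially identical to the paper's: you verify \eqref{eq:expcond} by writing $T_n(\pi_n)-T_n(\tilde\pi_n)$ as a telescoping sum over the $s$ swapped positions, observe that each term perturbs only the first coordinate of $f$ by at most $\delta+1/n$, and conclude via uniform continuity. The paper bounds $|T_n(\pi_n)-T_n(\tilde\pi_n)|$ by $2\sum_{j=1}^s$ of one representative term rather than spelling out the exact four-term cancellation as you do, but the content is the same.

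Where you differ from the paper is in the second half. The paper simply says that convergence of the degree process ``follows from the equicontinuity (Proposition \ref{new_results})'', but Proposition \ref{new_results} has \emph{two} hypotheses: equicontinuity \emph{and} $\nu_{\pi_n}\dto\mu$. You correctly flag that only the first is established by the argument given, and you patch this by invoking compactness of $\cM$ to extract a convergent subsequence of permutons. That is the logically honest reading: without a separate argument (e.g.\ a variational characterization and uniqueness of the maximizer, as is available for the Mallows model via Starr's theorem) one only gets subsequential convergence of $d_n(\cdot)$. For a general continuous $f$ and $\theta\in\R$ the model \eqref{eq:exp_f} could in principle exhibit a phase transition with non-unique limiting permutons, in which case full convergence of $d_n(\cdot)$ would genuinely fail. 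Your caveat is therefore not mere bookkeeping — it identifies a real gap in the paper's terse one-line proof of this part of the corollary, and your subsequential formulation is the strongest conclusion that actually follows from the stated hypotheses.
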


\begin{proof} The convergence of the degree process follows from the equicontinuity (Proposition \ref{new_results}). To prove equicontinuity, let $T_n(\sigma)=\sum_{i=1}^n f(i/n,\sigma(i)/n)$. Fix $\delta>0$ and let $(\vec x, \vec y) \in B(\delta)$. Using the bijection (\ref{eq:bijection}) we get
\begin{align}
|T_n(\pi_n)-T_n(\tilde \pi_n)|&\le 2\sum_{j=1}^s\left|f\left(\frac{\ceil{n x_j}}{n}, \frac{\pi_n(\ceil{n x_j})}{n}\right)-f\left(\frac{\ceil{n y_j}}{n}, \frac{\pi_n(\ceil{n x_j})}{n}\right)\right|\nonumber\\
&\leq 2s\sup_{\substack{|x_1-x_2|\le \delta+\frac{1}{n},\\ y\in [0,1]}}|f(x_1,y)-f(x_2,y)|,
\end{align}
which goes to 0, after taking $n\rightarrow \infty$ and $\delta\rightarrow 0$, by the continuity of $f$.
\end{proof}

\section{Mallows Random Permutation}
\label{sec:mallows}

For $\beta\in\R$ and $n\geq 1$, the Mallows measure over permutations $\pi_n\in S_n$ is given by the probability mass function
$$m_{\beta, n}(\sigma):=\frac{e^{-\beta\cdot\frac{\lambda(\sigma)}{n}}}{\sum_{\sigma\in S_n}e^{-\beta\cdot\frac{\lambda(\sigma)}{n}}},$$
where $\lambda(\sigma)=|\{(i, j): (i-j)(\pi_n^{-1}(i)-\pi_n^{-1}(j))<0\}|$ is the number of inversions of the permutation $\sigma$. Diaconis and Ram \cite{diaconis_ram} studied a Markov chain on $S_n$ for which the Mallows model gives the limiting distribution.  
Tail bounds for the displacement of an element in a Mallows permutation was studied by Braverman and Mossel \cite{baverman_mossel}. Recently,  Mueller and Starr \cite{mueller_starr} and later Bhatnagar and Peled \cite{bhatnagar_peled} studied the length of the longest increasing subsequence in a Mallows permutation. Consistent estimation of parameters in exponential models on permutations has been studied by Mukherjee \cite{sm}.

The uniform random permutation corresponds to the case $\beta=0$. It is well known that for $\pi_n\in S_n$ chosen uniformly at random, $\nu_{\pi_n}$ converges weakly in probability to $\dU(0, 1)$. This was generalized to Mallow random permutations by Starr \cite{starr}.

\begin{thm}[Starr \cite{starr}] 
Let $\pi_n\in S_n$ be a Mallows random permutation with parameter $\beta$. Then the empirical permutation measure $\tilde \nu_{\pi_n}$ converges weakly in probability to a random variable which has density in $[0, 1]^2$ given by
\begin{equation}
m_\beta(x, y):=\frac{\beta  \sinh \left(\frac{\beta }{2}\right)}{2 \left(\exp \left(\frac{\beta }{4}\right) \cosh \left(\frac{1}{2} \beta  (x-y)\right)-\exp \left(-\frac{\beta }{4}\right) \cosh \left(\frac{1}{2} \beta  (x+y-1)\right)\right)^2},
\label{eq:density_mallows}
\end{equation}
and distribution function 
\begin{equation}
M_{\beta}(a, b)=-\frac{1}{\beta} \log \left(\frac{2 \exp \left(-\frac{1}{2}\beta(a+b-1)\right)\left(\sinh \left(\frac{a \beta }{2}\right) \sinh \left(\frac{\beta  b}{2}\right)\right)}{\sinh \left(\frac{\beta }{2}\right)}-1\right).
\label{eq:distribution_mallows}
\end{equation}
\label{th:starr}
\end{thm}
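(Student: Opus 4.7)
The plan is to establish weak convergence in probability of $\tilde\nu_{\pi_n}$ to the (deterministic) measure with density $m_\beta$ via the second moment method: for any continuous test function $f : [0,1]^2 \to \R$, I will show that $\E \tilde\nu_{\pi_n}(f) \to \int f \cdot m_\beta\, dx\, dy$ and $\Var(\tilde\nu_{\pi_n}(f)) \to 0$. The distribution function $M_\beta(a,b)$ then arises by integrating $m_\beta$ over $[0,a]\times[0,b]$ (approximating indicators by continuous functions), and the resulting double integral can be reduced to the stated closed form via the hyperbolic identities $\sinh x\,\sinh y = \tfrac{1}{2}[\cosh(x+y)-\cosh(x-y)]$ and $\cosh x-\cosh y = 2\sinh\tfrac{x+y}{2}\sinh\tfrac{x-y}{2}$.

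For the first moment I would work via partition function asymptotics. Writing $q = e^{-\beta/n}$, the classical identity gives the normalizing constant
$$Z_n(q) := \sum_{\sigma \in S_n} q^{\lambda(\sigma)} = \prod_{k=1}^n \frac{1-q^k}{1-q},$$
and hence $m_{\beta,n}\{\pi_n(i)=j\} = Z_n(i,j)/Z_n(q)$, where $Z_n(i,j)$ is the restricted partition function over permutations with $\sigma(i)=j$. The inversion count $\lambda(\sigma)$ decomposes around the fixed pair $(i,j)$ into contributions from each of the four quadrants determined by $(i,j)$ plus the cross inversions involving the fixed element; this causes $Z_n(i,j)$ to factor as a product of two smaller $q$-Pochhammer type partition functions together with an explicit local cross-term. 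Stirling-type asymptotics of $q$-Pochhammer symbols at $q=e^{-\beta/n}$ with $i = \lceil nx\rceil$ and $j = \lceil ny\rceil$ should then yield $n^2 \cdot m_{\beta,n}\{\pi_n(i)=j\} \to m_\beta(x,y)$, with the $\sinh^2$ structure of the denominator of $m_\beta$ arising from the two $q$-Pochhammer factors.

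For the variance, the key task is to show that the two-point probability $m_{\beta,n}\{\pi_n(i)=j,\,\pi_n(i')=j'\}$ asymptotically factorizes as the product of the one-point probabilities. Repeating the quadrant decomposition for two fixed points gives a nine-region factorization of $Z_n(i,j;i',j')$, and a careful matching of the leading-order asymptotics of these $q$-products against the square of the one-point formula produces the required cancellation. Summing the factorized two-point probabilities against $f(x_1,y_1)f(x_2,y_2)$ and subtracting the square of the first moment then gives $\Var(\tilde\nu_{\pi_n}(f)) = o(1)$.

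The main obstacle is handling pairs with $|i-i'|$ small, where the nine-region decomposition degenerates and its Stirling expansion ceases to be valid. This anomalous short-range regime can be absorbed by a direct combinatorial bound: conditional on the remaining values, each $\pi_n(i)$ has a truncated geometric distribution with range of order $n/|\beta|$, so the number of close pairs contributing meaningful correlation is $O(n)$ (out of $n^2$ total) and each contributes $O(1/n^2)$ to the double sum. This confines the anomalous pairs to an $O(1/n)$ error in the variance, completing the argument.
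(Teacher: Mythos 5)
The paper does not actually prove the density formula \eqref{eq:density_mallows}; it simply cites Starr \cite{starr} for that result and observes that the distribution function \eqref{eq:distribution_mallows} follows by integrating $m_\beta$. You, by contrast, attempt a from-scratch proof of the density itself, so your proposal is not comparable to the paper's proof — it is an independent (and much more ambitious) argument, and it has a genuine gap.

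The critical step in your proposal is the claim that, upon fixing $\sigma(i)=j$, the inversion count decomposes so that the restricted partition function $Z_n(i,j)$ "factors as a product of two smaller $q$-Pochhammer type partition functions together with an explicit local cross-term." This is not correct. Writing
\begin{equation*}
\lambda(\sigma)=\lambda_{\mathrm{left}}+\lambda_{\mathrm{right}}+\lambda_{\mathrm{cross}}+\lambda_{i},
\end{equation*}
where $\lambda_{\mathrm{left}}$ and $\lambda_{\mathrm{right}}$ count inversions within positions $\{1,\dots,i-1\}$ and $\{i+1,\dots,n\}$ respectively, $\lambda_{\mathrm{cross}}$ counts inversions between the two blocks, and $\lambda_{i}$ counts inversions involving position $i$, the cross term $\lambda_{\mathrm{cross}}$ depends on which \emph{set} of values occupies the left block, not merely on $j$. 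Conditioning on $\sigma(i)=j$ does not determine that set, so $\lambda_{\mathrm{cross}}$ is a genuine random variable correlated with both $\lambda_{\mathrm{left}}$ and $\lambda_{\mathrm{right}}$, and the sum over permutations does not split into a product of $q$-Pochhammer symbols. (The Mallows measure does decouple when one conditions on the set $\{\sigma(1),\dots,\sigma(k)\}$, but that is a different conditioning and does not yield $\P(\sigma(i)=j)$ directly.) Consequently the "Stirling-type asymptotics of $q$-Pochhammer symbols" you propose for the one-point marginal, and a fortiori the nine-region factorization for the two-point marginal, do not get off the ground. Starr's actual argument proceeds quite differently — via a transfer-operator / large-deviations analysis of the empirical measure exploiting the sequential (truncated-geometric) construction of the Mallows permutation — and none of that structure is recoverable from the quadrant factorization you describe. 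The second-moment \emph{scheme} is reasonable in outline, and the final reduction of the double integral of $m_\beta$ to the closed form $M_\beta$ via hyperbolic identities is sound, but the core asymptotic computation is unsupported.
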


\begin{proof}The proof of (\ref{eq:density_mallows}) can be found in Starr \cite{starr}. The expression for the distribution function (\ref{eq:distribution_mallows}) follows by directly integrating the density $m_\beta$ in (\ref{eq:density_mallows}).
\end{proof}

By the above theorem, for $\beta \in \R$ and a sequence of permutations $\pi_n\sim M_{\beta, n}$, the empirical permutation measure $\tilde \nu_{\pi_n}\dto M_{\beta}$. This together with Theorem \ref{thm:degreejoint} can be used to compute the limiting density of the degree proportion in a Mallows random permutation. 


\subsection{Proof of Theorem \ref{th:mallowsdegreejoint}}

The proof of Theorem \ref{th:mallowsdegreejoint} has two parts: to show the existence of limit of the degree process $d_n(\cdot)$ by verifying  (\ref{eq:expcond}) in Corollary \ref{cor:expfamily}, and the explicit computation of the density of the limiting distribution using Theorem \ref{th:starr}.

\subsubsection{Existence of the Limit}

In this section we show that the degree process of a Mallows random permutation converges. In light of Proposition \ref{new_results} it suffices to verify that the Mallows random permutation is equicontinuous:

\begin{lem}Let $\beta \in \R$ and $\pi_n\sim M_{\beta, n}$ be a sequence Mallows random permutations. Then $(\pi_n)_{n\geq 1}$ is equicontinuous.
\end{lem}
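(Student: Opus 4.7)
Apply Corollary \ref{cor:expfamily} with $\theta=-\beta$ and $T_n(\sigma)=\lambda(\sigma)/n$: it suffices to prove
\[
\sup_{\pi_n\in\Omega(i_1,\ldots,i_s,\vec x)}\bigl|\lambda(\pi_n)-\lambda(\Phi_{\vec x,\vec y}(\pi_n))\bigr|=o(n)
\]
as $n\to\infty$, uniformly over $(\vec x,\vec y)\in B_s(\delta)$ (and then $\delta\to 0$). The first key observation is that the bijection $\Phi_{\vec x,\vec y}$ is right-composition with a product of disjoint transpositions $\tau=\prod_{j=1}^s(p_j,q_j)$, where $p_j=\lceil nx_j\rceil$ and $q_j=\lceil ny_j\rceil$; for $\vec x,\vec y$ in a dense open subset of $B_s(\delta)$ and $n$ sufficiently large these $2s$ positions are distinct, and the composition is well-defined. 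Applying the transpositions one at a time, the problem reduces to the deterministic per-transposition estimate
\[
\bigl|\lambda(\pi)-\lambda(\pi\circ(p,q))\bigr|\le 2|q-p|\qquad\text{for all }\pi\in S_n,\;1\le p<q\le n.
\]

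To verify this estimate, set $u=\pi(p)$ and $v=\pi(q)$. Only inversion pairs involving position $p$ or position $q$ can change under the swap, and the status of a pair $(k,p)$ with $k\notin\{p,q\}$ flips precisely when $\pi(k)$ lies strictly between $u$ and $v$; the same criterion governs $(k,q)$. A short sign analysis splits on the location of $k$ relative to $\{p,q\}$: for $k<p$ or $k>q$ the induced flips of $(k,p)$ and $(k,q)$ are in opposite directions and cancel, while for $p<k<q$ they are in the same direction and together contribute $\pm 2$. Combined with the pair $(p,q)$ itself, whose inversion status always flips and contributes $\pm 1$, the net change is at most $2(q-p-1)+1\le 2(q-p)$.

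Summing the per-step bound over the $s$ transpositions of $\tau$ and using $|p_j-q_j|\le n\delta+1$ on $B_s(\delta)$ yields the deterministic bound $|\lambda(\pi_n)-\lambda(\Phi_{\vec x,\vec y}(\pi_n))|\le 2s(n\delta+1)$; dividing by $n$ gives $2s\delta+O(1/n)$, which vanishes in the iterated limit $n\to\infty$ followed by $\delta\to 0$, as required by Corollary \ref{cor:expfamily}. The expected mild obstacle is the disjointness of $\tau$ when $\lceil nx_j\rceil$ coincides with $\lceil ny_{j'}\rceil$ for some $j\ne j'$; this can be absorbed by noting that such coincidences form a nowhere-dense subset of $B_s(\delta)$ and perturbing, or handled directly by factoring the resulting permutation into transpositions and applying the same per-transposition bound to each factor, the total displacement summing to $O(sn\delta)$ either way.
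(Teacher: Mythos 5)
Your proof takes the same route as the paper: invoke Corollary \ref{cor:expfamily} with $T_n(\sigma)=\lambda(\sigma)/n$ and bound $|\lambda(\pi_n)-\lambda(\Phi_{\vec x,\vec y}(\pi_n))|$ by a quantity of order $sn\delta$, so that dividing by $n$ and sending $n\to\infty$, $\delta\to 0$ gives equicontinuity. Your per-transposition estimate $|\lambda(\pi)-\lambda(\pi\circ(p,q))|\le 2|q-p|$ is in fact the correct one (it is sharp, e.g.\ for $\pi=\mathrm{id}$ in $S_3$ with $(p,q)=(1,3)$ the change is $3=2\cdot 2-1$), whereas the paper's displayed inequality omits the factor of $2$; this is immaterial for the conclusion, but your derivation is the careful version, and your remark on resolving coincidences among the positions $\lceil nx_j\rceil,\lceil ny_j\rceil$ addresses a detail the paper leaves implicit.
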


\begin{proof} Fix $\varepsilon>0$ and $1\leq i_1<i_2\cdots i_s\leq n$. Let $\vec x=(x_1, x_2, \cdots, x_s)\in (0, 1]^s$ and $\vec y=(y_1, y_2, \cdots, y_s)\in (0,1]^s$ and consider the bijection (\ref{eq:bijection}) between $\Omega(i_1, i_2, \ldots, i_s, \vec x)$ and $\Omega(i_1, i_2, \ldots, i_s, \vec y)$. If $\tilde \pi_n$ denotes the image of $\pi_n$ under this bijection, then $$\frac{1}{n}|\lambda(\pi_n)-\lambda (\tilde \pi_n)|\le \frac{1}{n}\sum_{a=1}^s|\lceil nx_a\rceil -\lceil ny_a\rceil|\le s \delta +\frac{2s}{n},$$
which goes to zero after taking limits as $n\rightarrow\infty$ and $\delta\rightarrow 0$.  Equicontinuity of $(\pi_n)_{n\geq 1}$ now follows from Corollary \ref{cor:expfamily}.
\end{proof}

The above result and Proposition \ref{new_results} implies that the permutation process $\pi_n(\cdot)\stackrel{w}\Rightarrow W_\beta(\cdot)$, such that for every $t\geq 0$, $W_\beta(t)$ is independent and distributed according to conditional law of $Q_1|Q_2=t$, where $(Q_1, Q_2)\sim M_\beta$.  Since the distribution of $M_\beta$ of $(Q_1, Q_2)$ has uniform marginals,  
\begin{equation}
\P(W_\beta(t)\leq w)=\int_{0}^w m_{\beta}(x, t)dx,
\label{eq:W}
\end{equation}
and $W_\beta(t)$ has density $m_{\beta}(\cdot, t)$. Theorem \ref{thm:degreejoint} then implies that $d_n(\cdot)\stackrel{w}\Rightarrow D_\beta(\cdot)$, where $$D_\beta(t)=t+W_\beta(t)-2M_\beta(t, W_\beta(t)),$$
and $D_\beta(t)$ is independent for all $t\geq 0$. Therefore, for indices $0\leq  r_1<r_2< \cdots <r_s\leq 1$,
$$\left(\frac{d_n(\ceil{nr_1})}{n}, \frac{d_n(\ceil{nr_2})}{n}, \ldots, \frac{d_n(\ceil{nr_s})}{n}\right) \stackrel{\sD}{\rightarrow}(D_{1, \beta}, D_{2, \beta}, \ldots D_{s, \beta}),$$
where $D_{a, \beta}=D_\beta(a)$, for $a\in [0,1]$. 

\subsubsection{Calculating the Limiting Density} Fix $\beta \in \R$ and $a\in [0, 1]$ and suppose $W\sim W_\beta(a)$ be distributed as in (\ref{eq:W}). To find the density of $D_{a, \beta}$ for $a\in [0,1]$, we have to find the density of the random variable
\begin{eqnarray}
J_{a, \beta}(W)&:=&a+W-2 M_\beta(a, W)\nonumber\\
&=&a+W+\frac{2}{\beta} \log \left(\frac{2 \exp \left(-\frac{1}{2}\beta(a+W-1)\right)\left(\sinh \left(\frac{\beta a}{2}\right) \sinh \left(\frac{\beta  W}{2}\right)\right)}{\sinh \left(\frac{\beta }{2}\right)}-1\right) \nonumber\\
&=&a+W+\frac{2}{\beta} \log \left(\varphi_\beta(a)\left(1-e^{-\beta W}\right)-1\right)\nonumber, 
\label{eq:J1}
\end{eqnarray}
where $$\varphi_\beta(a):=e^{\frac{1}{2}(\beta-\beta a)} \text{csch}\left(\frac{\beta }{2}\right) \sinh \left(\frac{a \beta }{2}\right).$$

We begin by establishing properties of the function $J_{a,\beta}: \R \to\R$ defined as $J_{a,\beta}(w)=a+w-2 M_\beta(a, w)$, for $a\in [0, 1]$. Recall that an interval $[a, b]$ is always interpreted as $[a\vee b, a\wedge b]$

\begin{lem}Let $\beta\geq 0$, $a\in [0, 1]$, and $a_c(\beta)$ be as defined in Theorem \ref{th:mallowsdegreejoint}.  Then for $J_{a,\beta}$ as defined above, the following hold:
\begin{itemize}
\item[{\it (a)}]The function $J_{a,\beta}$ is strictly convex in $\R$ for $\beta> 0$ and linear for $\beta=0$.

\item[{\it (b)}]For $a\in [0, a_c(\beta)]$ the function $J_{a,\beta}$ is strictly increasing and for $a\in [1-a_c(\beta), 1]$, the function $J_{a,\beta}$ is strictly decreasing in $[0, 1]$. 

\item[{\it(c)}]For $a\in (a_c(\beta), 1-a_c(\beta))$, the function $J_{a,\beta}$ has a minimum at $z_0\in (0, 1)$, and is strictly decreasing in $[0, z_0)$ and strictly increasing in $(z_0, 1]$.

\end{itemize}
\label{eq:jprop}
\end{lem}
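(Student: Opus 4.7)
My plan is to analyze the smooth function $J_{a,\beta}$ through its first two derivatives. Writing $\varphi := \varphi_\beta(a) = e^{\beta(1-a)/2}\sinh(\beta a/2)/\sinh(\beta/2)$ and $h(w) := 1 - \varphi(1-e^{-\beta w})$, the algebraic simplification already worked out in the proof of Theorem~\ref{th:mallowsdegreejoint} identifies
\[
J_{a,\beta}(w) = a + w + \frac{2}{\beta}\log h(w).
\]
Using the equivalent form $\varphi_\beta(a) = (e^{\beta/2} - e^{\beta(1/2-a)})/(e^{\beta/2} - e^{-\beta/2})$, one sees immediately that $\varphi$ is strictly increasing in $a \in [0,1]$ with $\varphi_\beta(0) = 0$ and $\varphi_\beta(1) = 1$; in particular $\varphi \in (0,1)$ for $a \in (0,1)$, and hence $h(w) > 0$ on $[0,1]$.

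A direct computation yields
\[
J'_{a,\beta}(w) = 1 - \frac{2\varphi e^{-\beta w}}{h(w)},\qquad J''_{a,\beta}(w) = \frac{2\beta\varphi(1-\varphi)e^{-\beta w}}{h(w)^2},
\]
where the second identity uses the telescoping $h(w) - \varphi e^{-\beta w} = 1 - \varphi$. For $\beta > 0$ and $a \in (0,1)$, the factor $\varphi(1-\varphi)$ is strictly positive, so $J''_{a,\beta}(w) > 0$ throughout, giving strict convexity in part (a). For $\beta = 0$, taking the limit (noting $\varphi_\beta(a) \to a$ and $(2/\beta)\log h(w) \to -2aw$) gives $J_{a,0}(w) = a + (1-2a)w$, which is linear, as claimed.

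For parts (b) and (c), I would analyze the signs of $J'_{a,\beta}$ at the endpoints. The formula above gives $J'_{a,\beta}(0) = 1 - 2\varphi$, and since $h(1)>0$, the sign of $J'_{a,\beta}(1)$ agrees with that of $1 - \varphi(1+e^{-\beta})$. The boundary derivatives are therefore controlled by whether $\varphi$ is smaller or larger than $1/2$ and $1/(1+e^{-\beta})$. Solving $\varphi_\beta(a) = 1/2$ reduces, after clearing denominators, to $e^{\beta(1/2-a)} = \cosh(\beta/2)$, i.e., $a = 1/2 - \log\cosh(\beta/2)/\beta = a_c(\beta)$; an entirely analogous algebraic computation gives $\varphi_\beta(a) = 1/(1+e^{-\beta})$ if and only if $a = 1 - a_c(\beta)$.

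Combining these endpoint computations with the strict monotonicity of $J'_{a,\beta}$ (guaranteed by the strict convexity proved in part (a)) then finishes the proof. If $a \in [0,a_c(\beta)]$ then $J'_{a,\beta}(0) \geq 0$, and strict monotonicity of $J'_{a,\beta}$ forces $J'_{a,\beta}(w) > 0$ for $w \in (0,1]$, so $J_{a,\beta}$ is strictly increasing on $[0,1]$; a symmetric argument handles $a \in [1-a_c(\beta),1]$. Finally, if $a \in (a_c(\beta),1-a_c(\beta))$ then $J'_{a,\beta}(0) < 0 < J'_{a,\beta}(1)$, and by the intermediate value theorem together with the strict monotonicity of $J'_{a,\beta}$, there is a unique $z_0 \in (0,1)$ with $J'_{a,\beta}(z_0) = 0$, yielding the decreasing-then-increasing behavior of part (c). I expect the only genuine obstacle to be the algebraic identification of the critical $a$-values, which fortunately simplifies cleanly via the $\cosh$ identity above.
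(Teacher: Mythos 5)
Your proposal is correct and takes essentially the same route as the paper: compute $J'$ and $J''$, deduce strict convexity from $J''>0$, then pin down the phase boundary $a_c(\beta)$ by locating when the unique critical point of the convex function $J_{a,\beta}$ lies in $(0,1)$. The only cosmetic difference is that you detect this by checking the signs of $J'_{a,\beta}(0)$ and $J'_{a,\beta}(1)$, whereas the paper explicitly solves $J'_{a,\beta}(z_0)=0$ for $z_0=\frac{1}{\beta}\log\frac{\varphi_\beta(a)}{1-\varphi_\beta(a)}$ and asks when $z_0\in(0,1)$; you also (correctly) use the form $\log\bigl(1-\varphi_\beta(a)(1-e^{-\beta w})\bigr)$, silently fixing a sign slip in the paper's displayed expression for $J_{a,\beta}$.
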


\begin{proof}The derivatives of the function $J_{a,\beta}$ are  
$$J_{a,\beta}'(z)=\frac{d}{dz}J_{a,\beta}(z)=1+\frac{2\varphi_\beta(a)e^{-\beta z}}{\varphi_\beta(a)\left(1-e^{-\beta z}\right)-1}$$
and 
$$J_{a,\beta}''(z)=2\varphi_\beta(a)\cdot\frac{-\beta e^{-\beta z}\left(\varphi_\beta(a)\left(1-e^{-\beta z}\right)-1\right)-\beta e^{-2 \beta z}\varphi_\beta(a)}{\left(\varphi_\beta(a)(\left(1-e^{-\beta z}\right)-1\right))^2}=-\frac{\beta e^{\beta z}\left(\varphi_\beta(a)-1\right)}{\varphi_\beta(a)\left(\left(1-e^{-\beta z}\right)-1\right)^2}.$$
Note that $\varphi_a(\beta)-1=\frac{1-e^{\beta -a \beta }}{e^{\beta }-1}\leq 0$, for all $\beta \in \R$. Therefore, for $\beta >0$, $J_{a,\beta}''(z)>0$ for all $z\in\R$, and so, $J_{a,\beta}$ is a convex function. 


The convexity of $J_{a,\beta}$ implies that $J_{a,\beta}'$ is increasing, and $J_{a,\beta}'(z)=0$ has at most one solution $z_0$ in $[0, 1]$:
$$z_0:=\frac{1}{\beta}\log\left(\frac{\varphi_\beta(a)}{1-\varphi_\beta(a)}\right)\in (0, 1) \Leftrightarrow a\in [a_c(\beta), 1-a_c(\beta)],$$
where $a_c(\beta)$ is defined in Theorem \ref{th:mallowsdegreejoint}. Therefore, for  $a\notin [a_c(\beta), 1-a_c(\beta)]$, the function $J_{a,\beta}$ is strictly monotone: for $a\in [0, a_c(\beta)]$ the function $J_{a,\beta}$ is strictly increasing, and for $a\in [1-a_c(\beta), 1]$, the function $J_{a,\beta}$ is strictly decreasing in $[0, 1]$.

For $a\in [a_c(\beta), 1-a_c(\beta)]$, the function $J_{a,\beta}$ has a minimum at $z_0$, and is strictly decreasing in $[0, z_0)$ and strictly increasing in $(z_0, 1]$.
\end{proof}

Recall the definition of $R(a, \beta)$ from (\ref{eq:Rab}). For notational convenience, for $\beta>0$ and $a\in [0, 1]$ define the function $h_{a, \beta}:[0, 1]\to \R$,
\begin{eqnarray}
h_{a, \beta}(x)&=&\frac{\beta  e^{\frac{1}{2} \beta  (a-x)}}{(1-e^{-\beta})\sqrt{e^{\beta  (a+x)}-e^\beta R(a, \beta)}}\nonumber\\
&=&\frac{\beta  e^{\frac{1}{2} \beta  (a-x+2)}}{\sqrt{4e^\beta(e^{\beta a}-1) (e^{\beta a}-e^{\beta})
+e^{\beta  (a+x)}(1-e^\beta)^2}}.
\end{eqnarray}

The above lemma will now be used to complete the proof of Theorem \ref{th:mallowsdegreejoint}. 
Assume that $\beta\geq 0$, and consider the two cases (recall the definition of $J_{a, \beta}$ from (\ref{eq:J1})):

\begin{description}

\item[1] Suppose $a\notin [a_c(\beta), 1-a_c(\beta)]$. In this case,  $J_{a,\beta}$ is increasing in $[0,1]$ (Lemma \ref{eq:jprop}) and the equation $J_{a,\beta}(z)=w$ has a unique solution
$J_{a, \beta}^{-1}(w)\in [0, 1]$. Then by the Jacobian transformation and direct calculations, the density of $J_{a,\beta}(W)$ simplifies to 
$$g_{a, \beta}(w)=\left|\frac{d}{d w}J_{a, \beta}^{-1}(w)\right|f_\beta(J_{a, \beta}^{-1}(w), a)=h_{a, \beta}(w).$$
The support of $J_{a,\beta}(W)$ is $[J_{a,\beta}(0), J_{a,\beta}(1)]=[a, 1-a]$.

\item[2] Suppose $a\in [a_c(\beta), 1-a_c(\beta)]$.  In this case, for $w\in [0, 1]$ the equation $J_{a,\beta}(z)=w$ has at most two solutions in  $[0, 1]$ depending on the value of $w$ (Lemma \ref{eq:jprop}). 
\begin{description}
\item[2.1]$w \in [J_{a,\beta}(0), J_{a,\beta}(1)]=[a, 1-a]$. This situation is same as the previous case, that is, $J_{a,\beta}(z)=w$ has a unique solution $J_{a, \beta}^{-1}(w)\in [0, 1]$, and the density of $J_{a,\beta}(W)$ simplifies to 
$$g_{a, \beta}(w)=\left|\frac{d}{d w}J_{a, \beta}^{-1}(w)\right|f_\beta(J_{a, \beta}^{-1}(w), a)=h_{a, \beta}(w).$$
for $w\in[a, 1-a]$.

\item[2.2]$w \notin [J_{a,\beta}(0), J_{a,\beta}(1)]=[a, 1-a]$. In this case, $J_{a,\beta}(z)=w$ has two solutions  given by $J_{a, \beta, 1}^{-1}(w)=-\frac{2}{\beta}\log \gamma_1(w)$ and $J_{a, \beta, 2}^{-1}(w)=-\frac{2}{\beta}\log\gamma_2(w)$, where $\gamma_1(w)$ and $\gamma_2(w)$ are roots of the quadratic 
$$\gamma e^{\frac{\beta}{2}(w-a)}+1= \varphi_\beta(a)\left(1-\gamma^2\right)\Rightarrow \varphi_\beta(a)\gamma^2+\gamma e^{\frac{\beta}{2}(w-a)}+1-\varphi_\beta(a)=0.$$
The above quadratic equation is obtained by simplifying the equation $J_{a,\beta}(z)=w$ and substituting $\gamma=e^{-\beta z/2}$.
Then by the Jacobian transformation, the density of $J_{a,\beta}(Z)$ is 
\begin{equation}
g_{a, \beta}(w)=\sum_{s=1}^{2}\left|\frac{d}{d w}J_{a, \beta, s}^{-1}(w)\right|f_\beta(J_{a, \beta, s}^{-1}(w), a)
\label{eq:jacobian_I}
\end{equation}
Substituting $J_{a, \beta, 1}^{-1}(w), J_{a, \beta, 2}^{-1}(w)$, and the density $f_\beta(\cdot, a)$ (\ref{eq:density_mallows}), and simplifying (\ref{eq:jacobian_I}) gives $g_{a, \beta}(w)=2h_{a, \beta}(w)$. Since the function $J_{a,\beta}$ has a minimum at $z_0$, and is strictly decreasing in $[0, z_0)$ and strictly increasing in $(z_0, 1]$, the support of $J_{a,\beta}(W)$ is $$[J_{a,\beta}(z_0), J_{a,\beta}(0)\vee J_{a,\beta}(1)]= \left[1-a+\frac{1}{\beta}\log R(a, \beta), a\vee 1-a\right].$$
\end{description}
\end{description}

For the $\beta<0$ the result can be proved similarly. Theorem \ref{th:mallowsdegreejoint} holds verbatim even for $\beta<0$, if every interval $[a, b]$ is interpreted as $[a\vee b, a\wedge b]$.

\section{Asymptotics For The Minimum Degree: Proof of Theorem \ref{min_deg}}
\label{sec:min_deg}

This section gives the proof of the limiting Rayleigh distribution of the minimum degree  in a uniformly random permutation graph.

For $i \in [n]$ define 
\begin{equation}
c_n(i)=
\left\{
\begin{array}{ccc}
i+\pi_n(i)  & \text{ for } 1\leq  i < \frac{n+1}{2}    \\
2(n+1)-i-\pi_n(i)  &   \text{ for }  \frac{n+1}{2} <  i \leq n.
\end{array}
\right.
\label{cndefn}
\end{equation}

The following lemma shows that the degrees $d_n(i)$ can be small (order $\sqrt{n}$) only if $c_n(i)$ is small, which can happen only if  $i$ is such that either $i$ or $n+1-i$ is small (order $\sqrt{n}$).

\begin{lem}\label{lem_hyper1}
For any $\gamma\in (0,\infty)$ 
\begin{align*}
\lim_{M\rightarrow\infty}\lim_{n\rightarrow\infty}\sum_{i=1}^n\P(d_n(i)\le \gamma\sqrt{n},c_n(i)>M\sqrt{n})=0.
\end{align*}
\end{lem}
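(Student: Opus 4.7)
My plan is to pair a simple deterministic lower bound on $d_n(i)$ in terms of the displacement $|i-\pi_n(i)|$ with the Gaussian-type hypergeometric tail from Proposition \ref{hyper}: the deterministic bound confines the rare event to a narrow band of values of $\pi_n(i)$ around $i$, and the concentration bound then supplies an exponential factor that is summable to $0$ as $M\to\infty$.

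First, given $\pi_n(i)=j$, counting the $i-1$ positions below $i$ yields $b_n(i)-a_n(i)=j-i$, hence $d_n(i)=2a_n(i)+(j-i)$; non-negativity of $a_n(i)$ and $b_n(i)=a_n(i)+(j-i)$ gives the deterministic inequality $d_n(i)\ge |i-\pi_n(i)|$. Consequently $\{d_n(i)\le \gamma\sqrt n\}$ forces $|\pi_n(i)-i|\le \gamma\sqrt n$. By the measure-preserving symmetry $(i,j)\mapsto(n+1-i,n+1-j)$ on $S_n$ (which sends $c_n$ to $c_n$) it suffices to bound the sum over $i\le n/2$, for which $c_n(i)=i+\pi_n(i)$; on the relevant event both $i$ and $\pi_n(i)$ then exceed $(M-\gamma)\sqrt n/2$, so only $i\ge (M-\gamma)\sqrt n/2$ contribute, and for each such $i$ at most $2\gamma\sqrt n+1$ values of $j$ are admissible.

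Second, for $i\le n/2$ the conditional mean from Proposition \ref{hyper} satisfies
$$\mu(i,j):=\E\bigl(d_n(i)\,\bigm|\,\pi_n(i)=j\bigr)=\frac{(i-1)(n-j)+(j-1)(n-i)}{n-1}\ge \frac{j-1}{2},$$
using $n-i\ge n/2$. Combined with $j\ge i-\gamma\sqrt n$, this yields $\mu(i,j)\ge (i-\gamma\sqrt n-1)/2$, which is at least $i/4$ once $i\ge C_\gamma\sqrt n$ for a sufficiently large $C_\gamma$. Applying the sub-Gaussian tail bound of Proposition \ref{hyper} with $R=\mu(i,j)-\gamma\sqrt n$ then gives
$$\P\bigl(d_n(i)\le \gamma\sqrt n\,\bigm|\,\pi_n(i)=j\bigr)\le 2\exp(-c\,i^2/n)$$
for an absolute constant $c>0$, uniformly over all admissible $j$.

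Finally, since $\P(\pi_n(i)=j)=1/n$, summing the previous bound yields
$$\sum_{i\le n/2}\P\bigl(d_n(i)\le \gamma\sqrt n,\,c_n(i)>M\sqrt n\bigr)\lesssim \frac{1}{\sqrt n}\sum_{i\ge (M-\gamma)\sqrt n/2}\exp(-c\,i^2/n),$$
which is a Riemann sum converging as $n\to\infty$ to $\int_{(M-\gamma)/2}^{\infty}\exp(-cu^2)\,du$, and this vanishes as $M\to\infty$. Adding the symmetric contribution from $i>n/2$ completes the argument. The one delicate step is that the lower bound on $\mu(i,j)$ must be linear in $i$ rather than merely $\gtrsim M\sqrt n$: a constant-in-$i$ lower bound would only give a useless $\sqrt n\cdot e^{-cM^2}$ estimate, whereas the linear bound converts the $\sqrt n$-wide $j$-band into a Gaussian that is uniformly integrable in $i/\sqrt n$.
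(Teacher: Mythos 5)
Your proof is correct, and it takes a genuinely different and more streamlined route than the paper's. The paper splits the sum into three ranges of $(i,j)$ — $i\in[\tfrac{n+1}{4},\tfrac{n+1}{2}]$, $i\in[\tfrac{M\sqrt n}{2},\tfrac{n+1}{4}]$, and $i<\tfrac{M\sqrt n}{2}$ with $j>\tfrac{M\sqrt n}{2}$ — bounding the conditional mean differently in each regime and landing on a product of one-dimensional Gaussian tails (in $i/\sqrt n$ and $j/\sqrt n$). You instead extract the deterministic inequality $d_n(i)\ge|i-\pi_n(i)|$ from $d_n(i)=2a_n(i)+(\pi_n(i)-i)=2b_n(i)-(\pi_n(i)-i)$, which the paper never uses; this immediately confines the rare event to a band $|j-i|\le\gamma\sqrt n$ of width $O(\sqrt n)$, making the case with $i$ small and $j$ large vacuous and rendering the $j$-sum a trivial counting factor $(2\gamma\sqrt n+1)/n\asymp 1/\sqrt n$. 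Then a single lower bound $\E(d_n(i)\mid\pi_n(i)=j)\ge(j-1)/2\ge i/4$ (valid for $i\le n/2$, $j\ge i-\gamma\sqrt n$, and $i\gtrsim_\gamma\sqrt n$) feeds into Proposition \ref{hyper} to give a uniform sub-Gaussian bound $2e^{-ci^2/n}$, and the whole sum collapses to a single Riemann sum converging to $\int_{(M-\gamma)/2}^\infty e^{-cu^2}\,du\to 0$. Your approach buys a unified treatment with no case split and isolates a clean structural fact about permutation-graph degrees; the paper's approach is more mechanical but arrives at the same Gaussian-tail picture. Both hinge on the same hypergeometric concentration input and the same $(i,j)\mapsto(n+1-i,n+1-j)$ symmetry to reduce to $i\le n/2$. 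One small bookkeeping point, which is shared with the paper's own write-up: Proposition \ref{hyper} bounds $\P(|d_n(i)-\mu|>R)$ with a strict inequality, so to control $\P(d_n(i)\le\gamma\sqrt n)=\P(\mu-d_n(i)\ge\mu-\gamma\sqrt n)$ one should apply it with $R$ slightly smaller than $\mu-\gamma\sqrt n$, which only affects the absolute constant $c$ and does not change the conclusion.
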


\begin{proof}
By symmetry it suffices to show that
$$\lim_{M\rightarrow\infty}\lim_{n\rightarrow\infty}\sum_{1\le i\le \frac{n+1}{2}}\P(d_n(i)\le \gamma \sqrt{n},i+\pi_n(i)>M\sqrt{n})=0,$$
which follows if we can show the following:
\begin{equation}
\label{hyper4}
\lim_{n\rightarrow\infty}\sum_{ \frac{n+1}{4}\le i\le \frac{n+1}{2}}\P(d_n(i)\le\gamma \sqrt{n})=0,
\end{equation}
\begin{equation}\label{hyper5}
\lim_{M\rightarrow\infty}\lim_{n\rightarrow\infty}\sum_{\frac{M\sqrt{n}}{2}\le i\le  \frac{n+1}{4}}\P(d_n(i)\le\gamma \sqrt{n})=0,
\end{equation}
\begin{equation}
\label{hyper6}
\lim_{M\rightarrow\infty}\lim_{n\rightarrow\infty}\sum_{1\le i\le \frac{M\sqrt{n}}{2}}\sum_{\frac{M\sqrt{n}}{2}\le j\le n}\P(d_n(i)\le \gamma\sqrt{n},\pi_n(i)=j)=0. 
\end{equation}

Recall by Proposition \ref{hyper}, $\E(d_n(i)|\pi_n(i)=j)=\frac{(i-1)(n-j)+(j-1)(n-i)}{n-1}=\frac{j(n-2i+1)+(n+1)i-2n}{n-1}$. Therefore, for $\frac{n+1}{4}\le i \le \frac{n+1}{2}$,
 \begin{align*}
\E(d_n(i)|\pi_n(i)=j)-\gamma\sqrt{n}\ge j\left(\frac{n+1}{2(n-1)}\right)+i\left(\frac{n+1}{n-1}\right)-(\gamma\sqrt{n}+2)\ge\frac{n+1}{8},
\end{align*}
for all $n$ large enough. An application of Lemma \ref{hyper} now gives 
$$\P(d_n(i)\le \gamma\sqrt{n}|\pi_n(i)=j)\le 2e^{-\frac{n}{128}}.$$
On adding over $i$ and $j$ gives
$$\sum_{\frac{n+1}{4}\le i\le\frac{n+1}{2}}\P(d_n(i)\le \gamma\sqrt{n},i+\pi_n(i)>M\sqrt{n})\le \sum_{\frac{n+1}{4}\le i\le \frac{n+1}{2}}\P(d_n(i)\le \gamma\sqrt{n})\le ne^{-\frac{n}{128}},$$
from which (\ref{hyper4}) follows. 

Proceeding to prove (\ref{hyper5}), for $\frac{n+1}{4}\le i \le \frac{n+1}{2}$,
 \begin{align*}
\E(d_n(i)|\pi_n(i)=j)-\gamma\sqrt{n}\geq &j\left(\frac{n+1}{2(n-1)}\right)+i-(\gamma\sqrt{n}+2)\ge\frac{i+j}{2}
\end{align*}
for all $M\ge 4\gamma+8$. Lemma \ref{hyper} gives 
$$\P(d_n(i)\le \gamma\sqrt{n}|\pi_n(i)=j)\le 2\sum_{j\ge 1}e^{-(i^2+j^2)/8},$$
which on summing over $i$ and $j$ gives
$$\sum_{\frac{M\sqrt{n}}{2}\le i\le \frac{n+1}{4}}\P(d_n(i)\le \gamma\sqrt{n})\le 2\int_{M/2}^\infty e^{-x^2/8}dx\int_0^\infty e^{-y^2/8}dy.$$
Since the RHS of the above equation  goes to $0$ on letting $M\rightarrow\infty$, (\ref{hyper5}) follows.

Finally, to show (\ref{hyper6}), for $1\le i\le \frac{M\sqrt{n}}{2}$, and $\frac{M\sqrt{n}}{2} \le j\le n$, note that
\begin{align*}
\E(d_n(i)|\pi_n(i)=j)-\gamma\sqrt{n}\ge j/2-\gamma\sqrt{n}-2+i\geq \frac{i+j}{4},
\end{align*}
for $M\ge 4\gamma+8$. Thus by a similar argument as before, we have
$$\lim_{n\rightarrow\infty}\sum_{1\le i\le \frac{M\sqrt{n}}{2}}\sum_{\frac{M\sqrt{n}}{2}\le j\le n}\P(d_n(i)\le \gamma\sqrt{n},\pi_n(i)=j)\le 
2\int_{0}^{M/2} e^{-x^2/32}dx\int_{M/2}^\infty e^{-y^2/32}dy,$$
 which goes to $0$ as $M\rightarrow\infty$ as before. This completes the proof of the lemma.
\end{proof}

The following lemma now strengthens the above result to show that  $d_n(i)$ and  $c_n(i)$ are close for those indices $i$ where either $i$ or $n+1-i$ is small.

\begin{lem}\label{lem_hyper2}
For any $\gamma>\varepsilon>0$
\begin{itemize}
\item[{\it (a)}] $\lim_{n\rightarrow\infty}\sum_{i=1}^n\P(d_n(i)\le \gamma\sqrt{n},c_n(i)> (\gamma+\varepsilon)\sqrt{n})=0$,
\item[{\it (b)}] $\lim_{n\rightarrow\infty}\sum_{i=1}^n\P(d_n(i)>\gamma\sqrt{n},c_n(i)\le (\gamma-\varepsilon)\sqrt{n})=0$.
\end{itemize}
\end{lem}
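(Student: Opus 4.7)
Part (b) is essentially deterministic.  For $i\leq (n+1)/2$, the identity $d_n(i)=2a_n(i)+\pi_n(i)-i$ (which follows from $b_n(i)-a_n(i)=\pi_n(i)-i$), together with $a_n(i)\leq i-1$, yields
\[
d_n(i)\ \leq\ 2(i-1)+\pi_n(i)-i\ =\ c_n(i)-2.
\]
Symmetrically, for $i>(n+1)/2$ one has $d_n(i)=2b_n(i)+i-\pi_n(i)\leq 2(n-i)+i-\pi_n(i)=c_n(i)-2$.  Hence once $n$ is large enough that $\varepsilon\sqrt{n}>2$, the event $\{d_n(i)>\gamma\sqrt{n},\ c_n(i)\leq(\gamma-\varepsilon)\sqrt{n}\}$ is empty for every $i$, and (b) follows.

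For part (a), I would first use Lemma \ref{lem_hyper1} to truncate the range of $c_n(i)$: it suffices to show that for each fixed $M<\infty$,
\[
\lim_{n\to\infty}\sum_{i=1}^n \P\bigl(d_n(i)\leq\gamma\sqrt{n},\ (\gamma+\varepsilon)\sqrt{n}<c_n(i)\leq M\sqrt{n}\bigr)\ =\ 0,
\]
and then let $M\to\infty$.  The involution $\pi_n(\cdot)\mapsto n+1-\pi_n(n+1-\cdot)$ preserves the uniform measure and satisfies $\tilde d_n(i)=d_n(n+1-i)$ and $\tilde c_n(i)=c_n(n+1-i)$, so by symmetry I may restrict to $i\leq (n+1)/2$.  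In that range $c_n(i)=i+\pi_n(i)$, so the truncation forces $i,\pi_n(i)\leq M\sqrt{n}$.

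The key identity is $c_n(i)-d_n(i)=2(i-a_n(i))$, obtained directly from $d_n(i)=2a_n(i)+\pi_n(i)-i$ and $c_n(i)=i+\pi_n(i)$.  Hence $\{d_n(i)\leq\gamma\sqrt{n},\, c_n(i)>(\gamma+\varepsilon)\sqrt{n}\}$ forces $X:=(i-1)-a_n(i)>\tfrac{\varepsilon}{2}\sqrt{n}-1$.  Conditional on $\pi_n(i)=k$, Proposition \ref{hyper} gives $X\sim\mathrm{Hyp}(n-1,\,k-1,\,i-1)$, with mean $\mu=(i-1)(k-1)/(n-1)$ and variance bounded above by a constant multiple of $\mu$.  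In the regime $i,k\leq M\sqrt{n}$, both $\mu$ and $\mathrm{Var}(X)$ are $O(M^2)$, uniformly in $n$, so Chebyshev yields
\[
\P\bigl(X>\tfrac{\varepsilon}{2}\sqrt{n}-1\ \big|\ \pi_n(i)=k\bigr)\ =\ O\!\left(\frac{M^2}{\varepsilon^2\, n}\right)
\]
for all $n$ sufficiently large (depending on $M,\varepsilon$).  Weighting each pair by $\P(\pi_n(i)=k)=1/n$ and noting that the number of pairs $(i,k)$ with $i+k\leq M\sqrt{n}$ is $O(M^2 n)$, the full double sum is $O(M^4/(\varepsilon^2 n))=o_n(1)$, which completes the inner limit.

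The subtle point to watch is the choice of concentration bound.  The Bernstein-type estimate $2e^{-R^2/(2n)}$ from Proposition \ref{hyper} is far too weak at the $\sqrt{n}$ scale, since $R=\Theta(\sqrt{n})$ produces only an $O(1)$ bound.  The rescuing observation is that once one is inside the "small $i,k$" window isolated by Lemma \ref{lem_hyper1}, the hypergeometric $X$ has variance $O(1)$ rather than $O(n)$; this extra structure upgrades the per-pair probability from $O(1)$ to $O(1/n)$, which is exactly what is needed to close the summation after the $1/n$ weighting from $\P(\pi_n(i)=k)=1/n$.
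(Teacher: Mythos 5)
Your proof is correct. For part (a) your route coincides with the paper's: truncate to $c_n(i)\le M\sqrt n$ via Lemma \ref{lem_hyper1}, then apply Chebyshev to the conditional hypergeometric; the paper centers $d_n(i)$ around $i+\pi_n(i)$ while you work with $X=(i-1)-a_n(i)$, but since $c_n(i)-d_n(i)=2(i-a_n(i))$ these are the same variable up to an affine shift, and both yield an $O\bigl(M^4/(\varepsilon^2 n)\bigr)$ bound on the double sum after the $1/n$ weighting from $\P(\pi_n(i)=k)$. Your remark that the $2e^{-R^2/(2n)}$ tail of Proposition \ref{hyper} is useless at scale $R=\Theta(\sqrt n)$, and that the conditional variance collapses to $O(M^2)$ once $i,\pi_n(i)\le M\sqrt n$, is exactly the point the paper also exploits. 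Part (b) is where you genuinely improve on the paper: the deterministic inequality $d_n(i)\le c_n(i)-2$ (which you derive correctly on both halves of $[n]$ from $d_n(i)=2a_n(i)+\pi_n(i)-i$ with $a_n(i)\le i-1$, and its mirror) makes the event in (b) literally empty for every $i$ and every $n$, with no estimate needed, whereas the paper sketches a probabilistic argument parallel to (a). Your inequality also gives $\{\min_i d_n(i)>\gamma\sqrt n\}\subseteq\{\min_i c_n(i)>\gamma\sqrt n\}$ directly, which shortcuts one half of the sandwiching in the proof of Theorem \ref{min_deg}; and, as you note, (b) never needs Lemma \ref{lem_hyper1}, since the event already confines $c_n(i)$ to $O(\sqrt n)$. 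One tiny caveat, shared with the paper: $c_n$ is left undefined at $i=(n+1)/2$ when $n$ is odd, so that single index is implicitly excluded from the sums.
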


\begin{proof}
We claim that for every fixed $M<\infty,\varepsilon>0$,
\begin{align}\label{hyper7}
\lim_{n\rightarrow\infty}\sum_{1\le i\le n}\P(|d_n(i)-c_n(i)|>\varepsilon\sqrt{n},c_n(i)\le M\sqrt{n})=0.
\end{align}

Proceeding to complete the proof of the lemma using \eqref{hyper7}, note that
\begin{align*}
&\P(d_n(i)\le \gamma\sqrt{n}, c_n(i)>(\gamma+\varepsilon)\sqrt{n})\\
\le& \P(d_n(i)\le \gamma\sqrt{n}, c_n(i)>M\sqrt{n})+\P(|d_n(i)-c_n(i)|> \varepsilon\sqrt{n},c_n(i)\le M\sqrt{n}).
\end{align*}
Summing over $i$ and letting $n\rightarrow\infty$ followed by $M\rightarrow\infty$, the second term goes to $0$ by (\ref{hyper7}), and the first term goes to $0$ by Lemma \ref{lem_hyper1}. This completes the proof of part (a). The proof of part (b) follows by similar calculations.

Turning to the proof of (\ref{hyper7}), note that by symmetry it suffices to show that
\begin{align*}
\lim_{n\rightarrow\infty}\sum_{1\le i\le \frac{n+1}{2}}\P(|d_n(i)-c_n(i)|>\varepsilon\sqrt{n},c_n(i)\le M\sqrt{n})=0,
\end{align*}
which follows if we can show
\begin{align*}
\lim_{n\rightarrow\infty}\frac{1}{n} \sum_{1\le i\le M\sqrt{n}}\sum_{1\le j\le M\sqrt{n}}\P(|d_n(i)-i-j|>\varepsilon\sqrt{n}|\pi_n(i)=j)=0.
\end{align*}

To this end, for $1\le i,j\le M\sqrt{n}$,
$$\left|\E(d_n(i)|\pi_n(i)=j)-i-j\right|=\left|\frac{(i-1)(n-j)+(j-1)(n-i)}{n-1}-i-j\right|\le  2M,$$
and
$$
\Var(d_n(i)|\pi_n(i)=j)=\frac{(i-1)(j-1)(n-i)(n-j)}{(n-1)^2(n-2)}\le 2M.$$
Therefore, by Chebyshev's inequality
\begin{align*}
\P(|d_n(i)-i-j|\ge \varepsilon\sqrt{n}|\pi_n(i)=j)\le \frac{4M^2}{(\varepsilon\sqrt{n}-2M)^2}.
\end{align*}
This readily gives
\begin{align*}
\frac{1}{n} \sum_{1\le i\le M\sqrt{n}}\sum_{1\le j\le M\sqrt{n}}\P(|d_n(i)-i-j|>\varepsilon\sqrt{n}|\pi_n(i)=j)\le \frac{4M^4}{(\varepsilon\sqrt{n}-2M)^2},
\end{align*}
which goes to $0$ on letting $n\rightarrow\infty$, for every $M<\infty$ and $\varepsilon>0$.
\end{proof}

\subsubsection{Completing the proof of Lemma \ref{min_deg}}

Using the above lemmas we can now complete the proof of the theorem. To this end, it suffices to show that for any $\gamma>0$
\begin{align}
\lim_{n\rightarrow\infty}\P(d_n(i)>\gamma\sqrt{n}, \text{ for all }1\le i\le n)=e^{-\gamma^2/2}.
\label{eq:gamma}
\end{align}
Note that
\begin{align*}
\P(d_n(i)>\gamma\sqrt{n},1\le i\le n)\le \sum_{i=1}^n\P(d_n(i)>\gamma\sqrt{n},c_n(i)\le (\gamma-\varepsilon)\sqrt{n})+\P(c_n(i)>(\gamma-\varepsilon)\sqrt{n},1\le i\le n),
\end{align*}
and so by Lemma \ref{lem_hyper2}
$$\limsup_{n\rightarrow\infty}\P(d_n(i)>\gamma\sqrt{n},1\le i\le n)\le \limsup_{n\rightarrow\infty}\P(c_n(i)>(\gamma-\epsilon)\sqrt{n},1\le i\le n).$$
A similar argument gives 
$$\limsup_{n\rightarrow\infty}\P(c_n(i)>\gamma\sqrt{n},1\le i\le n)\limsup_{n\rightarrow\infty}\P(d_n(i)>(\gamma-\epsilon)\sqrt{n},1\le i\le n),$$
and so
 to prove (\ref{eq:gamma}) and hence the theorem, it suffices to prove the following lemma:

\begin{lem}\label{lem:limit_dist} Let $c_n(\cdot)$ be as defined in (\ref{cndefn}). Then  for any $\gamma>0$, 
\begin{align}
\lim_{n\rightarrow\infty}\P\left(\min_{1\le i\le n}c_n(i)> \gamma\sqrt{n}\right)=e^{-\gamma^2}.
\label{eq:cn}
\end{align}
\end{lem}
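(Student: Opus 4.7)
\textbf{Proof proposal for Lemma \ref{lem:limit_dist}.} The plan is to show that the number $N$ of indices $i\in[n]$ for which $c_n(i)\le \gamma\sqrt{n}$ is asymptotically Poisson with mean $\gamma^2$, from which $\P(N=0)\to e^{-\gamma^2}$ follows by the method of factorial moments. Split the set of "bad pairs" into a left part $B_L=\{(i,j):\ i,j\ge 1,\ i\le (n+1)/2,\ i+j\le \gamma\sqrt n\}$ and a right part $B_R=\{(i,j):\ i,j\le n,\ i>(n+1)/2,\ 2(n+1)-i-j\le\gamma\sqrt n\}$ (by symmetry the right part is just $B_L$ reflected through $(n+1,n+1)/2$). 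Then $N=\#\{(i,j)\in B_L\cup B_R:\pi_n(i)=j\}$, and $\{\min_i c_n(i)>\gamma\sqrt n\}=\{N=0\}$. A direct count gives $|B_L|=|B_R|=\binom{\lfloor \gamma\sqrt n\rfloor}{2}\sim \gamma^2 n/2$ for $n$ large enough that $\gamma\sqrt n\le (n+1)/2$, so $|B|:=|B_L\cup B_R|\sim \gamma^2 n$.

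Next I would compute the factorial moments. For any set $S=\{(i_1,j_1),\ldots,(i_k,j_k)\}$ of $k$ bad pairs which is an \emph{injective partial map} (all $i_\ell$ distinct and all $j_\ell$ distinct), uniformity of $\pi_n$ gives $\P(\pi_n(i_\ell)=j_\ell\ \forall \ell)=(n-k)!/n!$. Hence
\[\E\binom{N}{k}=\frac{(n-k)!}{n!}\cdot \#\{k\text{-subsets of }B\text{ that are injective partial maps}\}.\]
The total number of $k$-subsets of $B$ is $\binom{|B|}{k}\sim (\gamma^2 n)^k/k!$. The number of $k$-subsets that fail to be injective is bounded by a constant (depending on $k$) times $|B|^{k-1}\cdot\max_i |\{j:(i,j)\in B\}|=O(n^{k-1}\cdot\sqrt n)=o(n^k)$, since for each fixed $i$ only $O(\sqrt n)$ values of $j$ keep $(i,j)$ in $B$, and similarly for fixed $j$. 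Therefore the injective $k$-subsets still number $(1+o(1))(\gamma^2 n)^k/k!$, and
\[\E\binom{N}{k}=(1+o(1))\,\frac{(\gamma^2 n)^k}{k!}\cdot\frac{(n-k)!}{n!}\longrightarrow \frac{\gamma^{2k}}{k!}.\]

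These are exactly the factorial moments of a Poisson$(\gamma^2)$ variable, so $N\stackrel{\sD}{\to}\text{Poisson}(\gamma^2)$ by the method of moments (after checking a uniform tail bound to justify Bonferroni-style inclusion-exclusion, which is standard because the moments grow only as $\gamma^{2k}/k!$). In particular
\[\lim_{n\to\infty}\P(N=0)=\sum_{k=0}^{\infty}(-1)^k\lim_{n\to\infty}\E\binom{N}{k}=\sum_{k=0}^{\infty}(-1)^k\frac{\gamma^{2k}}{k!}=e^{-\gamma^2},\]
which is (\ref{eq:cn}). The main technical step is the combinatorial bookkeeping showing the non-injective $k$-subsets of $B$ are a lower-order contribution; everything else is Poisson approximation for indicator sums with weak dependence from the uniform permutation constraint, a standard setup that could also be implemented via the Chen-Stein method if preferred.
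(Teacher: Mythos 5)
Your proof is correct, but it follows a genuinely different route from the paper. The paper does not use Poisson approximation at all: it observes that the minimum of $c_n$ over $i\le\gamma\sqrt n$ is controlled by a single explicit product (essentially the probability that the leftmost $\lfloor\gamma\sqrt n\rfloor$ images of $\pi_n$ avoid a staircase region), conditions on those leftmost values to estimate the analogous event at the right end, and then sandwiches $\P(\min_i c_n(i)>\gamma\sqrt n)$ between a lower and an upper bound both converging to $e^{-\gamma^2}$. That argument is elementary and hands-on but yields only the survival probability. Your approach identifies $N=\#\{i:c_n(i)\le\gamma\sqrt n\}$ with the number of pairs $(i,j)$ in the $O(\sqrt n)$-thick corner set $B$ that $\pi_n$ hits, shows the $k$-th binomial moment converges to $\gamma^{2k}/k!$ (the non-injective $k$-subsets being lower order because each row/column of $B$ has only $O(\sqrt n)$ entries), and invokes the Bonferroni inequalities to deduce $\P(N=0)\to e^{-\gamma^2}$. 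This is cleaner conceptually, gives the stronger statement that $N\stackrel{\sD}\to\mathrm{Poisson}(\gamma^2)$ (so e.g.\ the number of near-minimal-degree vertices has a Poisson limit, not just the indicator of its absence), and generalizes more readily; the price is the slight bookkeeping with injective partial maps and invoking the method of factorial moments rather than an explicit product formula. One small technical point that you implicitly handle but should state explicitly: when $n$ is odd, $c_n((n+1)/2)$ is left undefined in (\ref{cndefn}); this single index affects neither your count of $|B|$ nor the limit, but it deserves a one-line remark. Also, your cardinality $|B_L|=\binom{\lfloor\gamma\sqrt n\rfloor}{2}$ is exact (for $n$ large) and disjoint from $B_R$ on the $i$-coordinate, so $|B|\sim\gamma^2 n$ is right. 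Overall the proposal is sound and self-contained.
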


\begin{proof}
Note that 
\begin{align}
\P\left(\min_{1\le i\le \gamma\sqrt{n}}c_n(i)>\gamma\sqrt{n}\right)=&\P\left(\pi_n(j)>\gamma\sqrt{n}-j, 1\leq j \leq \gamma\sqrt n\right)\nonumber\\
=&\frac{(n-\floor{\gamma \sqrt n}+1)^{\floor{\gamma n}}}{n(n-1)\cdots (n-\floor{\gamma n}+1)}\nonumber\\
=&\frac{(n-\floor{\gamma \sqrt n}+1)^{\floor{\gamma n}}(n-\floor{\gamma n})!}{n!}.
\label{align2}
\end{align}		
Moreover, 
\begin{align}
& \P\left(\min_{n+1-\gamma\sqrt{n}\le j\le n}c_n(j)>\gamma\sqrt{n}\Big |\pi_n(i),1\le i\le \gamma\sqrt{n}\right)\nonumber\\
=&\P\left(\pi_n(j) < 2(n+1)-j-\gamma\sqrt{n},n+1-\gamma\sqrt{n}\le j\le n|\pi_n(i),1\le i\le \gamma\sqrt{n}\right)\nonumber\\
\ge&\frac{(n-1-2\floor{\gamma \sqrt n})^{\floor{\gamma \sqrt n}}(n-2\floor{\gamma \sqrt n})!}{(n-\floor{\gamma \sqrt n})!},
\label{align1}
\end{align}
where the lower bound uses the following argument: The probability of the event is minimized when all the $\pi_n(i)$, for $i \in [\floor{\gamma\sqrt n}]$, are at most $n+1-\gamma \sqrt n$. This minimizes the choices of $\pi_n(j)$, for $n+1-\gamma\sqrt{n}\le j\le n$. In this case, each $\pi_n(j)$ has $(n-1-2\floor{\gamma \sqrt n})$ choices, and the bound follows.

Combining (\ref{align1}) and (\ref{align2}) and taking limits as $n\rightarrow \infty$ gives the lower bound
\begin{align}
 &\P\left(\min_{1\le i\le n}c_n(i)>\gamma\sqrt{n}\right)\nonumber\\
=&\E \left(\P\left(\min_{n+1-\gamma\sqrt{n}\le j\le n}c_n(j)>\gamma\sqrt{n}\Big |\pi_n(i),1\le i\le \gamma\sqrt{n}\right)\pmb 1\left\{\min_{1\le i\le \gamma\sqrt{n}}c_n(i)>\gamma\sqrt{n}\right\}\right)\nonumber\\
\ge & \frac{(n-\floor{\gamma \sqrt n}+1)^{\floor{\gamma n}}(n-1-2\floor{\gamma \sqrt n})^{\floor{\gamma \sqrt n}} (n-\floor{\gamma n})! (n-2\floor{\gamma \sqrt n})!}{n! (n-\floor{\gamma \sqrt n})!}\nonumber\\
&\rightarrow e^{-\gamma^2}.
\label{eq:mindeglb}
\end{align}

For the upper bound, setting $N_n:=|\{1\le i\le \gamma\sqrt{n}:\pi_n(i)\ge  n+1-\gamma\sqrt{n}\}|$ and fixing a large integer $M$  we have
\begin{align}
\P(\min_{1\le i\le n}c_n(i)>\gamma\sqrt{n}) \le & \P(\min_{1\le i\le n}c_n(i)>\gamma\sqrt{n},N_n\le M)+\P(N_n>M)\nonumber\\
\le & \P(\min_{1\le i\le n}c_n(i)>\gamma\sqrt{n},N_n\le M)+\frac{\E N_n}{M},
\label{align3}
\end{align}
by Markov's inequality. Now, since 
$$\E N_n=\sum_{i=1}^{\lfloor \gamma\sqrt{n}\rfloor}\frac{\lfloor \gamma\sqrt{n}\rfloor}{n}\le \gamma^2,$$
the second term in the RHS of (\ref{align3}) to $0$ after taking limits as $n\rightarrow\infty$ followed by $M\rightarrow\infty$. Again, by a similar argument as the lower bound,  on the set $\{N_n\le M\}$ we have
\begin{align}
& \P\left(\min_{n+1-\gamma\sqrt{n}\le j\le n}c_n(j)>\gamma\sqrt{n}|\pi_n(i),1\le i\le \gamma\sqrt{n}\right)\nonumber\\
=&\P\left(\pi_n(j)<2(n+1)-j-\gamma\sqrt{n},n+1-\gamma\sqrt{n}\le j\le n|\pi_n(i),1\le i\le \gamma\sqrt{n}\right)\nonumber\\
\leq &\frac{(n-1-2\floor{\gamma \sqrt n}+M)^{\floor{\gamma \sqrt n}}(n-2\floor{\gamma \sqrt n}+M)!}{(n-\floor{\gamma \sqrt n})!}.
\label{eq:degub}
\end{align}
Therefore, using (\ref{align2}), (\ref{align3}) and (\ref{eq:degub}),
\begin{align}
&\P\left(\min_{1\le i\le n}c_n(i)>\gamma\sqrt{n},N_n\le M\right)\nonumber\\
=&\E \left(\P\left(\min_{n+1-\gamma\sqrt{n}\le j\le n}c_n(j)>\gamma\sqrt{n}|\pi_n(i),1\le i\le \gamma\sqrt{n}\right)\pmb 1\{\min_{1\le i\le \gamma\sqrt{n}}c_n(i)>\gamma\sqrt{n}\} \pmb 1\left\{N_n\le M\right\}\right)\nonumber\\
\leq & \frac{(n-1-2\floor{\gamma \sqrt n}+M)^{\floor{\gamma \sqrt n}}(n-2\floor{\gamma \sqrt n}+M)!}{(n-\floor{\gamma \sqrt n})!}\P\left(\min_{1\le i\le \gamma\sqrt{n}}c_n(i)>\gamma\sqrt{n}\right)\nonumber\\
\rightarrow& e^{-\gamma^2},
\label{eq:mindegub}
\end{align}
by taking limits as $n\rightarrow \infty$ and $M\rightarrow \infty$. This completes the proof of the upper bound, which combined with the lower bound (\ref{eq:mindeglb}) gives the result.
\end{proof}


\begin{thebibliography}{99}
\bibitem{permutation_connected_components}
H. Acan and B. Pittel, On the connected components of a random permutation graph with a given number of edges, {\it Journal of Combinatorial Theory, Series A}, Vol. 120, 1947--1975, 2013.
\bibitem{bafna}
V. Bafna, and P. A. Pevzner, Genome rearrangements and sorting by reversals, {\it SIAM J. Comput.}, Vol. 25 (2),272--289, 1996.
\bibitem{baik}
J. Baik, P. Deift, and K. Johansson, On the distribution of the length of the longest increasing subsequence of random permutations, {\it Journal of the American Mathematical Society}, Vol. 12, 1119--1178, 1999.
\bibitem{billingsley}
P. Billingsley, {\it Convergence of Probability Measures}, New York: John Wiley \& Sons, Inc., 1999.
  \bibitem{baverman_mossel}
M. Braverman and E. Mossel, Sorting from noisy information, {\it arXiv:0910.1191}, 2009.
\bibitem{bhatnagar_peled}
N. Bhatnagar and R. Peled, Lengths of monotone subsequences in a Mallows permutation, {\it Probability Theory and Related Fields}, to appear, 2014.
\bibitem{bona_permutation_book}
M. B\'ona, {\it Combinatorics of Permutations}, Chapman and Hall/CRC, 2004.
\bibitem{bona_pattern_normality}
M. B\'ona, The copies of any permutation pattern are asymptotically normal, {\it arXiv:0712.2792 [math.CO]}, 2007.
\bibitem{graph_limits_I}
C. Borgs, J.T. Chayes, L. Lov\'asz, V.T. S\'os, and K. Vesztergombi, Convergent sequences of dense graphs I: Subgraph frequencies, metric properties and testing, {\it Advances in Mathematics}, Vol. 219, 1801--1851, 2009.
\bibitem{graph_limits_II}
C. Borgs, J.T. Chayes, L. Lov\'asz, V.T. S\'os, and K. Vesztergombi, Convergent sequences of dense graphs II. Multiway cuts and statistical physics, {\it Annals of Mathematics}, Vol. 176, 151--219, 2012.
\bibitem{borodin_diaconis_fulman}
A. Borodin, P. Diaconis, and J. Fulman, On adding a list of numbers (and other one-dependent determinental processes), {\it Bulletin of the Amer. Math. Soc.}, Vol. 47 (4), 639--670, 2010.
\bibitem{chatterjee_degree}
S. Chatterjee, P. Diaconis and A. Sly, Random graphs with a given degree sequenc, {\it Ann. App. Probab.}, Vol. 21 (4), 1400--1435, 2011. 
\bibitem{diaconis_group}
 P. Diaconis, {\it Group representations in probability and statistics}, Institute of Mathematical Statistics, Lecture
Notes-Monograph series 11, 1988.
\bibitem{perm_statistics}
P. Diaconis, R. Graham, and S. Holmes, Statistical Problems Involving Permutations with Restricted Positions,
Lecture Notes-Monograph Series, Vol. 36, 195--222, 2001.
\bibitem{thresholdgraph}
P. Diaconis, S. Holmes, and S. Janson, Threshold graph limits and random threshold graphs, {\it Journal of Internet Mathematics}, Vol. 5 (3), 267--320, 2008.
\bibitem{intervalgraph}
P. Diaconis, S. Holmes, and S. Janson,  Interval graph limits, {\it Annals of Combinatorics}, Vol. 17 (1), 27--52, 2013.
\bibitem{diaconis_ram}
P. Diaconis and A. Ram, Analysis of Systematic Scan Metropolis Algorithms Using Iwahori-Hecke Algebra Techniques, {\it Michigan Journal of Mathematics,} Vol. 48(1), 157--190, 2000.
\bibitem{even}
S. Even, A. Pnueli, and A. Lempel, Permutation graphs and transitive graphs, {\it Journal of
ACM}, Vol. 19 (3), 400--410, 1972.
\bibitem{analytic_combinatorics}
P. Flajolet, R. Sedgewick, {\it Analytic Combinatorics}, Cambridge University Press, 2009.
\bibitem{feigin_cohen}
P. Feigin, A. Cohen, On a model for concordance between judges, {\it Journal of Royal Statistical Society B}, Vol. 40, 203--213, 1978.
\bibitem{fulman_stein}
J. Fulman, Stein's method and non-reversible Markov chains,  {\it Stein's Method: Expository Lectures and Applications}, Institute of Mathematical Statistics, Hayward, 66-77, 2004.
\bibitem{permutons_graphons}
R. Glebov, A. Grzesik, T. Klimo\v sov\' a,  and D. Kr\'al, Finitely forcible graphons and permutons,  arXiv:1307.2444,  {\it Journal of Combinatorial Theory, Series A}, to appear, 2014.
\bibitem{permutation_graph_testing}
R. Glebov, C. Hoppen, T. Klimo\v sov\' a, Y. Kohayakawa,  D. Kr\'al,  and H. Liu, Large permutations and parameter testing, {\it arXiv:1412.5622}, 2014.
\bibitem{golumbic}
M. C. Golumbic, {\it Algorithmic Graph Theory and Perfect Graphs}, Computer Science and Applied Mathematics, Academic Press, 1980.
\bibitem{permutation_testing}
C. Hoppen, Y. Kohayakawa, C.G. Moreira, and R.M. Sampaio: Testing permutation properties through subpermutations, {\it Theoretical Computer Science}, Vol. 412, 3555--3567, 2011.
\bibitem{permutation_limits}
C. Hoppen, Y. Kohayakawa, C.G. Moreira, B. R\'ath, and R.M. Sampaio, Limits of permutation sequences, {\it Journal of Combinatorial Theory, Series B}, Vol. 103, 93--113, 2013.
\bibitem{janson_multiple_pattern}
S. Janson, B. Nakamura and D. Zeilberger, On the asymptotic statistics of the number of occurrences of multiple permutation patterns, {\it arXiv:1312.3955}, 2013.
\bibitem{kral_pikhurko}
D. K\'ral and O. Pikhurko, Quasirandom permutations are characterized by 4-point densities, {\it Geometric and Functional Analysis}, Vol. 23, 570--579, 2013.
\bibitem{lahiri_chatterjee}
S. N. Lahiri and A. Chatterjee, A Berry-Esseen theorem for hypergeometric probabilities under minimal conditions, {\it Proceedings of the American Mathematical Society}, Vol. 135 (5), 1535--1545, 2007. 
\bibitem{lovasz_book}
L. Lov\'asz, {\it Large networks and graph limits}, AMS, Providence, RI, 2012.
\bibitem{lineartime}
R. M. McConnell and J. P. Spinrad, Modular decomposition and transitive orientation, {\it Discrete Mathematics}, Vol. 201 (1-3), 189--241, 1999.
\bibitem{clmallows}
C. L. Mallows, Non-null ranking models. I., {\it Biometrika}, Vol. 44, 114--130, 1957.
\bibitem{mueller_starr}
C. Mueller, S. Starr, The length of the longest increasing subsequence of a random Mallows permutation, {\it J. Theoret. Probab.},  Vol. 26 (2), 514--540, 2013.
\bibitem{sm}
S. Mukherjee, Estimation of parameters in non uniform models on permutations, {\it arXiv:1307.0978}, 2013.
\bibitem{newman1}
M. E. J. Newman, The structure and function of complex networks, {\it SIAM Rev.} Vol. 45, 167--256, 2003. 
\bibitem{newman2}
M. E. J. Newman, A.-L. Barabasi, D. J. Watts,  (eds.) {\it The Structure and Dynamics of Networks}, Princeton Studies in Complexity, Princeton Univ. Press, Princeton, NJ, 2006.
\bibitem{even_identify}
A. Pnueli, A. Lempel, and S. Even, Transitive orientation of graphs and identification of
permutation graphs, {\it Canadian Journal of Mathematics}, Vol. 23 (1), 160--175, 1971.
\bibitem{scheinerman} 
E. R. Scheinerman, Random interval graphs, {\it Combinatorica}, Vol. 8 (4), 357--371, 1988.
\bibitem{hyper_conc}
M. Skala, Hypergeometric tail inequalities: ending the insanity,  {\it arXiv:1311.5939}, 2013 (\url{http://arxiv.org/abs/1311.5939}).
\bibitem{starr}
S. Starr, Thermodynamic Limit for the Mallows Model on $S_n$, {\it J. Math. Phys.}, Vol. 50, 095208, 2009.
\bibitem{sweeting}T. J. Sweeting, On conditional weak convergence, {\it Journal of Theoretical Probability}, Vol. 2 (4), 461--474, 1989.
\end{thebibliography}
\end{document}